\def\cal{\mathcal}
\newcommand{\bbD}{\mathbb{D}}
\newcommand{\bbH}{\mathbb{H}}
\newcommand{\bbN}{\mathbb{N}}
\newcommand{\bbR}{\mathbb{R}}
\newcommand{\bbS}{\mathbb{S}}
\newcommand{\bbZ}{\mathbb{Z}}
\def\cA{{\cal A}}  \def\cG{{\cal G}}  \def\cS{{\cal S}}
  \def\cH{{\cal H}}  \def\cT{{\cal T}}
\def\cC{{\cal C}}   \def\cO{{\cal O}} 
\def\cD{{\cal D}}    
\def\cF{{\cal F}}
\renewcommand{\phi}{\varphi}
\renewcommand{\epsilon}{\varepsilon}
\def\inte{\mathrm{Int}}
\def\homeo{\mathrm{Homeo}}
\newtheorem*{ques*}{Question}
\newtheorem*{prop*}{Proposition}
\newtheorem*{conj*}{Conjecture}
\newtheorem*{theo*}{Theorem}
\newtheorem{coro}{Corollary}[section]
\newtheorem{prop}[coro]{Proposition}
\newtheorem{lemm}[coro]{Lemma}
\newtheorem*{lemm*}{Lemma}
\newtheorem*{coro*}{Corollary}
\def\?{\footnote{?}}
\newcommand{\footnotefred}[1]{}
\newcounter{prblm}
\renewcommand{\theprblm}{\arabic{prblm}}
\newlength{\espaceavantenonce}
\newlength{\espaceapresenonce}
\newenvironment{enonce3*}[1]{
\vskip\espaceavantenonce \noindent \textbf{\textit{#1.---}} }%
{\vskip \espaceapresenonce}
\newcounter{numexo}
\newcounter{numsubexo}
\newcounter{numsubsubexo}
\newcommand{\subexo}{\stepcounter{numsubexo}
	\setcounter{numsubsubexo}{0}
	\noindent\textbf{\arabic{numsubexo}. }}
\newenvironment{exercise}{\bigskip \hrule \smallskip
\setcounter{numsubexo}{0}	\setcounter{numsubsubexo}{0}
\footnotesize
\noindent\stepcounter{numexo}
\textbf{Exercise \arabic{numexo}.---}}{\normalsize \smallskip \hrule \bigskip}
\begin{document}

\author{Fr\'ed\'eric Le Roux}
 \title{An introduction to Handel's homotopy Brouwer theory}
\maketitle


\begin{abstract}
Homotopy Brouwer theory is a tool to study the dynamics of surface homeomorphisms.
We introduce and illustrate the main objects of homotopy Brouwer theory, and provide a proof of Handel's fixed point theorem. These are the notes of a mini-course held during the workshop ``Superficies en Montevideo'' in March 2012.
\end{abstract}

\tableofcontents

\newpage
\section*{Introduction}
\addcontentsline{toc}{section}{Introduction}

These notes may be seen as a walk around Handel's proof of the following theorem.

\begin{theo*}[Handel's fixed point theorem,\cite{handel1999fixed}]
Consider a homeomorphism $f:\bbD^2\to \bbD^2$ of the closed $2$-disk. Assume the following hypotheses.
\begin{enumerate}
\item[$(H_{1})$] There exists $r\geq 3$ points $x_{1}, \dots ,x_{r} $ in the interior of $\bbD^2$ and $2r$ pairwise distinct points $\alpha_{1}, \omega_{1}, \dots , \alpha_{r}, \omega_{r}$ on the boundary $\partial \bbD^2$ such that, for every $i=1, \dots, r$,
$$
\lim_{n \to -\infty} f^n(x_{i}) = \alpha_{i}, \ \ \  \lim_{n \to +\infty} f^n(x_{i}) = \omega_{i}.
$$
\item[$(H_{2})$] The cyclic order on $\partial \bbD^2$ is as represented on the picture below :
$$
\alpha_{1}, \omega_{r} , \alpha_{2} , \omega_{1}, \alpha_{3}, \omega_{2}, \dots , \alpha_{r}, \omega_{r-1}, \alpha_{1}.
$$
\end{enumerate}
Then $f$ has a fixed point in the interior of $\bbD^2$.
\end{theo*}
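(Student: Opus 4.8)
The plan is to argue by contradiction: assume $f$ has no fixed point in $\mathrm{int}\,\bbD^2$, and contradict the structure theory of homotopy Brouwer theory developed in these notes. First I would reduce to a Brouwer homeomorphism. As $f$ is continuous on the compact disk and $f^n(x_i)\to\omega_i$, applying $f$ gives $f^{n+1}(x_i)\to f(\omega_i)$; but $(f^{n+1}(x_i))_n$ is the same convergent sequence shifted by one index, so its limit is again $\omega_i$, whence $f(\omega_i)=\omega_i$, and likewise $f(\alpha_i)=\alpha_i$. Thus the $2r$ points $\alpha_i,\omega_i$ are fixed by $f$ on $\partial\bbD^2$. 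Moreover $f$ preserves orientation: by $(H_2)$ the points $\alpha_1$ and $\omega_r$ are consecutive on $\partial\bbD^2$, hence divide it into one arc containing none of the remaining $2r-2$ fixed points and one arc containing all of them, and an orientation-reversing homeomorphism fixing $\alpha_1$ and $\omega_r$ would have to exchange these two arcs, which is impossible. The orbits $O_i=\{f^n(x_i):n\in\bbZ\}$ are pairwise disjoint, since $O_i=O_j$ with $i\neq j$ would give $\alpha_i=\alpha_j$, contradicting $(H_1)$. Therefore $\varphi:=f|_{\mathrm{int}\,\bbD^2}$ is a fixed-point-free, orientation-preserving homeomorphism of the plane --- a Brouwer homeomorphism --- with circle at infinity $S^1_\infty=\partial\bbD^2$, and $\cO:=O_1\cup\dots\cup O_r$ is a $\varphi$-invariant family of $r$ orbits whose ends on $S^1_\infty$ are the points $\alpha_i$ (at $-\infty$) and $\omega_i$ (at $+\infty$), in the cyclic order prescribed by $(H_2)$.

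Next I would examine the $\cO$-isotopy class of $\varphi$ and show that, since $r\geq 3$, it is irreducible. Up to $\cO$-isotopy a reducing line is a properly embedded line disjoint from $\cO$ whose two ends on $S^1_\infty$ split the $2r$ points $\alpha_1,\omega_1,\dots,\alpha_r,\omega_r$ into two arcs, each containing, for every $i$, both or neither of the pair $\{\alpha_i,\omega_i\}$. But in $(H_2)$ the two points $\alpha_i$ and $\omega_i$ are separated, on one side, by exactly the two points $\omega_{i-1}$ and $\alpha_{i+1}$ (indices mod $r$). So an arc containing both $\alpha_i$ and $\omega_i$ either contains $\{\omega_{i-1},\alpha_{i+1}\}$ --- in which case the constraints for the pairs $i-1$ and $i+1$ force it to grow, one pair at a time, until it is the whole circle --- or else it has complement $\{\omega_{i-1},\alpha_{i+1}\}$, which splits the pair of index $i-1$ as soon as $r\geq 3$. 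Hence there is no reducing line; properness of the orbits similarly rules out a reduction by a Jordan curve. So the $\cO$-isotopy class of $\varphi$ is irreducible.

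Finally I would invoke the structural results on irreducible $\cO$-isotopy classes of Brouwer homeomorphisms with $r\geq 3$ orbits (Handel's normal form / classification). These put $\varphi$, up to $\cO$-isotopy, into a rigid model in which the cyclic positions on $S^1_\infty$ of the $\alpha$-ends and the $\omega$-ends are strongly constrained --- in effect, the ``rotation'' of the $\omega$-ends relative to the $\alpha$-ends must vanish, this being precisely what lets the class be realised by a map that is, up to homotopy, a translation. The configuration $(H_2)$ realises instead the nontrivial cyclic shift in which $\omega_i$ sits one slot beyond $\alpha_{i+1}$ rather than next to $\alpha_i$. Quantitatively, inside the normal form one can assemble, from homotopy translation arcs joining $O_i$ to $O_{i+1}$ for $i=1,\dots,r$, an open topological disk $D$ with $\varphi(D)\cap D=\emptyset$ but $\varphi^k(D)\cap D\neq\emptyset$ for some $k\geq 2$, contradicting the Brouwer plane translation theorem. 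This contradiction shows $f$ has a fixed point in $\mathrm{int}\,\bbD^2$.

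\textbf{Main obstacle.} The real work lies in the last step: extracting the required rigidity from the classification of irreducible $\cO$-isotopy classes --- a Nielsen--Thurston-type normal form for Brouwer homeomorphisms relative to finitely many orbits --- and turning the ``shift'' of $(H_2)$ into a genuine returning disk. The reduction is elementary once the theory is in place, and the verification of irreducibility is a short case analysis; the only nuisances there are reducing lines whose ends oscillate on $S^1_\infty$ instead of converging (removed by straightening up to isotopy) and, if one prefers not to assert irreducibility directly, an induction on $r$ that peels off reducing lines one at a time --- which terminates because the same case analysis shows that no proper sub-collection of the orbits inherits a shifted configuration.
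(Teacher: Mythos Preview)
Your first paragraph is fine: the reduction to a Brouwer homeomorphism on the open disk, the verification that $f$ fixes the $\alpha_i,\omega_i$ and preserves orientation, and the disjointness of the orbits are all correct and match the paper's setup (this is the easy half of Proposition~\ref{prop.handel-matsumoto}).

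The genuine gap is your third paragraph. The ``structural results on irreducible $\cO$-isotopy classes'' and the ``Nielsen--Thurston-type normal form for Brouwer homeomorphisms'' that you want to invoke do not exist as a black box. Handel's 1999 paper is precisely where homotopy Brouwer theory is first developed, and the fixed point theorem is one of its main \emph{outputs}, not a corollary of a pre-existing classification. Indeed, Section~\ref{sec.orbit-diagrams} of these notes discusses orbit diagrams only as a \emph{conjectural} invariant and explicitly leaves open whether the reduced diagram determines the isotopy class. Your assertion that in the normal form ``the rotation of the $\omega$-ends relative to the $\alpha$-ends must vanish'' is exactly the content the theorem is supposed to establish, so invoking it is circular. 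Your proposed endgame --- assembling from ``homotopy translation arcs joining $O_i$ to $O_{i+1}$'' a disk $D$ that is free yet has $\varphi^k(D)\cap D\neq\emptyset$ --- is not a construction either: homotopy translation arcs join a point to its image on the \emph{same} orbit, and you give no mechanism for producing an honest topological disk with those properties. (Your irreducibility discussion is also shakier than it looks: a reducing line is one with $f(L)$ isotopic to $L$ rel $\cO$, and it is not automatic that such a line has well-defined endpoints on $\partial\bbD^2$, nor that the endpoint condition you write down is equivalent to invariance. But this is secondary, since the paper's argument never uses irreducibility.)

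What the paper actually does is quite different. After deriving property $(H'_1)$ from $(H_1)$ (Proposition~\ref{prop.handel-matsumoto}), it defines a ``$\mathrm{cut}$'' operation on homotopy classes of arcs relative to $\cO$ and proves (Proposition~\ref{prop.fixed-point}) that if some arc $\alpha$ satisfies $-\underline\alpha\in\mathrm{cut}\,\underline f^{\,n}(\underline\alpha)$ for some $n>0$, then $f$ has a fixed point; this step passes to the hyperbolic universal cover of $\bbR^2\setminus\cO$ and applies the classical Brouwer fixed point theorem on its circle compactification. The cyclic-order hypothesis $(H'_2)$ is then used to build a finite \emph{fitted family} $T$ of arc classes stable under $\mathrm{cut}\circ\underline f$ (Lemma~\ref{lemm.existence-fitted}), and a combinatorial argument on the directed graph of $T$ produces the required $\alpha$. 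No classification or normal form is needed.
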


\smallskip 

\begin{center}
\fbox{
\begin{tikzpicture}[scale=1]
\tikzstyle{fleche}=[>=latex,->, thick, dotted]
\draw (0,0) circle (2);
\foreach \r in {3}
{
\foreach \i in {1,...,\r}
{\draw [fleche] (\i*360/\r:2) -- (\i*360/\r+480/\r:2) ;
\draw (\i*360/\r:2.4) node {$\alpha_{\i}$};
\draw  (\i*360/\r+480/\r:2.4) node {$\omega_{\i}$};}
}
\end{tikzpicture} 
\hspace{1cm}
\begin{tikzpicture}[scale=1]
\tikzstyle{fleche}=[>=latex,->, thick, dotted]
\draw (0,0) circle (2);
\foreach \r in {4}
{
\foreach \i in {1,...,\r}
{\draw [fleche] (\i*360/\r:2) -- (\i*360/\r+480/\r:2) ;
\draw (\i*360/\r:2.4) node {$\alpha_{\i}$};
\draw  (\i*360/\r+480/\r:2.4) node {$\omega_{\i}$};}
}
\end{tikzpicture} 
}

Orbits diagram for Handel's fixed point theorem: $r=3$, $r=4$
\end{center}

In Handel's original paper more general cyclic orders are allowed, but Handel's hypothesis implies the existence of a subset of the $x_{i}$'s satisfying the above hypothesis $(H_{2})$ (see the nice combinatorial argument in the introduction of the paper~\cite{MR2284059} by P. Le Calvez). Thus the original statement can be  deduced from this one. 

This theorem is a tool for detecting fixed points for homeomorphisms on surfaces, when applied to the following construction. Let $S$ be a surface without boundary, endowed with a hyperbolic metric (think of a compact surface of genus $\geq 2$, or an open subset of the sphere which is not homeomorphic to a disk 	or an annulus). Consider a homeomorphism $f:S \to S$ which is isotopic to the identity. The universal cover of $S$ is the hyperbolic disk $\bbH^2$. Lifting the isotopy, we get a homeomorphism $\tilde f: \bbH^2 \to \bbH^2$ which is a lift of $f$. One can prove that $\tilde f$ extend to a homeomorphism of the closed disk which point-wise fixes the circle boundary $\partial \bbH^2$. In this setting, every fixed point of $\tilde f$ in the open disk $\bbH^2$ projects to a fixed point of $f$.

Here is an application, due to Betsvina and Handel. In the previous construction assume $S$ is the complement of at least three points in the sphere. If $f$ has a periodic point $z$, then the trajectory of $z$ under the iterated isotopy is a closed curve $\Gamma$. If this curve is homotopic to a constant, then $z$ lifts to a periodic point of $\tilde f$, and Brouwer plane translation theorem (see below) provides a fixed point for $\tilde f$, and thus for $f$. Handel's theorem allows to get a fixed point under the weaker hypothesis that the curve is homologous to zero.
Indeed, under this hypothesis, consider the unique oriented hyperbolic geodesic $\Gamma_{0}$ of $S$ which is freely homotopic to $\Gamma$. Since $\Gamma_{0}$ is homologous to zero, its algebraic intersection number with every closed curve, and every curve joining two connected components of the complement of $S$,  is zero. Given a point $p_{0}$ in the complement of $S$, a function may be defined on the complement $U$ of $\Gamma_{0}$, assigning to a point $p$ the intersection number of $\Gamma_{0}$ with any curve from $p_{0}$ to $p$. This function is constant on each connected component of $U$, and vanishes on the complement of $S$. The maximum and the minimum of the function cannot both be zero, to fix ideas let us assume the maximum is non zero.
Consider a connected component $U_{0}$ where the function is maximal ; thus $U_{0}$ is included in $S$.
Because the function is maximal on $U_{0}$, the boundary of $U_{0}$ is made of segments of $\Gamma_{0}$ which are oriented in a coherent way. Lifting the picture to the hyperbolic plane, we find several lifts of $\Gamma_{0}$ which draw a diagram as on the above picture. To each of these lifts correspond a lift $\tilde \Gamma_{i}$ of $\Gamma$; if $\tilde z_{i}$ is a lift of $z$ on $\tilde \Gamma_{i}$, the orbits of the $\tilde z_{i}$'s satisfies the hypothesis of Handel's theorem. Thus again we get a fixed point for $f$.
For more details, and some more applications, see again the introduction of Patrice Le Calvez's paper. 

\bigskip	

One can restate the theorem by saying that when a homeomorphism of the two-disk has no fixed point in the interior, there is no family of orbits satisfying hypotheses $(H_{1})$ and $(H_{2})$. Under this viewpoint, the theorem says that orbits of a fixed point free homeomorphism of the open disk may not ``cross each others too much''. The reader may keep this idea in mind as a guideline for these notes. 

\bigskip	

We begin by recalling the classical Brouwer theory, concerning fixed point free homeomorphisms of the plane. Then we introduce and illustrate the \emph{homotopy translation arcs} which are the main objects of Handel's proof. These objects also play a central part in further developments of the theory by J. Franks and M. Handel, as in\cite{franks2003periodic} or \cite{franks2010periodic}. Finally we give the proof of the theorem.

 Handel's proof is mainly intrinsic to the interior of the disk (identified with a plane), with no reference to the boundary, and the above disk theorem follows from a plane theorem. For this intrinsic statement we follow the short exposition by S. Matsumoto (\cite{matsumoto2000arnold}). 
 A small novelty is a direct proof, using classical Brouwer theory, of the lemma which allows to deduce Handel's theorem from the intrinsic plane version (proposition~\ref{prop.translation-arcs} below). We also discuss \emph{orbit diagrams}, and propose some conjectures describing an invariant of combinatorial type associated to a finite family of orbits for a fixed point free homeomorphism of the plane, which would entail that there exists only finitely many distinct ``braid types'' for a given number of orbits.

\bigskip

Since space does not allow a proper introduction to hyperbolic geometry, we tried to avoid it as much as possible, especially in the definitions of the main objects, and tried to emphasize their purely topological aspects. 
The reader which is not familiar with this subject may read (and admit) the properties concerning hyperbolic geodesics that are listed in appendix~1 as a set of axioms. Geodesics will become essential in section 3.

\bigskip
\small
\paragraph{Acknowledgements}
I thank the organizers of the workshop, and especially Mart\'in Sambarino and Juliana Xavier, for having invited me to give these lectures. Many thanks also to Lucien Guillou and Emmanuel Militon for their careful readings of a preliminary version of these notes. Finally, I want to thank again Lucien Guillou for having introduced me to this subject, many years ago...

\normalsize

\bigskip
\bigskip
\bigskip

\section{(Classical) Brouwer theory}
\label{sec.brouwer-theory}
Handel's theorem deals with some fixed point free homeomorphisms of the open disk. By identifying the open disk with the plane, we get fixed point free homeomorphisms of the plane, which are the objects of Brouwer theory.

\subsection{Flows}
Let us recall a little bit of Poincar\'e-Bendixson theory. Let $X$ be a \textbf{non vanishing} vector field on the plane, and assume $X$ is smooth and complete, so that Cauchy-Lipschitz theorem gives rise to a flow, that is, there is a one parameter family $(\Phi^t)_{t \in \bbR}$ of diffeomorphisms of the plane tangent to the vector field: the ordinary differential equation  
$$
\frac{\partial }{\partial t} \Phi^t(x) = X(\Phi^t(x))
$$
 is satisfied. Take a smooth curve $\gamma: ]-\varepsilon, \varepsilon[ \to \bbR^2$ which is \emph{transverse} to the vector field: $\gamma'(t)$ is nowhere colinear to $X(\gamma(t))$. Then the main remark of the Poincar\'e-Bendixson theory is that \emph{no integral curve $t \mapsto \Phi^t(x)$ can meet $\gamma$ twice}. As a consequence, the map $\Psi : \bbR \times ]-\varepsilon, \varepsilon[ \to \bbR^2$ given by
$$
(x,y) \mapsto \Phi^x(\gamma(y))
$$
is one to one, and the image of $\Psi$ is an open invariant set on which the flow is conjugate to the horizontal translation flow (an \emph{invariant} flow box),
$$
\Psi((x,y) + (t,0)) = \Phi^t(\Psi(x,y)).
$$
Since no integral curve meets $\gamma$ twice, the point $\gamma(0)$ is not an accumulation point of any orbit. Thus integral curves have no accumulation point,  they tend to infinity: for every compact subset $K$ of the plane, and every $x$, the set 
$$
\{t\in \bbR, \Phi^t(x) \in K\}
$$
 is compact. We say that the map $t \mapsto \Phi^t(x)$ is \emph{proper} (or that the integral curve is a \emph{properly embedded line}).

Consider now several points $x_{1}, \dots , x_{r}$ that belongs to distinct (and thus pairwise disjoint) orbits $\Gamma_{1}, \dots \Gamma_{r}$ of the flow. We endow these orbits with the orientation induced by the flow.
The topology of any finite family of pairwise disjoint oriented properly embedded lines in the plane may be completely described by a finite invariant.
More precisely, according to the Schoenflies theorem, we can find a homeomorphism $h:\bbR^2 \to \inte(\bbD^2)$ between the plane and the open unit disk under which the image of each oriented curve $\Gamma_{i}$ becomes a chord $[\alpha_{i},\omega_{i}]$ of the unit circle. The cyclic order on the set $\{\alpha_{1}, \omega_{1}, \dots , \alpha_{r}, \omega_{r}\}$ is a total invariant of the topology of the curves, meaning that there exists a homeomorphism sending a first family of oriented curves on a second family if and only if their cyclic orders at infinity coincide. The only constraint on this cyclic order is that the chords $[\alpha_{i},\omega_{i}]$ are pairwise disjoint. For example, for two orbits there is only two possible diagrams (up to reversing the cyclic order), and five diagrams for three orbits.

\begin{center} \fbox{
\begin{tikzpicture}[scale=1]
\tikzstyle{fleche}=[>=latex,->, thick, dotted]
\draw (0,0) circle (1);
\foreach \a / \w in {150/30,-150/-30}
{
\draw [fleche] (\a:1) -- (\w:1) ;
}
\end{tikzpicture} 
\begin{tikzpicture}[scale=1]
\tikzstyle{fleche}=[>=latex,->, thick, dotted]
\draw (0,0) circle (1);
\foreach \a / \w in {150/30,-30/-150}
{
\draw [fleche] (\a:1) -- (\w:1) ;
}
\end{tikzpicture} 
}
\end{center}
\begin{center} \fbox{
\begin{tikzpicture}[scale=1]
\tikzstyle{fleche}=[>=latex,->, thick, dotted]
\draw (0,0) circle (1);
\foreach \a / \w in {150/30,180/0,-150/-30}
{
\draw [fleche] (\a:1) -- (\w:1) ;
}
\end{tikzpicture} 
\begin{tikzpicture}[scale=1]
\tikzstyle{fleche}=[>=latex,->, thick, dotted]
\draw (0,0) circle (1);
\foreach \a / \w in {150/30,180/0,-30/-150}
{
\draw [fleche] (\a:1) -- (\w:1) ;
}
\end{tikzpicture} 
\begin{tikzpicture}[scale=1]
\tikzstyle{fleche}=[>=latex,->, thick, dotted]
\draw (0,0) circle (1);
\foreach \a / \w in {150/30,0/180,-150/-30}
{
\draw [fleche] (\a:1) -- (\w:1) ;
}
\end{tikzpicture} 
\begin{tikzpicture}[scale=1]
\tikzstyle{fleche}=[>=latex,->, thick, dotted]
\draw (0,0) circle (1);
\foreach \a / \w in {-50/50,70/170,190/-70}
{
\draw [fleche] (\a:1) -- (\w:1) ;
}
\end{tikzpicture} 
\begin{tikzpicture}[scale=1]
\tikzstyle{fleche}=[>=latex,->, thick, dotted]
\draw (0,0) circle (1);
\foreach \a / \w in {50/-50,70/170,190/-70}
{
\draw [fleche] (\a:1) -- (\w:1) ;
}
\end{tikzpicture}
}\\

Two or three orbits of a non vanishing vector field in the plane

\end{center}

\subsection{Brouwer homeomorphisms}\label{sub.Brouwer}

A \emph{Brouwer homeomorphism} $f$ is a fixed point free, orientation preserving homeomorphism of the plane. As a consequence of Poincar\'e-Bendixson theorem, the time one map of a flow generated by a non vanishing vector field is (a special case of) a Brouwer homeomorphism, and one could say that the main purpose of Brouwer theory is to determine which properties of the planar flows generalize to general Brouwer homeomorphism. In particular, we would like to find out if there is something like a cyclic order on ends of orbits.

\bigskip

Let us first recall Brouwer \emph{plane translation theorem}, which is the analog of Poincar\'e-Bendixson theorem.
An open set $U\subset \bbR^2$ is called a \emph{translation domain} for $f$ if $U$ is the image of an embedding\footnote{A map $\Psi:X \to Y$ is called an \emph{embedding} if it is a homeomorphism between $X$ and $\Psi(X)$.} $\Psi:\bbR^2 \to \Psi(\bbR^2) = U$ such that $\Psi \circ T = f \circ \Psi$, where $T:(x,y) \mapsto (x,y)+(1,0)$ is the horizontal translation. Note that a translation domain is $f$-invariant ($f(U) = U$). Here is a weak version of  the Brouwer Plane translation theorem.
\begin{theo*}[Brouwer]
Every point of the plane belongs to a translation domain.\footnote{In the full statement, $\Psi$ my be chosen so that its restriction to every vertical straight line is proper. In what follows we will only use the weak version.}
\end{theo*}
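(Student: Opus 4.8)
The plan is to follow the classical route through \emph{translation arcs}. Here a translation arc for $f$ at a point $z$ is an embedded arc $\gamma$ from $z$ to $f(z)$ with $\gamma\cap f(\gamma)=\{f(z)\}$. The proof then splits into three parts: (i) the given point $x$ lies on some translation arc; (ii) \emph{Brouwer's translation arc lemma}, namely that the iterates of a translation arc are pairwise disjoint except that consecutive ones meet at a single endpoint, so that their union is a properly embedded line along which $f$ acts like a unit translation; and (iii) this line can be thickened, $f$-equivariantly, into the desired translation domain. Part (i) is quick: as $f$ is fixed point free, $x\neq f(x)$, and starting from an arbitrary embedded arc from $x$ to $f(x)$ and truncating it at its first meeting with its $f$-image while re-routing along $f(\gamma)$ --- an elementary if slightly fussy manipulation in the plane --- one obtains an arc $\gamma$ with $\gamma\cap f(\gamma)=\{f(x)\}$.

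The heart of the matter is part (ii), and this is where I expect the real difficulty. One argues by contradiction: suppose $f^n(\gamma)$ meets $\gamma$ for some $n\geq2$, and take $n$ minimal. Then $C:=\gamma\cup f(\gamma)\cup\dots\cup f^{n-1}(\gamma)$ is an embedded arc from $z$ to $f^n(z)$ --- consecutive pieces meet only at the points $f^i(z)$, and non-consecutive ones are disjoint by minimality of $n$ --- while the remaining piece $f^n(\gamma)$ issues from the endpoint $f^n(z)$ of $C$ and hits $C$ again somewhere. Cutting $C$ and $f^n(\gamma)$ along appropriate first-intersection points carves out a Jordan curve $J$; using that $f$ preserves orientation, one shows that $f$ or $f^{-1}$ carries the closed topological disk bounded by $J$ into itself, and then the Brouwer fixed point theorem supplies a fixed point there, contradicting fixed-point-freeness. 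Isolating the correct Jordan curve and running this orientation argument is the single genuinely non-formal step of the whole proof.

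Granting the lemma, the arcs $f^n(\gamma)$, $n\in\bbZ$, fit together into a properly embedded line $\ell$ --- that these pieces do not accumulate on a compact set follows from a short compactness argument (since $f$ is fixed point free, a small enough disk is disjoint from its image) --- and, after reparametrising each piece, $f$ restricted to $\ell$ becomes conjugate to $t\mapsto t+1$ on $\bbR$. For part (iii), $\ell$ separates the plane into two $f$-invariant half-planes, and it remains to build an equivariant ribbon around it. I would construct a properly embedded ``transversal'' line $\lambda$ meeting $\ell$ only at $x$ whose iterates $f^n(\lambda)$ are pairwise disjoint --- in essence a second application of the translation-arc mechanism, now to a transverse arc at $x$ --- take the closed region between $\lambda$ and $f(\lambda)$ as a fundamental domain, identify it with $[0,1]\times\bbR$ by the Schoenflies theorem so that $x\mapsto(0,0)$ and $f(x)\mapsto(1,0)$, and glue the $f$-translates of this model to obtain an embedding $\Psi:\bbR^2\to U$ with $\Psi\circ T=f\circ\Psi$, where $U=\bigcup_{n\in\bbZ}f^n(\text{fundamental domain})$ is checked to be open and to contain $x$. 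Compared with part (ii), this step is fiddly but introduces no new idea.
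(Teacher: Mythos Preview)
Your plan has a genuine gap in part~(iii): the union $\ell=\bigcup_{n\in\bbZ}f^n(\gamma)$ of the iterates of a translation arc is \emph{not} in general a properly embedded line. This is exactly one of the pathologies the paper flags immediately after stating the theorem: there exist Brouwer homeomorphisms for which \emph{no} streamline is properly embedded. Your ``short compactness argument'' cannot rescue this. Knowing that a small disk about a point is disjoint from its image --- or even, granting the free disk lemma, from all its iterates --- shows that the \emph{orbit} $\{f^n(x)\}$ does not accumulate; it does not prevent the \emph{arcs} $f^n(\gamma)$ from accumulating somewhere in the plane. Once $\ell$ fails to be closed, it no longer separates the plane into two half-planes, your transversal $\lambda$ cannot be built as described, and the whole of part~(iii) collapses.

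The paper's route (sketched as an exercise after the construction of translation arcs) sidesteps this trap entirely. It takes the free disk lemma as the basic input --- quoted from the literature rather than reproved --- derives the translation-arc lemma as a two-line corollary, and then thickens the translation arc not to a \emph{line} but to a critical topological disk $\delta$ with $\delta\cap f(\delta)$ a single arc. The iterates $f^n(\delta)$ then assemble directly into a translation domain: one builds $\Psi$ strip by strip, and no properness of any embedded line is ever invoked. This is both shorter and, crucially, correct where your version is not.

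A secondary remark on part~(ii): the mechanism you name --- ``$f$ or $f^{-1}$ carries the closed disk bounded by $J$ into itself'' --- is not what actually happens; no such inclusion holds in general. The genuine proofs of the translation-arc lemma (equivalently, of the free disk lemma) go through an index computation or a more delicate disk-chain argument, not a one-step Brouwer fixed point. You are right that this is the hard step, but the specific idea you propose for it does not work.
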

As a corollary, exactly as in the Poincar\'e-Bendixson theory, every orbit $(f^n(x))_{n \in \bbZ}$ goes to infinity: for every compact subset $K$ of the plane, and every $x$, the set 
$$
\{n\in \bbZ, f^n(x) \in K\}
$$
 is compact. Again we will say that the orbit is proper (or \emph{locally finite}).

Under the conclusions of the theorem, let  $\Gamma$ be the image under the map $\Psi$ of any horizontal line. 
Then $\Gamma$ is an injective continuous image of the real line which is invariant under $f$, \emph{i. e.} $f(\Gamma) = \Gamma$.
Such a curve is called a \emph{streamline} for $f$, and it is an analog of the integral curves for flows. However, we must note that
\begin{itemize}
\item (non uniqueness) every point belongs to (infinitely many) distinct streamlines;
\item (non properness) although they are continuous injective images of $\bbR$, some streamlines are not properly embedded (equivalently, their images are not closed subset of the plane).
\end{itemize}
Actually the situation is the worst you can imagine: there exist examples with no properly embedded streamline, and there are uncountably many non homeomorphic possibilities even for a single non properly embedded streamline. A key point in the proof of Handel's theorem will be to replace streamlines by the more flexible ``homotopy'' streamlines.

\subsection{Translation arcs}
\label{ssec.translation-arcs}
Let $f$ be a Brouwer homeomorphism.
A simple arc\footnote{A \emph{simple arc}, sometimes just called an arc, is an injective  continuous image of $[0,1]$.}   $\alpha$ satisfying 
\begin{enumerate}
\item $\alpha(1) = f(\alpha(0))$,
\item $\alpha \cap f(\alpha) = \{\alpha(1)\}$
\end{enumerate}
is called a \emph{translation arc} for the point $\alpha(0)$.

\begin{center}
\fbox{
\begin{tikzpicture}[scale=1]
\draw  (0,0) node {$\bullet$} ..controls +(0.5,0.5) and +(-0.5,-0.5)  .. (1,0)
		node {$\bullet$} 
		..controls +(0.5,-0.5) and +(-0.5,-0.5)  .. (2,0)
		node {$\bullet$} ;
\draw (0.5,0.5) node {$\alpha$} ;
\draw (1.5,-0.8) node {$f(\alpha)$} ;

\end{tikzpicture}~ 
}
\\
A translation arc
\end{center}

The following is a fundamental lemma of the theory. For a proof see for example~\cite{barge1993recurrent,guillou1994Brouwer}.
\begin{lemm}(``Free disk lemma'')
Let $D\subset \bbR^2$ be a topological disk, \emph{i. e.} a set homeomorphic either to the open or to the closed unit disk. Assume that $D$ is \emph{free}, that is, $f(D) \cap D = \emptyset$. Then $f^n(D) \cap D = \emptyset$ for every $n \neq 0$. 
\end{lemm}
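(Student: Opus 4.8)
The plan is to deduce the free disk lemma from the plane translation theorem (and more precisely, from the properness of orbits that follows from it), by a clean connectedness argument. Suppose for contradiction that $D$ is free but $f^n(D) \cap D \neq \emptyset$ for some $n \neq 0$; without loss of generality $n \geq 2$, since $n$ and $-n$ give symmetric situations and $n = \pm 1$ is excluded by freeness. Choose such an $n$ minimal. The key objects are the iterates $D, f(D), f^2(D), \dots, f^{n}(D)$, which I would like to arrange into a connected set whose intersection pattern with its own iterates leads to a contradiction with freeness or with properness.

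First I would treat the case where $D$ is an open disk (the closed case reduces to it, or can be handled in parallel, by passing to a slightly larger open free disk, using that freeness is an open condition on compact sets). First I would observe, using minimality of $n$, that the translates $D, f(D), \dots, f^{n-1}(D)$ are pairwise disjoint: if $f^i(D) \cap f^j(D) \neq \emptyset$ with $0 \le i < j \le n-1$, then $f^{j-i}(D) \cap D \neq \emptyset$ with $0 < j-i < n$, contradicting minimality. Next, since $f^n(D)$ meets $D$, I would form the set $E = D \cup f(D) \cup \dots \cup f^{n-1}(D)$; the hypothesis gives $f^n(D) \cap D \neq \emptyset$, i.e.\ $f(f^{n-1}(D))$ meets $D$, so the ``chain'' $D, f(D), \dots, f^{n-1}(D)$ closes up. I would then build a Jordan curve (or at least a connected compact set) by joining, inside each $D$, a point of $D \cap f^{-1}(\text{previous})$ to a point of $D \cap f^{n-1}(D)$-translate, i.e.\ concatenating arcs $\gamma_i \subset f^i(D)$ running from a point of $f^{i}(D)\cap f^{i-1}(D)$ to a point of $f^{i+1}(D)\cap f^i(D)$ (indices mod $n$, using the closing-up relation), to obtain a loop $\Gamma = \gamma_0 \cup f(\gamma_0') \cup \dots$ that is ``almost'' $f$-invariant: its iterates $f(\Gamma)$ shares an arc with $\Gamma$ in each $f^i(D)$.

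The contradiction I would aim for is with properness of orbits. The loop $\Gamma$ constructed above is a bounded set, and one shows that the orbit of a suitable point, or the union $\bigcup_k f^{kn}(\Gamma)$, stays in a bounded region: indeed $f^n$ maps the bounded set $D$ to a set meeting $D$, and iterating the chain structure, $f^{kn}(E)$ remains ``attached'' to $E$, so $\bigcup_k f^{kn}(E)$ would be a connected set all of whose pieces meet their neighbors, forcing some orbit to return infinitely often to a fixed compact neighborhood of $E$ — contradicting that every orbit goes to infinity. (Equivalently, one can invoke the standard consequence that $f$, being a Brouwer homeomorphism, cannot have a bounded connected invariant-up-to-iteration set of this type; the cleanest packaging is: a ``periodic-like'' chain of free translates cannot close up in a plane with no wandering-failure.)

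The main obstacle will be making the connectedness/boundedness argument rigorous without circularity: one must be careful that the arcs $\gamma_i$ can actually be chosen inside the (possibly wild, merely topological) disks $f^i(D)$, and that concatenating them produces a set whose forward $f^n$-iterates genuinely accumulate on a compact set. A robust way around the topological-wildness issue is to shrink: replace $D$ by a small round open disk $D' \subset D$ around a point witnessing $f^n(D') \cap D' \neq \emptyset$ when possible, or more simply, note that we only need \emph{one} point $z \in D$ with $f^n(z) \in D$, and then the arc inside $D$ from $z$ to $f^n(z)$ exists because $D$ is an open disk hence arcwise connected; the same applies to its translates $f^i(D)$. Then $\alpha$ = (arc in $D$ from $z$ to $f^n(z)$) has the property that $\alpha$ and its $f$-translates, together, form a closed curve-like object invariant under $f^n$ up to the chain shifts, and the cleanest finish is: the point $z$ has $f^{n}(z) \in D$ and, by the free disk lemma applied to the \emph{smaller} disk (or by a direct ping-pong), its whole $f^n$-orbit would have to remain in a bounded set, contradicting properness. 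I would present the argument in the open-disk case and then remark that the closed-disk case follows by enlarging $D$ slightly to a free open disk, which is possible since $f(D) \cap D = \emptyset$ with $D$ compact.
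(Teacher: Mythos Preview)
The paper does not actually prove this lemma: it states it as a fundamental fact and sends the reader to \cite{barge1993recurrent,guillou1994Brouwer}. So there is no ``paper's own proof'' to compare to. That said, your proposal has a real gap, and it is exactly the one you flag yourself without resolving.

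Your strategy is to contradict properness of orbits. But nothing in your construction produces a single point with bounded orbit. From $z\in D$ and $f^{n}(z)\in D$ you get \emph{one} return; you get no control on $f^{2n}(z)$, since $f^{n}(D)$ merely meets $D$ rather than being contained in it. The chain $E=D\cup f(D)\cup\cdots\cup f^{n-1}(D)$ and the concatenated arc $\Gamma$ are bounded, but $f(\Gamma)$ need not stay near $\Gamma$: the only relation you have is $f^{n}(D)\cap D\neq\emptyset$, which does not propagate to higher iterates. Your suggested finish, ``by the free disk lemma applied to the smaller disk (or by a direct ping-pong), its whole $f^{n}$-orbit would have to remain in a bounded set'', is circular in the first alternative and undefined in the second. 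So the passage from ``$f^{n}(D)$ meets $D$'' to ``some orbit is not proper'' is simply missing.

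There is also a structural worry: in most developments (and implicitly in this paper, see the exercise at the end of \S\ref{sec.brouwer-theory}.\ref{ssec.translation-arcs}) the free disk lemma is the input from which the weak plane translation theorem is derived, not the other way around. Even granting the translation theorem as a black box, properness of orbits alone does not imply the free disk lemma by the mechanism you describe. The standard proofs in the cited references go through either an index argument (a free arc whose $n$th iterate meets it forces a fixed point via a Lefschetz-type computation) or Franks' ``periodic disk chain implies fixed point'' lemma; both require genuinely new input beyond ``orbits go to infinity''. If you want to salvage your outline, the honest route is to prove that a periodic free disk chain forces a fixed point---but that \emph{is} the content of the lemma, and it is not a corollary of properness.
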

Note that a small enough disk centered at any point $x$ is free, thus the lemma incorporates the fact that no point is periodic.
The following corollary implies that the union of all the iterates of a translation arc is a streamline.
\begin{coro}If $\alpha$ is a translation arc then
$f^{n}(\alpha) \cap \alpha \neq \emptyset$ if and only if $n =-1,0,1$.
\end{coro}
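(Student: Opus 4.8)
The plan is to reduce the statement to the Free disk lemma applied to a suitable topological disk built from the translation arc $\alpha$. First note that the conditions $\alpha(1) = f(\alpha(0))$ and $\alpha \cap f(\alpha) = \{\alpha(1)\}$ immediately give $f^{n}(\alpha)\cap\alpha \neq\emptyset$ for $n=-1,0,1$ (for $n=-1$ apply $f^{-1}$ to the relation $\alpha\cap f(\alpha)=\{\alpha(1)\}$, which is nonempty, or simply observe $\alpha(0)\in\alpha\cap f^{-1}(\alpha)$). So the content is the converse: $f^{n}(\alpha)\cap\alpha=\emptyset$ for all $|n|\geq 2$.

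The idea is to thicken $\alpha$ to a free disk. The arc $\alpha$ together with $f(\alpha)$ forms a connected set; by hypothesis they meet only at the single point $\alpha(1)$, so $\alpha\cup f(\alpha)$ is a simple arc (from $\alpha(0)$ to $f(\alpha(1))=f^{2}(\alpha(0))$), or rather a tree-like arc — in any case it is an arc since the two sub-arcs are glued endpoint-to-endpoint. Choose a small closed topological disk $D$ that is a regular neighborhood of $\alpha$, thin enough that $D\cap f(D)$ is a small neighborhood of the common endpoint $\alpha(1)$ and nothing else. Then $D$ itself is not free, but the key trick (this is the standard argument, see Brouwer theory references) is to consider instead the disk $D' = D \cup f(D)$, a neighborhood of the arc $\alpha\cup f(\alpha)$: it is again a closed topological disk, and one checks that $f(D')\cap D'$ is contained in a neighborhood of $f(\alpha(1))$ together with a neighborhood of $\alpha(1)$. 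That is still not obviously empty. The cleaner route: shrink $D$ to a neighborhood of $\alpha\setminus\{\alpha(1)\}$, i.e. take a free disk $E$ slightly "retracted" near the endpoint so that $E$ contains $\alpha(0)$ and $\alpha([0,1-\epsilon])$ but $E\cap f(E)=\emptyset$. One verifies such $E$ exists using $\alpha\cap f(\alpha)=\{\alpha(1)\}$ and compactness. By the Free disk lemma, $f^{n}(E)\cap E=\emptyset$ for all $n\neq 0$, and since $\alpha$ is covered by $E\cup f^{-1}(E')$ for a companion disk near the other end, one deduces the disjointness of the far iterates of $\alpha$.

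More precisely, the argument I would write runs as follows: pick a free closed disk $E$ with $\alpha(0)\in\operatorname{int}(E)$; then $f(E)$ is a free disk containing $\alpha(1)=f(\alpha(0))$. Enlarge: there is a closed topological disk $\Delta$ with $\alpha\subset\Delta$ and $\Delta\subset E\cup f(E)$ — possible because $\alpha$ runs from a point of $E$ to a point of $f(E)$ and meets $f(\alpha)$ only at $\alpha(1)\in f(E)$, so $\alpha$ can be homotoped/covered within the union of the two thin disks. Now suppose $f^{n}(\alpha)\cap\alpha\neq\emptyset$ for some $|n|\geq 2$. Then $f^{n}(\Delta)\cap\Delta\neq\emptyset$, hence $f^{n}(E\cup f(E))\cap(E\cup f(E))\neq\emptyset$, which expands into four conditions of the form $f^{n+j-k}(E)\cap E\neq\emptyset$ with $j,k\in\{0,1\}$, i.e. $f^{m}(E)\cap E\neq\emptyset$ for some $m\in\{n-1,n,n+1\}$. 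Since $|n|\geq 2$, each such $m$ is nonzero, contradicting the Free disk lemma applied to the free disk $E$.

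The main obstacle is the purely topological construction of the disk $\Delta$ (or equivalently the retracted free disk $E$): one must be careful that thickening the arc $\alpha$ near its endpoint $\alpha(1)$ does not force overlap between $f^{j}(E)$ and $E$ for $j=\pm 1$ in a way that spoils the bookkeeping, and that $\Delta$ genuinely sits inside $E\cup f(E)$ rather than just close to it. This is where one uses $\alpha\cap f(\alpha)=\{\alpha(1)\}$ crucially — it is exactly what guarantees the two thin disks around $\alpha$ and $f(\alpha)$ can be chosen to meet only in a small bigon around $\alpha(1)$, so their union is still a disk and still covers $\alpha$. Everything else is a formal consequence of the Free disk lemma and the index shift $f^{n}(f^{j}(E))\cap f^{k}(E) = f^{k}\big(f^{n+j-k}(E)\cap E\big)$.
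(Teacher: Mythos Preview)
Your approach has a genuine gap: the free disk $E$ with $\alpha \subset E \cup f(E)$ (equivalently, the disk $\Delta$ with $\alpha \subset \Delta \subset E \cup f(E)$) that you need does not exist. If $E$ is any free disk, open or closed, then $E$ and $f(E)$ are disjoint, so $E \cup f(E)$ is a disjoint union of two nonempty sets and hence disconnected. Since $\alpha$ is connected, the inclusion $\alpha \subset E \cup f(E)$ would force $\alpha \subset E$ or $\alpha \subset f(E)$. In the first case both $\alpha(0)$ and $\alpha(1) = f(\alpha(0))$ lie in $E$, so $\alpha(1) \in E \cap f(E)$, contradicting freeness of $E$; the second case is symmetric. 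Thus the covering you want cannot be arranged, and your index-shift bookkeeping, while formally correct, rests on an impossible hypothesis. The ``retracted'' variant you sketch earlier fails for the same reason: any open set containing $\alpha\setminus\{\alpha(1)\}$ has $\alpha(1)$ in its closure, and since $\alpha(1)=f(\alpha(0))$ lies in the image of that set under $f$, the set meets its own image.

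The paper's argument avoids this by working with a \emph{sub-arc} of $\alpha$ rather than trying to cover all of $\alpha$ in advance. Suppose $f^{n}(\alpha)\cap\alpha\neq\emptyset$ with $|n|\geq 2$, and pick $x\in\alpha$ with $f^{-n}(x)\in\alpha$. If $\{x,f^{-n}(x)\}=\{\alpha(0),\alpha(1)\}$ then $x$ is periodic, contradicting the free disk lemma. Otherwise the sub-arc $\beta$ of $\alpha$ joining $x$ to $f^{-n}(x)$ is a \emph{proper} sub-arc. Any proper sub-arc $\beta\subsetneq\alpha$ is free: $\beta\cap f(\beta)\subset\alpha\cap f(\alpha)=\{\alpha(1)\}$, and $\alpha(1)\in\beta\cap f(\beta)$ would require both $\alpha(1)\in\beta$ and $\alpha(0)=f^{-1}(\alpha(1))\in\beta$, forcing $\beta=\alpha$. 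Now thicken $\beta$ to a free open disk; this disk contains both $x$ and $f^{-n}(x)$, so it meets its own $n$-th iterate, contradicting the free disk lemma. The key idea you are missing is to let the hypothetical intersection point single out the free sub-arc, instead of trying to manufacture a free disk that swallows the whole translation arc.
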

\begin{proof}
Assume by contradiction that there is $x \in \alpha$ such that $f^{-n}(x) \in \alpha$ for some $n \neq -1,0,1$.
A special case is when $\{x,f^{-n}(x)\} = \{\alpha(0),\alpha(1)\}$. Then $x$ must be a periodic point, which contradicts the lemma.
Thus the special case does not occur, which means that the sub-arc of $\alpha$ joining $x$ and $f^{-n}(x)$ is a not equal to $\alpha$.
In particular it is free, and thus by thickening it, we see that it is included in a free topological open disk. This disk contradicts the free disk lemma.
\end{proof}
The above proof implicitly uses the following version of the Schoenflies theorem (where?...): \emph{any simple arc of the plane is the image of a segment under a homeomorphism of the plane}.

\bigskip

We end this section by giving a direct construction of a translation arc (with no reference to the plane translation theorem).
A topological closed disk $B$ is called \emph{critical} if the interior of $B$ is free, but $B$ is not. Let $B$ be a critical disk containing some point $x$ in its interior. Choose some point $y \in B \cap f(B)$, some arc $\gamma_{1}$ joining $x$ to $y$ and included in $\inte B$ except at $y$, and some arc $\gamma_{2}$ joining $x$ to $f^{-1}(y)$ and included in $\inte B$ except at $f^{-1}(y)$, such that $\gamma_{1} \cap \gamma_{2} = \{x\}$, so that $\gamma_{1} \cup \gamma_{2}$ is a simple arc. We construct a simple arc from $x$ to $f(x)$ by gluing $\gamma_{1}$ with $f(\gamma_{2})$.
The following statement implies that such an arc is a translation arc for $x$.

\begin{center}
\fbox{
\begin{tikzpicture}[scale=1.4]
\draw (0,0)  circle (1cm);

\draw  (45:1) .. controls +(-1,1) and +(0.5,2)  .. (3,0)
		.. controls +(-0.5,-2) and +(-1,-1)  .. (-45:1)
		 .. controls +(1,1) and +(0.5,-0.5) .. (1.6,0.5)
		.. controls +(-0.5,0.5) and  +(0.5,-0.5) .. (45:1) ;
\draw (0,0) node {$\bullet$} node [below] {$x$} -- (45:1) node [pos=0.4,above] {$\gamma_{1}$}  node {$\bullet$} node [below] {$y$} ;
\draw (45:1)  .. controls +(30:1) and +(0,1)  .. (2,0) node {$\bullet$} node [below] {$f(x)$} ;
\draw [dashed] (0,0) -- (135:1) node [pos=0.6,below] {$\gamma_{2}$} node {$\bullet$} node [above,left] {$f^{-1}(y)$} ;
\draw (-135:1.5) node {$B$};
\draw (2.5,-1.5) node {$f(B)$};
\end{tikzpicture}~ 
}
\\
A critical disk and a geometric translation arc
\end{center}

\begin{coro}[critical disks]\label{coro.critical}
$f^{n}(B) \cap B \neq \emptyset$ if and only if $n =-1,0,1$.
\end{coro}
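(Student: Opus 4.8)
The plan is to apply the free disk lemma not to $B$ itself (which is not free) but to the open disk $\inte B$ (which is, by the definition of a critical disk), and then to transport the conclusion to the closed disk along a free arc. The cases $n=0,\pm1$ are immediate: $n=0$ is trivial, $n=1$ is precisely the fact that $B$ is not free, and $n=-1$ follows by applying $f^{-1}$. Since $f^{n}(B)\cap B\neq\emptyset$ is equivalent to $f^{-n}(B)\cap B\neq\emptyset$, it remains to derive a contradiction from the assumption that $f^{N}(B)\cap B\neq\emptyset$ for some fixed $N\geq 2$. Applying the free disk lemma to the free open disk $\inte B$ yields $f^{k}(\inte B)\cap\inte B=\emptyset$ for all $k\neq 0$; in particular no point is periodic.

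Next I would observe that, since $B=\overline{\inte B}$ and $f$ is a homeomorphism, one has $f^{k}(B)\cap B=f^{k}(\partial B)\cap\partial B$ for every $k\neq 0$: a point of $f^{k}(B)\cap B$ lying in $\inte B$ (or in $f^{k}(\inte B)$) would belong to the closure of the disjoint open set $f^{k}(\inte B)$ (resp. of $\inte B$) and hence force the two open sets to meet. So I pick $z\in\partial B\cap f^{N}(\partial B)$, set $w:=f^{-N}(z)\in\partial B$ — with $w\neq z$ since $w$ is not periodic — and join $w$ to $z$ by a simple arc $\delta\subseteq B$ that meets $\partial B$ only at its two endpoints.

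The core of the argument is that $\delta$ is free. Indeed $\delta\cap f(\delta)\subseteq B\cap f(B)=\partial B\cap f(\partial B)$, so $\delta\cap f(\delta)\subseteq\{w,z\}$; but $w\notin f(\delta)$, because $f^{-1}(w)$ lies neither in $\inte B$ (otherwise, $w$ being in $\overline{\inte B}$, the set $f(\inte B)$ would meet $\inte B$) nor in $\{w,z\}$ (the equalities $f^{-1}(w)=w$ and $f^{-1}(w)=z$ would exhibit a fixed point, resp. a periodic point of period dividing $N+1$), hence $f^{-1}(w)\notin\delta$; and symmetrically $z\notin f(\delta)$, because $f^{-1}(z)=f^{N-1}(w)$ lies neither in $\inte B$ — here one uses $f^{N-1}(\inte B)\cap\inte B=\emptyset$, i.e. $N\geq 2$ — nor in $\{w,z\}$. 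Thickening the free arc $\delta$ into a free open topological disk $E$ containing $w$ and $z$, the free disk lemma gives $f^{N}(E)\cap E=\emptyset$, yet $z=f^{N}(w)\in f^{N}(E)\cap E$, a contradiction. I expect the only delicate point to be the bookkeeping of this last step — systematically excluding each candidate coincidence using merely the absence of fixed and periodic points together with the freeness of $\inte B$; the conceptual shortcut that makes everything else routine is to run the free disk lemma on $\inte B$ and to carry a free \emph{arc}, rather than a disk, between $w$ and $z$.
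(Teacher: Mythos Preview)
Your proof is correct and is precisely the elaboration the paper has in mind: the paper's own proof is just the one-line hint ``use again the idea of the proof of the corollary on translation arcs,'' and you have carried out that idea in full---locating two boundary points $w,z$ with $f^{N}(w)=z$, joining them by an arc in $B$, checking freeness via the freeness of $\inte B$ and the absence of periodic points, and then thickening and invoking the free disk lemma. One minor streamlining: once you know $\delta\cap f(\delta)\subset \partial B\cap f(\partial B)$, intersecting with $\delta$ and with $f(\delta)$ gives $\delta\cap f(\delta)\subset\{w,z\}\cap\{f(w),f(z)\}$, and each of the four possible coincidences $w=f(w)$, $z=f(z)$, $w=f(z)$, $z=f(w)$ immediately forces a periodic point; this replaces your two separate case analyses by a single symmetric check.
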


By making a euclidean disk grow until it touches its image, we see that for every given point there is a unique critical disk among euclidean disks centered at the point. When $B$ is a euclidean disk, we may choose $\gamma_{1}$ and $\gamma_{2}$ to be euclidean segments in the previous construction, as on the previous figure. Then we say that the translation arc is \emph{geometric}.

\begin{proof}[Proof of corollary~\ref{coro.critical}]
Use again the idea of the proof of the corollary on translation arcs. Details are left to the reader. 
\end{proof}

\begin{exercise}
Prove that any neighborhood of any arc $\gamma$ joining a point $x$ to its image contains a topological disk which is critical and contains $x$ in its interior.
\end{exercise}

\subsection{The homotopy class of translation arcs}

The following proposition will be the key to deduce the fixed point theorem, as stated in the introduction, from an ``intrinsic'' theorem dealing with Brouwer homeomorphisms.
It is a weak version of Corollary~6.3 of~\cite{handel1999fixed}. The weak version is sufficient for our needs, but we will also be able to deduce the strong version from the weak (corollary~\ref{coro.strong} below).

Let $\cO(x_{0}) = \{f^n(x_{0}), n \in \bbZ\}$. Let $\alpha, \alpha':[0,1] \to \bbR^2$ be two curves joining $x_{0}$ to $f(x_{0})$. A \emph{homotopy} (with fixed end-points) from $\alpha$ to $\alpha'$ is a continuous map $H:[0,1]^2 \to \bbR^2, (s,t) \mapsto \alpha_{t}(s)$ such that $\alpha_{0}=\alpha, \alpha_{1}=\alpha'$ and each $\alpha_{t}$ is a curve joining $x_{0}$ to $f(x_{0})$. The homotopy is \emph{relative to $\cO(x_{0})$} if every curve $\alpha_{t}$ meets $\cO(x_{0})$ only at its end-points. The homotopy is an \emph{isotopy} if every curve $\alpha_{t}$ is injective. Standard results in surface topology imply that two injective curves $\alpha,\alpha'$ which are homotopic relative to $\cO(x_{0})$ are also isotopic relative to $\cO(x_{0})$.\footnote{This fact is actually included in the properties of hyperbolic geodesics, see in particular property 3 of the appendix.}

\begin{prop}\label{prop.translation-arcs}
Let $\alpha_{0}, \alpha_{1}$ be two translation arcs for a Brouwer homeomorphism $f$ for the same point $x_{0}$. Then $\alpha_{0}$ and $\alpha_{1}$ are homotopic relative to $\cO(x_{0})$.
\end{prop}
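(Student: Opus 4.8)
The plan is to show that both translation arcs $\alpha_0$ and $\alpha_1$ are homotopic, relative to $\cO(x_0)$, to a common ``model'' arc, so that homotopy is reduced to understanding a single translation arc up to homotopy rel $\cO(x_0)$. The natural candidate for the model is a geometric translation arc built from the unique euclidean critical disk centered at $x_0$ (as in the construction at the end of Section~1.3). So the first step is: given an arbitrary translation arc $\alpha$ for $x_0$, show $\alpha$ is homotopic rel $\cO(x_0)$ to this geometric arc. The key input is the Free Disk Lemma and its corollaries: the union of all iterates $\bigcup_{n\in\ZZ} f^n(\alpha)$ is a properly embedded line (a streamline) meeting $\cO(x_0)$ exactly in $\cO(x_0)$, and similarly for the geometric arc. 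One then wants to find an isotopy of the plane, supported away from $\cO(x_0)\setminus\{x_0,f(x_0)\}$, carrying one arc to the other; equivalently, one uses the Schoenflies-type statement already invoked in the excerpt (any simple arc is the image of a segment under a global homeomorphism) together with the fact that $\alpha$ and $f(\alpha)$ meet only at the common endpoint.

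More concretely, here is the route I would actually carry out. Enclose $\alpha \cup f(\alpha)$ in a topological disk $D$ whose interior meets $\cO(x_0)$ only in $\{x_0, f(x_0)\}$ — this is possible because $\alpha$ is a translation arc, hence $f^n(\alpha)\cap\alpha\neq\emptyset$ only for $n=-1,0,1$ (Corollary after the Free Disk Lemma), so the other points of the orbit stay at positive distance from $\alpha\cup f(\alpha)$; a small thickening does the job, using the plane Schoenflies theorem to know the thickening is a genuine disk. Inside $D$, the arc $\alpha$ from $x_0$ to $f(x_0)$ is, by the Schoenflies theorem for arcs, unknotted: there is a homeomorphism of $D$ to the standard closed disk sending $\alpha$ to a diameter, fixing $x_0$ and $f(x_0)$. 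Hence any two translation arcs that are both contained in such a disk $D$ (with the same pair of marked points in the interior) are isotopic rel those two points, and in particular rel $\cO(x_0)$, since the isotopy is supported in $D$ and $D$ meets $\cO(x_0)$ only in $\{x_0,f(x_0)\}$. The remaining work is to reduce the general case to this one: given two arbitrary translation arcs $\alpha_0,\alpha_1$ for $x_0$, I would first homotope each (rel $\cO(x_0)$) into a small neighborhood of the geometric arc. For $\alpha_i$, push it across the complement of $\cO(x_0)$ toward the geometric arc; the only obstruction to doing this freely is the orbit $\cO(x_0)$, and the point is that $\alpha_i$ and the geometric arc together bound regions of the plane-minus-$\cO(x_0)$ none of which we are forbidden to sweep — this is where one must argue that no point $f^n(x_0)$, $|n|\ge 2$, can be ``trapped'' between the two arcs in an essential way.

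That last point is the main obstacle, and it is exactly where Brouwer theory (rather than pure planar topology) enters. The danger is that some iterate $f^n(x_0)$ lies in a bounded complementary region of $\alpha_0\cup\alpha_1$, blocking a free homotopy. To rule this out I would use the Free Disk Lemma once more: consider the disk-like region $R$ bounded by sub-arcs of $\alpha_0$ and $\alpha_1$; its image $f(R)$ is bounded by sub-arcs of $f(\alpha_0)$ and $f(\alpha_1)$, and the control we have on $f^n(\alpha_i)\cap\alpha_j$ forces $R$ and its iterates into a configuration incompatible with an orbit point sitting inside $R$ unless $R$ is already ``trivial'' (contains no orbit point in its interior). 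Once every bounded complementary region of $\alpha_0\cup\alpha_1$ is free of orbit points, the two arcs cobound an orbit-free region, a standard innermost-disk / bigon-reduction argument in the punctured plane $\RR^2\setminus\cO(x_0)$ produces the homotopy rel $\cO(x_0)$, and we are done. I expect the clean way to package this is precisely to go through the geometric arc as the common model, so that one only ever compares a given arc to the geometric one, keeping the combinatorics of complementary regions as simple as possible.
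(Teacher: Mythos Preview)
Your proposal has a genuine gap precisely at the step you flag as ``the main obstacle.'' You want to show that no iterate $f^n(x_0)$ with $|n|\geq 2$ lies in a bigon $R$ bounded by sub-arcs $a\subset\alpha_0$ and $b\subset\alpha_1$, and you appeal to the Free Disk Lemma and to ``the control we have on $f^n(\alpha_i)\cap\alpha_j$.'' But you have no such control: the corollary of the Free Disk Lemma tells you about $f^n(\alpha_i)\cap\alpha_i$, not about the mixed intersections $f^n(\alpha_0)\cap\alpha_1$. In particular there is no reason for the Jordan curve $\partial R=a\cup b$, or the disk $R$ it bounds, to be free, so the Free Disk Lemma gives nothing, and no contradiction arises from an orbit point sitting inside $R$. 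The innermost-bigon reduction you sketch therefore does not get off the ground. (There is also a small slip: you cannot enclose $\alpha\cup f(\alpha)$ in a disk meeting $\cO(x_0)$ only in $\{x_0,f(x_0)\}$, since $f^2(x_0)\in f(\alpha)$; presumably you meant $\alpha$ alone.)

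The paper circumvents this difficulty by an entirely different, continuity-based argument. First (Lemma~\ref{lem.all-geometric}), every translation arc can be made \emph{geometric} by conjugating $f$ by some $g$; thus $g_0\alpha_0$ and $g_1\alpha_1$ become geometric translation arcs for the \emph{different} maps $f_0=g_0fg_0^{-1}$ and $f_1=g_1fg_1^{-1}$. Second (Lemma~\ref{lem.local}), for each Brouwer homeomorphism there is a topological disk $D$ containing \emph{all} its geometric translation arcs at the given point and meeting the orbit only at $\{x_0,f(x_0)\}$, and moreover this $D$ works for all nearby Brouwer homeomorphisms. One then connects $g_0$ to $g_1$ by a path $(g_t)$, covers the resulting compact path $(f_t)$ by finitely many such neighbourhoods, and concatenates trivial homotopies inside the successive disks $D_i$ to get the desired homotopy rel $\cO(x_0)$. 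The key fact that $D$ meets the orbit only at two points is established only for the specific disk built from the euclidean critical disk (via $C\cap f^{-1}(C)=\{x_0\}$ and Corollary~\ref{coro.critical}); this is why the detour through conjugacy and geometric arcs is needed, rather than a direct comparison of the two given arcs.
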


\begin{exercise}
In the special case when $f$ is a translation, one may consider the quotient $\bbR^2/f$, which is an infinite annulus. What can you say about the image of a translation arc in the quotient? The proposition, in this special case, should become ``obvious''. 
\end{exercise}

To prove the proposition we need two lemmas. The first lemma says that, up to conjugacy,  geometric translation arcs have nothing special.
\begin{lemm}\label{lem.all-geometric}
For every translation arc $\alpha$ there exists a homeomorphism $g$ isotopic to the identity such that the arc $g(\alpha)$ is a geometrical translation arc for $gfg^{-1}$. We may further assume that $g(\alpha)$ joins $0$ to $1$.
\end{lemm}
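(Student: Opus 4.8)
The plan is to build the homeomorphism $g$ by hand out of the combinatorial data of the translation arc $\alpha$, in three successive moves, each being a homeomorphism isotopic to the identity. Write $x_0 = \alpha(0)$ and $x_1 = \alpha(1) = f(x_0)$. First I would straighten the arc itself: since $\alpha$ is a simple arc, the Schoenflies-type theorem quoted after the corollary on translation arcs (any simple arc of the plane is the image of a segment under a homeomorphism of the plane) gives a homeomorphism $g_1$ of $\bbR^2$, isotopic to the identity, carrying $\alpha$ onto the euclidean segment $[0,1]$ (send $x_0 \mapsto 0$, $x_1 \mapsto 1$). After replacing $f$ by $g_1 f g_1^{-1}$ and $\alpha$ by $g_1(\alpha)$, we may thus assume $\alpha = [0,1]$, $x_0 = 0$, $f(0) = 1$.

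Next I would arrange the combinatorial position of $\alpha$ relative to a euclidean critical disk. By the remark preceding Corollary~\ref{coro.critical}, there is a unique euclidean critical disk $B$ centered at $0$; let $\rho$ be its radius. The arc $\alpha = [0,1]$ leaves $B$ at the point $p = (\rho,0) \in \partial B$ and continues to $1 = f(0)$. What I want is for $\alpha$ to be realizable as a \emph{geometric} translation arc for the conjugated map, i.e.\ to be of the form $\gamma_1 \cup f(\gamma_2)$ with $\gamma_1, \gamma_2$ euclidean segments from $0$ inside $B$. The segment $\gamma_1 = [0,p]$ is already the relevant radius. The subtle point is $\gamma_2$: we need the sub-arc $f^{-1}(\alpha \cap f(B)) = f^{-1}(\alpha) \cap B$ — the part of the arc $f^{-1}(\alpha)$ running from $0$ to $f^{-1}(1) = 0$... more precisely the relevant piece is $f(\gamma_2)$, the part of $\alpha$ from $p$ to $1$, which must equal $f$ applied to a radial segment of $B$. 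Here is where I would use a second homeomorphism $g_2$, supported in a neighborhood of $\alpha$ disjoint from $\cO(0) \setminus \{0,1\}$ and from most of $B$, pushing the arc-piece from $p$ to $1$ onto $f$ of a chosen radius of $B$; this is possible precisely because, by Corollary~\ref{coro.critical} and the free disk lemma, the arc $\alpha$ together with $B$ and $f(B)$ sit in the plane in an unknotted, controlled way (no other iterates interfere). One then checks that conditions (1) and (2) in the definition of translation arc are preserved, so $g_2$-image of $\alpha$ is a geometric translation arc for the new conjugate.

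The main obstacle, and the step I would spend the most care on, is the middle one: showing that $\alpha$ can indeed be isotoped (relative to the orbit and rel endpoints, by an ambient homeomorphism isotopic to the identity) into the geometric ``$\gamma_1 \cup f(\gamma_2)$'' normal form. The issue is that a priori $\alpha$ could wind around between $p$ and $1$ in a complicated way, and one must be sure that all such windings can be undone ambiently without creating new intersections with $f(\alpha)$ or with the orbit. The tool for this is again the Schoenflies/ambient-isotopy extension theorem for arcs in the plane, applied to $\alpha \cup (\text{small thickening})$, combined with the fact from Corollary~\ref{coro.critical} that $f^n(B)\cap B = \emptyset$ for $n \neq -1,0,1$, which confines the obstruction to the two-disk picture $B \cup f(B)$ where everything is planar and tame. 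Once the normal form is achieved, composing $g = g_2 \circ g_1$ finishes the proof, and the final normalization ``$g(\alpha)$ joins $0$ to $1$'' is built into $g_1$ from the start (post-compose with an affine map if one insists on exactly $0$ and $1$).
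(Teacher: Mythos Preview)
Your second step has a genuine gap. After straightening $\alpha$ to $[0,1]$, you take the euclidean critical disk $B$ (radius $\rho$, centered at $0$) for the current $f$ and look for a $g_2$ ``supported in a neighborhood of $\alpha$ disjoint from $\ldots$ most of $B$'' making $g_2(\alpha)$ geometric for $g_2 f g_2^{-1}$. But unwind what this asks. If $g_2$ is the identity on $B$, then $B$ is still the critical disk for $g_2 f g_2^{-1}$, the radial piece $\gamma_1 = [0,p]$ is unchanged, and the outer piece of $g_2(\alpha)$ must equal $(g_2 f g_2^{-1})(\gamma_2) = g_2(f(\gamma_2))$ for some radius $\gamma_2 \subset B$. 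Cancelling $g_2$ gives $[p,1] = f(\gamma_2)$, i.e.\ $f^{-1}([p,1])$ must \emph{already} be a euclidean radius of $B$. Nothing you have done arranges this, and a $g_2$ supported off $B$ cannot fix it: the defect lives inside $B$, in the shape of $f^{-1}(\alpha) \cap B$. If instead you allow $g_2$ to act on $B$, the euclidean critical disk for $g_2 f g_2^{-1}$ changes and you are back to square one --- the argument becomes circular. There is also a smaller issue upstream: the exit point $p = (\rho,0)$ of $[0,1]$ from $B$ need not lie in $B \cap f(B)$ at all, so it is not a legitimate junction point $y$ for the geometric construction.

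The paper avoids this circularity by inverting the logic. Rather than fixing a euclidean critical disk and forcing $\alpha$ into geometric shape relative to it, one first builds a \emph{topological} critical disk $B$ adapted to $\alpha$: namely one whose boundary meets $f^{-1}(\alpha) \cup \alpha$ in exactly two points, a point $y$ on $\alpha$ and $f^{-1}(y)$ on $f^{-1}(\alpha)$. Then a single application of Schoenflies sends the whole configuration $(B,\, \alpha \cap B,\, f^{-1}(\alpha) \cap B)$ to a euclidean disk with two radial segments, and $g(\alpha)$ is geometric for $gfg^{-1}$ by construction. The adapted $B$ is produced concretely: straighten $\alpha$ and $f^{-1}(\alpha)$ to collinear horizontal segments, pick a short vertical transversal $\gamma$ through the midpoint of $\alpha$ with $\gamma \cap f^{-1}(\gamma) = \emptyset$, straighten $f^{-1}(\gamma)$ to be vertical as well, and take $B$ to be a thin horizontal ellipse tangent to $\gamma$ and $f^{-1}(\gamma)$. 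The point is that you never have to reconcile a pre-chosen euclidean disk with a later conjugation; the critical disk is tailored to $\alpha$ from the start.
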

We insist that $g$ will be isotopic to the identity, but not isotopic to the identity relative to an orbit of $f$. 
\begin{proof}
We look for a situation homeomorphic to the picture of a geometrical arc and its critical euclidean disk. Namely, we want to find a
topological closed disk $B$ which is critical, contains $\alpha(0)$ in its interior, and such that 
the boundary $\partial B$ meets $f^{-1}(\alpha) \cup \alpha$ in exactly two points, a point $y$ on $\alpha$ and its inverse image $f^{-1}(y)$ on $f^{-1}(\alpha)$.
 Once we have found such a $B$, an adapted version of Schoenflies theorem provides a homeomorphism $g$ such that $g(B)$ is a euclidean disk centered at $g(x)$, and $g(f^{-1}(\alpha) \cap B), g(\alpha \cap B)$ are euclidean segments, and such a $g$ satisfies the conclusion of the lemma.
Here is a way to construct $B$. Up to making a first conjugacy, one can assume that $\alpha$ and its inverse image are horizontal segments. Choose a vertical small segment $\gamma$ centered at the middle of $\alpha$, such that $f^{-1}(\gamma)$ is disjoint from $\gamma$. Up to a new conjugacy, we assume that $f^{-1}(\gamma)$ is also a vertical segment. Then $B$ may be chosen as a thin horizontal ellipse (or rectangle) tangent to $\gamma$ and $f^{-1}(\gamma)$, as shown on the picture.
\end{proof}

\begin{center}
\fbox{
\begin{tikzpicture}[scale=2]
\draw  (0,0) node {$\bullet$} -- (1,0)
		node {$\bullet$}  -- (2,0)
		node {$\bullet$} ;
\draw (0,0) node [below] {$f^{-1}(\alpha)$} ;
\draw (1.7,0) node [below] {$\alpha$} ;
\draw (0.5,-0.5)  -- (0.5,0.5);
\draw (1.5,-0.5)  -- (1.5,0.5);
\draw (0.5,0.5) node [above] {$f^{-1}(\gamma)$} ;
\draw (1.5,0.5) node [above] {$\gamma$} ;

\pgfsetfillopacity{0.1} 
\draw [fill] (1,0) ellipse (0.5 and 0.1);
\end{tikzpicture}~ 
}
\\
Construction of an adapted critical disk
\end{center}

As before we consider a Brouwer homeomorphism $f$, and some point $x_{0}$. To prove the proposition we need to thoroughly analyze the geometric translation arcs at $x_{0}$.
Let $B_{f}$ be the unique euclidean critical disk centered at $x_{0}$, and $S$ be its circle boundary. In the (easy) case when $B_{f}$ meets its image at a single point, there is a unique geometric translation arc for $x_{0}$, and we define $C$ to be this arc (as this is the easy case, we will not discuss it anymore). Assume we are in the opposite case (see the picture below). 
The set $S \setminus f(B_{f})$ is a union of open arcs of the circle $S$;  exactly one of these arcs is included in the boundary of the unbounded component of $\bbR^2 \setminus (B_{f} \cup f(B_{f}))$, let $y,z$ be the (distinct) end-points of this arc. Let $\gamma_{y}, \gamma_{z}$ be the geometric translation arcs containing respectively $y,z$. It is easy to see that $\gamma_{y} \cup \gamma_{z}$ is a Jordan curve, let $C$ be the closed topological disk bounded by this curve.
We claim that 
$$
C \cap f^{-1}(C) = \{x_{0}\}. 
$$
Indeed, we first note that $\partial C \cap f^{-1}(\partial C) = \{x_{0}\}$: this is because 

\noindent (1) by construction of geometric translation arcs, $\partial C \cap f^{-1}(\partial C) \cap B_{f} = \{x_{0}\}$; 

\noindent (2) $\partial C \setminus B_{f} \subset f(B_{f})$, while $f^{-1}(\partial C) \setminus B_{f} \subset f^{-1}(B_{f})$, and $f(B_{f}) \cap f^{-1}(B_{f})=\emptyset$ (corollary~\ref{coro.critical} on critical disks). 

From this we deduce that either the claim holds, or one of the two disks $C$ and $f^{-1}(C)$ contains the other one, but in this last case the Brouwer fixed point theorem would provide a fixed point for $f$, a contradiction. This proves the claim.

\begin{center}
\fbox{
\begin{tikzpicture}[scale=1.4]
\draw (0,0)  circle (1cm);
\draw (-1,-1) node {$B_{f}$};
\draw (3,-1) node {$f(B_{f})$};
\draw  (45:1) .. controls +(-1,1) and +(0.5,2)  .. (3,0)
		.. controls +(-0.5,-2) and +(-1,-1)  .. (-45:1)
		 .. controls +(1,1) and +(0.5,-0.5) .. (1.6,0.5)
		.. controls +(-0.5,0.5) and  +(0.5,-0.5) .. (45:1) ;
\draw [thick] (0,0) node {$\bullet$} node [left] {$x_{0}$} -- (45:1) 
  node {$\bullet$} node [below] {$y$} ;
\draw [thick]  (45:1)  .. controls +(30:1) and +(0,1)  .. (2,0) node {$\bullet$} node [right] {$f(x_{0})$} ;
\draw  [thick] (0,0)  -- (-45:1) 
  node {$\bullet$} node [above] {$z$} ;
\draw  [thick]  (2,0)  .. controls +(0,-1) and +(1,0) ..  (-45:1) ;

\pgfsetfillopacity{0.05} 
\def \eps{0.1}
\path [draw,dashed,fill] (0-\eps,0) -- (0.7,0.7+\eps) .. controls +(30:1) and +(0,1) .. (2+\eps,0)  .. controls +(0,-1) and +(1,0) ..  (0.7,-0.7-\eps)  -- cycle ;

\end{tikzpicture}~ 
}
\\ \nopagebreak
Construction of the topological disks $C$ and $D$
\end{center}

Note that $C$ contains all the geometric translation arcs for the point $x_{0}$. From the disk $C$ we will construct another (slightly bigger) disk $D$ whose properties are given by the following lemma.
\begin{lemm}\label{lem.local}
There exists a topological disk $D$, and a neighbourhood $V$ of $f$ in the space of Brouwer homeomorphisms (equipped with the topology of uniform convergence on compact subsets of the plane), such that
for every $f'\in V$,
\begin{itemize}
\item $\inte D$ contains every geometric translation arc for the map $f'$ and the point $x_{0}$,
\item $D \cap \{f'^n(x_{0}), n \in \bbZ\} = \inte D \cap \{f'^n(x_{0}), n \in \bbZ\} =  \{x_{0}, f'(x_{0})\}$.
\end{itemize}
\end{lemm}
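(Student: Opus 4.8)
The plan is to show that the euclidean critical disk $B_f$ at $x_0$, its image, and the auxiliary disk $C$ constructed above all depend continuously on the homeomorphism, and then to take $D$ to be a sufficiently thin collar of $C$. Writing $B_r$ for the closed euclidean disk of radius $r$ centred at $x_0$ (so $B_f=B_{\rho(f)}$, where $\rho(f)$ is the critical radius), I would first check that $f'\mapsto\rho(f')$ is continuous at $f$. Lower semicontinuity: for $r<\rho(f)$ the compact sets $B_r$ and $f(B_r)$ are disjoint, hence at positive distance, so $f'(B_r)$ stays disjoint from $B_r$ once $f'$ is $C^0$-close to $f$ on $B_r$. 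Upper semicontinuity: for $r>\rho(f)$ one has $f(\inte B_r)\cap\inte B_r\neq\emptyset$ (already $f(B_f)$ meets $B_f\subset\inte B_r$), an open condition on $f'$. Consequently $B_{f'}$, $f'(B_{f'})$ and every step of the construction of $C$ vary continuously in the Hausdorff metric, so the disk $C_{f'}$ built from $f'$ converges to $C_f=C$ as $f'\to f$; and since every geometric translation arc for $(f',x_0)$ has the form $\gamma_1\cup f'(\gamma_2)$ with $\gamma_1,\gamma_2$ euclidean radii of $B_{f'}$, it lies in $B_{f'}\cup f'(B_{f'})$, hence in $C_{f'}$.

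Next I would show that for every Brouwer homeomorphism $g$ one has $C_g\cap\{g^n(x_0):n\in\bbZ\}=\{x_0,g(x_0)\}$. The discussion preceding the lemma gives $C_g\cap g^{-1}(C_g)=\{x_0\}$, whence $g^a(C_g)\cap g^{a+1}(C_g)=\{g^a(x_0)\}$ for all $a$; combining the inclusion $C_g\subset B_g\cup g(B_g)$ (clear from the construction) with Corollary~\ref{coro.critical} (so $B_g\cap g^n(B_g)=\emptyset$ for $|n|\geq2$) yields $C_g\cap g^m(C_g)=\emptyset$ for all $|m|\geq3$; the cases $m=\pm1,\pm2$ are eliminated by hand (e.g.\ $g^2(x_0)\in C_g$ forces $g(x_0)\in C_g\cap g^{-1}(C_g)=\{x_0\}$, and $g^{-2}(x_0)\in C_g\subset B_g\cup g(B_g)$ forces, using $x_0\in B_g$, either $g^{-2}(x_0)\in B_g\cap g^{-2}(B_g)=\emptyset$ or $g^{-3}(x_0)\in B_g\cap g^{-3}(B_g)=\emptyset$). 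Hence no orbit point other than $x_0,g(x_0)$ meets $C_g$.

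Now I would take $D$ to be a thin closed topological disk neighbourhood of $C$ with $x_0,f(x_0)\in\inte D$, chosen so narrow that (a) $D\cap\{f^n(x_0):n\in\bbZ\}=\{x_0,f(x_0)\}$ --- possible by the previous paragraph (with $g=f$) and the local finiteness of $\cO(x_0)$, which keeps every $f^n(x_0)$, $n\neq0,1$, at positive distance from the compact set $C$ --- and (b) the two strips making up $\overline{D\setminus C}$ are tubular neighbourhoods of the boundary translation arcs $\gamma_y,\gamma_z$ so thin that each such strip $\Sigma$ still satisfies $\Sigma\cap f^m(\Sigma)=\emptyset$ for $|m|\geq2$ (true for a thin enough neighbourhood of a translation arc $\gamma$, since $\gamma\cap f^m(\gamma)=\emptyset$ there and only finitely many $m$ matter). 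With $D$ fixed, shrink $V$ inside the neighbourhood of the first paragraph: the inclusion $C_{f'}\subset\inte D$ --- the first bullet of the lemma --- and $x_0,f'(x_0)\in\inte D$ then follow from continuity.

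The remaining assertion, the second bullet $\{f'^n(x_0):n\in\bbZ\}\cap D=\{x_0,f'(x_0)\}$ uniformly over $f'\in V$, is where the real work lies and is the step I expect to be the main obstacle. For $|n|$ bounded it follows from continuity and (a). Distant orbit points, though, have to be excluded separately, since Brouwer homeomorphisms are not uniformly proper. I would argue by contradiction: a sequence $f_k\to f$ with $n_k\to\pm\infty$ and $f_k^{n_k}(x_0)\in D$ would, after extraction and using $f_k^{-1}\to f^{-1}$ on compact sets, produce a limit point $p\in D$ whose entire $f$-orbit is disjoint from $\cO(x_0)$ --- the free disk lemma applied to one fixed small free disk $\Delta\ni x_0$ (free for $f$, hence for $f_k$ with $k$ large) forbids the tail of $\cO_{f_k}(x_0)$ near time $n_k$ from accumulating onto $\cO(x_0)$. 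One then contradicts the structure of $\partial D$: being a thickening of two $f$-translation arcs, the presence of such a $p\in D$ would force an iterate of one of these translation arcs to re-enter $D$, against the property secured in (b) and Corollary~\ref{coro.critical}. Making this last step precise is essentially the whole proof; the rest is soft.
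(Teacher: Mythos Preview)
Your first paragraph (continuity of the critical radius and of $C_{f'}$) is fine and matches the paper. The second paragraph is roughly right but the inclusion $C_g\subset B_g\cup g(B_g)$ is not quite obvious when $B_g\cup g(B_g)$ fails to be simply connected; in any case the paper does not need this step.

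The genuine gap is exactly where you locate it: the uniform control over $\{f'^n(x_0)\}$ for unbounded $n$. Your limiting argument does not close. You extract a limit point $p\in D$ with $f$-orbit disjoint from $\cO(x_0)$, but the concluding sentence --- that ``the presence of such a $p\in D$ would force an iterate of one of these translation arcs to re-enter $D$'' --- has no justification: $p$ is an arbitrary point of $D$, not on $\cO(x_0)$, and nothing ties its $f$-orbit to the translation arcs $\gamma_y,\gamma_z$. There is no reason an $f$-orbit cannot pass through $D$ infinitely often while avoiding $\cO(x_0)$, so no contradiction is reached.

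The paper bypasses this difficulty completely with a single clean idea. Since $C\cap f^{-1}(C)=\{x_0\}$, the disk $C$ is ``almost free'', and one can choose $D$ as a small neighbourhood of $C$ that splits as $D=\delta\cup\delta'$ with $\delta,\delta'$ closed topological disks \emph{free for $f$}, $x_0\in\inte\delta$, $f(x_0)\in\inte\delta'$. Freeness is an open condition, so for $f'$ close to $f$ both $\delta,\delta'$ remain free for $f'$, and $f'(x_0)\in\inte\delta'$. Now the free disk lemma does all the work at once: $x_0\in\delta$ and $\delta$ free for $f'$ give $f'^n(x_0)\notin\delta$ for every $n\neq0$; likewise $f'(x_0)\in\delta'$ gives $f'^n(x_0)\notin\delta'$ for every $n\neq1$. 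Hence $f'^n(x_0)\in D$ forces $n\in\{0,1\}$, with no case distinction between small and large $|n|$ and no limiting argument. This decomposition into two free disks is the missing ingredient in your approach.
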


\begin{proof}
We treat only the case when $B_{f} \cap f(B_{f})$ is not reduced to a single point (the opposite case is similar but far easier).
First assume that $D$ is any topological disk whose interior contains $C$. In particular, the interior of $D$ contains all the geometric translation arcs for $f$. Consider another Brouwer homeomorphism $f'$, and let $B_{f'}$ be the unique euclidean critical disk for $f'$ which is centered at $x_{0}$. When $f'$ is close to $f$, the disk $B_{f'}$ is close to $B_{f}$, and the intersection $B_{f} \cap f(B_{f})$ must be included in $D$. From this it is not difficult to see that $D$ contains all the geometric translation arcs for $f'$, which gives the first property of the lemma.

It remains to get the second property.
For this we choose $D$ to be a small enough neighborhood of $C$, so that it may be written as the union of two topological closed disks $\delta,\delta'$ satisfying the following conditions:
\begin{enumerate}
\item $\delta,\delta'$ are free for $f$;
\item $x_{0} \in \inte \delta$ and $f(x_{0}) \in \inte \delta'$. 
\end{enumerate}
This is possible since the disk $C$ is ``almost free'' (see the above claim, $C \cap f^{-1}(C) = \{x_{0}\}$).
 Define $V$ to be the set of Brouwer homeomorphisms $f'$ such that the first property of the lemma holds, and such that the above conditions 1 and 2 are still satisfied for $f'$, namely   $\delta,\delta'$ are free for $f'$, and  $x_{0} \in \inte \delta$ and $f'(x_{0}) \in \inte \delta'$. 
It is clear that $V$ is a neighborhood of $f$.
Let $f' \in V$, and let us check the second property of the lemma. Consider an integer $n$ such that $f'^n(x_{0})$ belongs to  $D$. Since $x_{0}$ is in $\delta$ which is free for $f'$, $f'^n(x_{0})$ may not be in $\delta$ unless $n=0$ (free disk lemma). Likewise, since $f'(x_{0})$ is in $\delta'$ which is free for $f'$, $f'^n(x_{0})$ may not be in $\delta'$ unless $n=1$. Thus $f'^n(x_{0})$ cannot be in $D = \delta \cup \delta'$ unless $n=0,1$, as wanted.
\end{proof}

\begin{proof}[Proof of the proposition]
Lemma~\ref{lem.all-geometric} provides $g_{0}, g_{1}$ such that $g_{0}\alpha_{0}, g_{1} \alpha_{1}$ are geometric translation arcs respectively for $g_{0} f g_{0}^{-1}, g_{1} f g_{1}^{-1}$. The space $\homeo_{0}(\bbR^2)$ of homeomorphisms of the plane that are isotopic to the identity is arcwise connected, thus there exists a continuous path $(g_{t})_{t \in [0,1]}$ in that space joining $g_{0}$ and $g_{1}$. Let $f_{t} = g_{t} f g_{t}^{-1}$. By composing $g_{t}$ with the translation that sends $g_{t}(x_{0})$ to $0$, we may assume that $g_{t}(x_{0}) = 0$ for every $t$ ; likewise, by composing $g_{t}$ with the (unique) complex multiplication that sends $g_{t} f (x_{0})$ to $1$, we may assume that $f_{t}(0) = g_{t}(f(x_{0})) = 1$ for every $t$.
Since complex affine maps preserves segments and circles, these modifications of $g_{t}$ do not alter the previous properties: $g_{0}\alpha_{0}, g_{1}\alpha_{1}$ are still \emph{geometric} translation arcs.

 We consider the disks $D(f_{t})$ and the neighbourhoods $V(f_{t})$ provided by Lemma~\ref{lem.local}, applied at the point $x_{0}=0$.
By compactness, there exist $t_{0} = 0 < ... < t_{\ell} = 1$ such that every $f_{t}$ with $t \in [t_{i}, t_{i+1}]$ is included in some $V_{i} = V(f_{t'_{i}})$. Let $D_{i} = D(f_{t'_{i}})$. Let $\alpha'_{0} = g_{0}\alpha_{0}, \alpha'_{1} = g_{1} \alpha_{1}$ which are geometric translation arcs resp. for $f_{t_{0}}, f_{t_{\ell}}$, and for every $i=1, \dots , \ell-1$ choose some geometric translation arc $\alpha'_{t_{i}}$ for $f_{t_{i}}$.
Now for $i=0, \dots, \ell-1$, both arcs $\alpha'_{t_{i}}$ and $\alpha'_{t_{i+1}}$ go from the point $0$ to the point $1$ and are contained in the disk $D_{i}$ (first point of lemma~\ref{lem.local}). Thus they are homotopic within $D_{i}$: we may find a continuous family of curves $(\alpha'_{t})_{t \in [t_{i},t_{i+1}]}$ connecting both arcs and still going from $0$ to $1$ and included in $D_{i}$. Point two of lemma~\ref{lem.local}  entails that for every $t$,
$$
\alpha'_{t}\cap \{f_{t}^n(0), n \in \bbZ\} =  \{0,1\}.
$$
Now let $\alpha_{t} = g_{t}^{-1} \alpha'_{t}$. This is a homotopy from $\alpha_{0}$ to $\alpha_{1}$ relative to the orbit $\cO(x_{0})$\footnote{This proof was sketched in the author's PhD thesis.}.
\end{proof}

\begin{exercise}
Prove the weak version of the plane translation theorem as a consequence of the free disk lemma. Hints: by thickening a translation arc we may construct a critical disk $\delta$ such that $\delta \cap f(\delta)$ is a simple arc. The iterates of $\delta$ give rise to a translation domain.
\end{exercise}

\bigskip
\bigskip
\bigskip

\section{Homotopy translation arcs}
\label{sec.hta}

Integral curves of flows never cross each other.
We would like to know to what extent  orbits of a Brouwer homeomorphism can cross each other, but it is not easy to give a precise meaning to this. In this direction we have defined translation arcs, in order to replace integral curves of flows by the union of iterates of a translation arc, also called streamlines. But for a general Brouwer homeomorphism the topology of a streamline may be complicated. Thus streamlines are not appropriate to define a notion of crossing. 
The idea is to relax the invariance to a \emph{homotopy} invariance. 

\bigskip

%

In this section $f$ is an orientation preserving homeomorphism of the plane.
We select finitely many points $x_{1}, \dots x_{r}$ with disjoint orbits, and let 
$$
\cO = \cO(x_{1}, \dots x_{r}) = \{f^n(x_{i}), n \in \bbZ, i = 1, \dots, r\}
$$
denote the union of their orbits. We do not demand that $f$ is fixed point free, but the $x_{i}$'s are assumed to have proper orbits: in other words they are not periodic and the set $\cO$ is locally finite. The plane translation theorem tells us that this is automatic if $f$ is fixed point free.

\subsection{Definitions}
\label{ss.hta-def}

We consider the continuous  curves $\alpha: [0,1] \to \bbR^2$ joining two points $x,y \in \cO$, whose interior $\inte \alpha := \alpha((0,1))$ is disjoint from $\cO$, and whose restriction to $(0,1)$ is injective (thus the image of $\alpha$ is homeomorphic to the circle or to the closed interval). Such a curve is said to be \emph{inessential} if $\alpha(0)=\alpha(1)$ and the bounded component of $\bbR^2 \setminus \alpha$ does not contain any point of $\cO$, otherwise it is called \emph{essential}. Let $\cA$ be the set of essential curves. The set $\cA$ is endowed with the topology of uniform convergence, and the connected components of $\cA$ are called \emph{homotopy classes}\footnote{Remember that, in this context, the notions of isotopy and homotopy coincide.}. Two curves in the same homotopy class will be said to be \emph{homotopic relative to $\cO$}. The homotopy class of $\alpha$ will be denoted by $\underline{\alpha}$, and the set of homotopy classes by $\underline{\cA}$. Note that the end-points $\underline{\alpha}(0)$, $\underline{\alpha}(1)$ are well-defined (homotopic curves have the same end-points). The map $f$ induces a map $\underline{f}$ on homotopy classes.

We will say that two curves $\alpha,\beta \in \cA$ are \emph{homotopically disjoint}, and write $\underline{\alpha} \cap \underline{\beta} = \emptyset$, if $\underline{\alpha} \neq \underline{\beta}$ and there exist $\alpha' \in\underline{\alpha}, \beta' \in \underline{\beta}$ such that $\alpha' \cap \beta' \subset \cO$, that is, the curves are disjoint except maybe at their end-points.
Let $\alpha \in \cA$ be a simple arc joining some $x\in X$ to its image $f(x)$. 
The arc is a \emph{homotopy translation arc} (for the point $x$) if the curve is homotopically disjoint from all its iterates, that is, for every $n \neq 0$,
$\underline{f}^n(\underline{\alpha}) \cap \underline{\alpha} = \emptyset$. 
A sequence of curves $(\alpha_{n})_{n \geq 0}$ in $\cA$ is said to be \emph{homotopically proper} if 
 for every compact subset $K$ of the plane, there exists $n_{0}$ such that for every $n \geq n_{0}$, there exists $\alpha' \in \underline{\alpha}_{n}$ such that $\alpha' \cap K = \emptyset$. 
A homotopy translation arc $\alpha$ is \emph{forward proper} if the sequence $(f^n(\alpha))_{n\geq0}$ is homotopically proper. The notion of \emph{backward proper homotopy translation arc} is defined in a symmetric way.

\begin{exercise}
Prove that a sequence  $(\alpha_{n})_{n \geq 0}$ is homotopically proper if and only if 
 for every $\beta \in \cA$, for every $n$ large enough, $\underline{\alpha}_{n} \cap \underline{\beta} = \emptyset$.
Hints: for the difficult part hyperbolic geodesics make life easier (see the appendix).
\end{exercise}

\subsection{Examples}
\label{ssec.examples}
The following exercise explores the properties of homotopy translation arcs on the easiest examples. 

\newpage
\begin{exercise}~

\subexo The pictures on the next page show examples of orbits of some fixed point free homeomorphisms. We start with the first three examples, which are time one maps of flows.
Try to draw several distinct homotopy translation arcs for the same point. Are they backward or forward proper?

\noindent \begin{minipage}[b]{0.6\linewidth}
\subexo  We now consider a more involved example. We begin with a map $f$ which is the time one map of a flow, with five trajectories as on the picture on the right.
The wanted map $f'=\varphi \circ f$ is obtained as the composition of $f$ with a map $\varphi$ supported on a disk $\delta$ that is free for $f$ (the shaded disk on the picture).
\end{minipage}
\noindent \begin{minipage}[b]{0.4\linewidth}
\begin{center}
\fbox{
\begin{tikzpicture}[scale=0.5]
\tikzstyle{fleche}=[>=latex,->, thick]
\draw [fleche] (0,0) -- (10,0) ; 
\draw [fleche] (10,2) -- (0,2) ;
\draw [fleche] (10,-2) -- (0,-2) ;
\def \control {6.7}
\draw [fleche] (10,1.8) .. controls +(left:\control) and +(left:\control) .. (10,0.2) ;
\draw [fleche] (0,-0.2) .. controls +(right:\control) and +(right:\control) .. (0,-1.8) ;
\end{tikzpicture} 
}
\end{center}
\end{minipage}
Let $x_{1}$ be a point, and $\gamma$ be the translation arc for $x_{1}$ as depicted on the figure. What are the $f'$ iterates of $x_{1}$?
Draw the iterates of $\gamma$. Is the corresponding homotopy translation arc forward proper? Does $x_{1}$ admit a forward proper homotopy translation arc?
Is there any homotopy translation arc for $x_{0}$ which is both forward and backward proper?
\end{exercise}

\begin{exercise}
Find a situation with $r=2$ and a homotopy translation arc which is not homotopic to a (classical) translation arc. Hints: consider a flow with several parallel Reeb strips.
\end{exercise}

\begin{figure}[p]
\begin{center}
\fbox{
\begin{tikzpicture}[scale=1]
\tikzstyle{fleche}=[>=latex,->, thick]
\draw [fleche] (0,0) -- (10,0) ; 
\draw [fleche] (0,2) -- (10,2) ;
\foreach \k in {1,3,...,9} 
	{\draw (\k,0) node {$\bullet$} ; \draw (\k,2) node {$\bullet$} ; }
\draw (5,-0.4) node {$x_{2}$} ;
\draw (5,2.4) node {$x_{1}$} ;
\end{tikzpicture} 
}
\\
$f$ is a translation, $r=2$
\end{center}

\bigskip

\begin{center}
\fbox{
\begin{tikzpicture}[scale=1]
\tikzstyle{fleche}=[>=latex,->, thick]
\draw [fleche] (0,0) -- (10,0) ; 
\draw [fleche] (10,2) -- (0,2) ;
\draw (5,-0.4) node {$x_{2}$} ;
\draw (5,2.4) node {$x_{1}$} ;
\foreach \k in {1,3,...,9} 
	{\draw (\k,0) node {$\bullet$} ; \draw (\k,2) node {$\bullet$} ; }

\end{tikzpicture} 
}
\\
$f$ is the time one map of the Reeb flow, $r=2$
\end{center}

\bigskip

\begin{center}
\fbox{
\begin{tikzpicture}[scale=1]
\tikzstyle{fleche}=[>=latex,->, thick]
\draw [fleche] (0,0) -- (10,0) ; 
\draw [fleche] (10,2) -- (0,2) ;
\foreach \k in {1,3,...,9} 
	{\draw (\k,0) node {$\bullet$} ; \draw (\k,2) node {$\bullet$} ; }
\draw [fleche] (0,0.2) .. controls +(right:5) and +(right:5) .. (0,1.8) 
\foreach \p in {0.1,0.3,...,0.9} {node[pos=\p]{$\bullet$}} node[pos=0.5, right] {$x_{3}$} ;

\draw (5,-0.4) node {$x_{2}$} ;
\draw (5,2.4) node {$x_{1}$} ;
\end{tikzpicture} 
}
\\
$f$ is the Reeb flow, $r=3$
\end{center}

\bigskip

\begin{center}
\fbox{
\begin{tikzpicture}[scale=1]
\tikzstyle{fleche}=[>=latex,->, thick]
\draw [fleche] (0,0) -- (10,0) ; 
\draw [fleche] (10,2) -- (0,2) ;
\draw [fleche] (10,-2) -- (0,-2) ;
\draw (9,1.4) node {$x_{1}$};
\draw (8,1.4) node {$\gamma$};
\foreach \k in {1,3,...,9} 
	{\draw (\k-1,0) node {$\bullet$} ; \draw (\k,2) node {$\bullet$} ; \draw (\k,-2) node {$\bullet$} ;}
\def \control {6.7}
\draw [fleche] (10,1.8) .. controls +(left:\control) and +(left:\control) .. (10,0.2) 
\foreach \p in {0.05,0.2,0.5,0.8,0.95} {node[pos=\p]{$\bullet$}}; 

\draw [fleche] (0,-0.2) .. controls +(right:\control) and +(right:\control) .. (0,-1.8) 
\foreach \p in {0.05,0.2,0.5,0.8,0.95} {node[pos=\p]{$\bullet$}}; 
\pgfsetfillopacity{0.1} 
\filldraw   (5,0) ellipse (0.2 and 1.2);

\draw (0,-2.5) node {} ;
\draw (0,2.5) node {} ;
\end{tikzpicture} 
}
\hspace{0.5cm}
\fbox{
\begin{tikzpicture}[scale=1.5]
\tikzstyle{fleche}=[>=latex,->, thick]
\draw (5,1) node {$\bullet$} ;
\draw (5,-1) node {$\bullet$} ;
\draw (5.5,0) node {$\varphi$} ;
\draw [fleche] (5,0.8) -- (5,-0.8) ;
\pgfsetfillopacity{0.1} 
\filldraw   (5,0) ellipse (0.2 and 1.4);
\end{tikzpicture} 
}
\\
$f'=\phi \circ f$ is the composition of a flow and a small perturbation, $r=4$
\end{center}

\end{figure}

\subsection{Backward \emph{and} forward proper homotopy translation arcs}

The following theorem describes the situation up to $r=3$\footnote{The results in this section will not be used in the proof of the fixed point theorem.}.
\begin{theo*}
Assume $f$ is fixed point free, in other words $f$ is a Brouwer homeomorphism. If $r=1,2,3$ then there exists  homotopy translation arcs $\gamma_{1},\dots \gamma_{r}$ for the points $x_{1}, \dots, x_{r}$ that are both backward and forward proper and such that, for every $n\in \bbZ$ and every $i \neq j$, the arcs $\gamma_{i}$ and $f^n(\gamma_{j})$ are homotopically disjoint.
\end{theo*}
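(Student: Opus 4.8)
The plan is to treat the cases $r=1,2,3$ in increasing order of difficulty, building the arcs one at a time, and to split the statement into two separate tasks: (a) making each $\gamma_i$ a homotopy translation arc that is forward \emph{and} backward proper, and (b) making the whole family, together with all its iterates, pairwise homotopically disjoint. First I would dispose of the ``homotopy translation arc'' requirement once and for all, noting that for a Brouwer homeomorphism \emph{any} translation arc $\alpha$ at a point $x$ is already a homotopy translation arc: by the corollary on translation arcs one has $f^{n}(\alpha)\cap\alpha=\emptyset$ for $n\neq 0,\pm1$, while $f(\alpha)$ meets $\alpha$ only in $\alpha(1)$ and $f^{-1}(\alpha)$ only in $\alpha(0)$; since the orbit of $x$ is proper, hence injective, the curves $\underline{f^{n}\alpha}$ and $\underline\alpha$ have distinct endpoints for every $n\neq 0$, so they are distinct homotopy classes admitting disjoint representatives, i.e. $\underline{f}^{\,n}(\underline\alpha)\cap\underline\alpha=\emptyset$. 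Thus only properness and mutual disjointness carry content.

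For $r=1$ (so $\cO=\cO(x_{1})$) I would take $\gamma_{1}$ to be a fundamental segment of a horizontal line in a translation domain of $x_{1}$ furnished by the strong plane translation theorem, whose vertical fibres are proper. Forward properness is then checked directly: given a compact $K$, enlarge it slightly so that only finitely many $f^{n}(x_{1})$ lie in it; for $n$ large both $f^{n}(x_{1})$ and $f^{n+1}(x_{1})$ lie outside $K$, and using the product structure of the flow box together with the fact that vertical fibres escape every compact set, one routes an arc from $f^{n}(x_{1})$ to $f^{n+1}(x_{1})$ along the flow-box coordinates, ``around'' $K$, meeting $\cO(x_{1})$ only at its endpoints and avoiding $K$; by Proposition~\ref{prop.translation-arcs} this arc is homotopic rel $\cO(x_{1})$ to $f^{n}(\gamma_{1})$, which is precisely forward properness. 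Backward properness is symmetric.

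For $r=2,3$ I would proceed inductively. Having produced $\gamma_{1}$ and thickened the streamline $\bigcup_{n}f^{n}(\gamma_{1})$ to an invariant flow box $U_{1}$ (as in the exercise deducing the plane translation theorem from the free disk lemma), the orbit of $x_{2}$ is still proper in the plane, so one builds $\gamma_{2}$ as above and, by an ambient isotopy, arranges that $\gamma_{2}$ and its iterates are homotopically disjoint from $\gamma_{1}$ and its iterates; then one repeats for $x_{3}$. Here one invokes the classification recalled in Section~\ref{sec.brouwer-theory} of finite families of pairwise disjoint oriented properly embedded lines in the plane (two diagrams for two lines, five for three): with at most three strands the only cyclic-order types that can occur for the homotopy streamlines are the ones realized by flows, so no obstruction arises, and the handful of diagrams can be normalized by hand.

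The main obstacle is the case $r=3$, specifically keeping all three arcs \emph{simultaneously} forward \emph{and} backward proper while remaining mutually homotopically disjoint: one direction at a time, or a pair of disjoint arcs, is comparatively soft, but the combined requirement is sharp, since Handel's examples show it fails for $r\geq 4$. I expect to need a Poincar\'e--Bendixson-type analysis carried out ``up to homotopy'': decompose the plane along the union of the three homotopy streamlines into complementary regions, track how $f$ permutes these regions, and show that with only three disjoint lines available there is not enough room for one homotopy streamline to spiral into another or to be captured between two others --- a failure of properness being exactly such a spiralling phenomenon. That combinatorial--topological step is where the bound $r\leq 3$ is genuinely used.
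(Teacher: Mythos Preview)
Your case $r=1$ is essentially correct and close in spirit to the paper's argument: both hinge on Proposition~\ref{prop.translation-arcs} (uniqueness of the homotopy class of translation arcs) to identify $f^{n}(\gamma_{1})$ with an arc constructed near infinity and disjoint from a given compact set. The paper, however, avoids the strong plane translation theorem entirely. It observes directly, via critical disks, that any point sufficiently far out admits a \emph{genuine} translation arc disjoint from $K$ (choose a critical disk in the complement of a large ball containing $K\cup f^{-1}(K)$); then Proposition~\ref{prop.translation-arcs} applies verbatim. Your detour through a flow box with proper vertical fibres works, but you then have to argue that the ``routed'' arc is homotopic rel $\cO(x_{1})$ to $f^{n}(\gamma_{1})$, which requires some care (it is not itself a translation arc, so Proposition~\ref{prop.translation-arcs} does not apply to it directly; you need the flow-box homotopy instead). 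The paper's route is shorter and stays within the weak Brouwer theorem.

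For $r=2,3$ there is a genuine gap, and the paper is explicit about it: it does \emph{not} prove these cases, stating that they ``are much harder, and necessitate the concepts of reducing lines and fitted families that disappear under iteration'' from Handel's original paper. Your inductive scheme does not get around this. The core difficulty is that properness is \emph{relative to} $\cO$: the arc $\gamma_{1}$ you build at the first step is forward proper rel $\cO(x_{1})$, but once you enlarge to $\cO(x_{1},x_{2})$ the iterates $f^{n}(\gamma_{1})$ may be forced, up to homotopy, to wind around points of $\cO(x_{2})$ infinitely often --- exactly the Reeb-type accumulation phenomenon --- so forward properness rel the new $\cO$ can fail. Your appeal to the classification of disjoint properly embedded lines is circular for the same reason: you only get proper lines \emph{after} you know the homotopy streamlines are proper, which is the conclusion you are trying to establish. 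The ``Poincar\'e--Bendixson up to homotopy'' analysis you sketch at the end is pointing in the right direction, but what actually makes it go through for $r\leq 3$ is Handel's machinery of reducing lines (topological lines disjoint from $\cO$ that are homotopically $f$-invariant and separate the orbits), together with an argument that certain fitted families become empty under iteration; this is substantially more than an ambient-isotopy adjustment, and the paper does not attempt to reproduce it.
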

The cases $r=1,2$ are in \cite{handel1999fixed} (they are essentially equivalent to Theorem 2.2 and 2.6 of that paper). The case $r=3$ may be proved using Handel's techniques. The last example in the previous section shows that the statement becomes false when $r\geq 4$. 
We will only provide a proof in the case $r=1$, using proposition~\ref{prop.translation-arcs} about the uniqueness of homotopy class of translation arcs, and the construction of translation arcs using critical disks. The other cases are much harder, and necessitate the concepts of reducing lines and fitted families that disappear under iteration, see~\cite{handel1999fixed}.

\begin{proof}[Proof when $r=1$]
We consider a Brouwer homeomorphism $f$.
As a preliminary we prove that a point sufficiently near infinity admits a translation arc sufficiently near infinity. More precisely,  let $K$ be a compact subset of the plane, and $C$ be a large disk containing both $K$ and $f^{-1}(K)$.
Let $x$ be a point outside $C \cup f^{-1}(C)$. Then $f(x)$ is outside $C$.
According to the exercise at the end of section~\ref{sec.brouwer-theory}.\ref{ssec.translation-arcs}, since the complement of $C$ is arcwise connected, we may find a topological disk $B$ containing $x$ in its interior which is critical. We have seen that there exists  a translation arc $\gamma$ for $x$ included in $B \cap f(B)$. By choice of $C$ the arc $\gamma$ is disjoint from $K$.

Now consider the orbit $\cO = \cO(x_{1})$ of some point $x_{1}$.
Let  $\gamma_{0}$ be any (classical) translation arc for the point $x_{1}$. Of course $\gamma_{0}$ is a homotopy translation arc, let us prove that it is forward proper.
Let $K$ be a compact subset of the plane. The forward orbit of $x_{1}$ is going to infinity, and according to the preliminary property, for every $n$ large enough there exists a translation arc $\gamma_{n}$ for $f^n(x_{1})$ which is disjoint from $K$. According to the uniqueness of homotopy class of translation arcs, the arc $f^n(\gamma_{0})$ is homotopic to $\gamma_{n}$ relative to $\cO$. Thus $\gamma_{0}$ is forward proper. Similarly, it is backward proper.
\end{proof}

As a corollary, we obtain that homotopy translation arcs are essentially unique when $r=1$. This reinforces proposition~\ref{prop.translation-arcs}.
\begin{coro}[\cite{handel1999fixed}, corollary 6.3]
\label{coro.strong}
Let $f$ be a Brouwer homeomorphism and $\cO = \cO(x_{1})$ be the orbit of some point $x_{1}$. Let $\gamma, \gamma'$ be two homotopy translation arcs for $x_{1}$. Then they are homotopic relative to $\cO$.
\end{coro}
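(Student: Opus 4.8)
The plan is to derive this from Proposition~\ref{prop.translation-arcs} together with the forward-and-backward properness of classical translation arcs obtained in the preceding proof. Fix once and for all a geometric translation arc $\gamma_{0}$ for $x_{1}$. By Proposition~\ref{prop.translation-arcs} every classical translation arc for $x_{1}$ is homotopic rel $\cO$ to $\gamma_{0}$, and ``homotopic rel $\cO$'' is an equivalence relation, so it suffices to prove: for every homotopy translation arc $\gamma$ for $x_{1}$ one has $\underline{\gamma}=\underline{\gamma_{0}}$; applying this to both $\gamma$ and $\gamma'$ then finishes the proof.

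I would first record three facts. (1) The iterates $\underline{f}^{\,n}(\underline{\gamma})$, $n\in\bbZ$, are pairwise homotopically disjoint: apply the homeomorphism $f^{n}$ to the defining relations of a homotopy translation arc, using that $f^{n}$ preserves $\cO$, hence carries homotopy classes to homotopy classes and homotopically disjoint pairs to homotopically disjoint pairs. (2) The preceding proof shows $\gamma_{0}$ is forward and backward proper; by the exercise characterizing homotopically proper sequences, $\underline{f}^{\,k}(\underline{\gamma_{0}})\cap\underline{\beta}=\emptyset$ for every $\beta\in\cA$ once $|k|$ is large. Taking $\beta=\gamma$ and conjugating by $f^{n}$ yields a constant $K_{0}$ with: $\underline{f}^{\,n}(\underline{\gamma})$ and $\underline{f}^{\,k}(\underline{\gamma_{0}})$ are homotopically disjoint whenever $|k-n|\ge K_{0}$. (3) Passing to hyperbolic geodesic representatives in $\bbR^{2}\setminus\cO$ (appendix), the geodesics $h_{k}$ representing the $\underline{f}^{\,k}(\underline{\gamma_{0}})$ are pairwise disjoint and, being a homotopically proper family, locally finite, so their union together with $\cO$ is a properly embedded line $L\subset\bbR^{2}$ through every point of $\cO$; and the geodesic $g$ representing $\underline{\gamma}$, which has both endpoints $x_{1},f(x_{1})$ on $L$, meets $L$ only inside the compact sub-arc $h_{-K_{0}+1}\cup\dots\cup h_{K_{0}-1}$.

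Now suppose $\underline{\gamma}\ne\underline{\gamma_{0}}$ and aim for a contradiction. Any two simple arcs joining $x_{1}$ to $f(x_{1})$ inside the disk $D$ of Lemma~\ref{lem.local} are isotopic rel endpoints, since $D$ meets $\cO$ only at $x_{1},f(x_{1})$; hence the only homotopy class with a representative in $D$ is $\underline{\gamma_{0}}$, so no representative of $\underline{\gamma}$ lies in $D$, i.e. $\underline{\gamma}$ ``winds around'' some puncture $f^{j_{0}}(x_{1})$ with $j_{0}\notin\{0,1\}$, and up to symmetry ($f\leftrightarrow f^{-1}$) we may assume $j_{0}\ge 2$. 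Transporting this winding along the orbit by $f^{j_{0}}$, the arc $f^{j_{0}}(\gamma)$ issues from the very puncture $f^{j_{0}}(x_{1})$ around which $\gamma$ winds, while its other endpoint $f^{j_{0}+1}(x_{1})$ lies on the opposite side of that winding. The goal is to deduce from this — via a Jordan-curve argument in the plane — that $\underline{f}^{\,j_{0}}(\underline{\gamma})$ and $\underline{\gamma}$ cannot be homotopically disjoint, contradicting fact (1); then $\underline{\gamma}=\underline{\gamma_{0}}$.

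The delicate step — and the one I expect to be the main obstacle — is exactly this last Jordan-curve argument. It is \emph{not} true that a single winding of $\gamma$ around a puncture forces an intersection with one iterate: a simple arc may loop almost entirely around a puncture while an iterate slips past through the remaining gap. To make the contradiction work one must use that \emph{all} the iterates $\underline{f}^{\,n}(\underline{\gamma})$ are simultaneously pairwise homotopically disjoint — equivalently, that the geodesics $g_{n}$ representing them lie disjointly alongside the proper line $L$ of fact~(3) in a combinatorially rigid way — and extract from this global configuration that no essential winding is possible; pinning down $\underline{\gamma}$ from this is where the planar topology (or the geometry of $L$) does the real work, and is the heart of the argument.
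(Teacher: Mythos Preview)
Your setup in facts (1)--(3) is correct and essentially coincides with the paper's own reduction: straighten the geodesic iterates of a classical translation arc $\gamma_{0}$ into the horizontal line $\bbR\times\{0\}$, and further arrange (via property~4 of the appendix) that $f$ sends each $[n,n+1]\times\{0\}$ to $[n+1,n+2]\times\{0\}$. The geodesic representative of any homotopy translation arc $\gamma$ then has both endpoints $0$ and $1$ on this line.

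The gap is exactly where you say it is: you do not carry out the ``heart of the argument,'' and your proposed route through $f^{j_{0}}$ with $j_{0}\ge 2$ and a simultaneous use of all iterates is both vaguer and more complicated than necessary. The paper's actual argument is short and uses only the \emph{first} iterate. Assume the geodesic $\gamma$ is not equal to $[0,1]\times\{0\}$. Then $\gamma$ meets the horizontal line at some interior point; let $\gamma'\subset\gamma$ be the maximal initial sub-arc from $(0,0)$ to a point $(x,0)$ with interior disjoint from the line. Minimal position with the geodesics $[-1,0]\times\{0\}$ and $[0,1]\times\{0\}$ forces $|x|>1$; say $x>1$. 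Since $\Phi f$ (with $\Phi$ as above) preserves orientation and globally preserves the horizontal line, it preserves each open half-plane, and $\Phi f(\gamma')$ is an arc from $(1,0)$ to some $(x',0)$ with $x'>x$, lying in the same half-plane as $\gamma'$. Now $\gamma'\cup([0,x]\times\{0\})$ is a Jordan curve; the arc $\Phi f(\gamma')$ starts at the interior boundary point $(1,0)$, enters the bounded region, and must exit to reach $(x',0)$ --- but it cannot cross the horizontal segment, so it crosses $\gamma'$. Hence $\gamma\cap f_{\sharp}(\gamma)$ contains a point other than $(1,0)$, contradicting that $\gamma$ is a homotopy translation arc.

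So your intuition that ``a single winding need not force an intersection with one iterate'' is misleading here: once the line $L$ is straightened, the very first excursion of $\gamma$ off $L$ already conflicts with $f(\gamma)$, and no higher iterate or global combinatorics is required.
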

The proof is mainly an excuse to begin to play with  the family $\cH$ of hyperbolic geodesics.
We refer to the properties of $\cH$ as listed in the appendix. 

\begin{proof}
Let $\gamma_{0}$ be a homotopy translation arc for $x_{1}$ which is both backward and forward proper, as given by the case $r=1$ of the theorem.
For every $n$, we denote by $f_{\sharp}^n\gamma_{0}$ the unique geodesic homotopic to $f^n(\gamma_{0})$ relative to $\cO$ (property 1 of the appendix). 
For $p \neq q$ the geodesics $f_{\sharp}^p\gamma_{0}$ and $f_{\sharp}^q\gamma_{0}$ are in minimal position (property 2), and since $\gamma_{0}$ is a homotopy translation arc, they must be disjoint (except possibly at their end-points). Since the sequence $(f^n(\gamma_{0}))$ is homotopically proper, the sequence $(f_{\sharp}^n(\gamma_{0}))$ of corresponding geodesics is proper (property 5).
Thus the union
$$
\bigcup_{n \in \bbZ} f_{\sharp}^n(\gamma_{0})
$$
is a properly embedded line: there exists a homeomorphism $\Phi \in \homeo_{0}(\bbR^2)$ sending this line to $\bbR \times \{0\}$, and more precisely we may choose $\Phi$ such that $\Phi(f_{\sharp}^n(\gamma_{0})) = [n,n+1] \times \{0\}$ for every $n \in \bbZ$. Since our problem is invariant under conjugacy, up to replacing $f$ and $x_{1}$ by $\Phi f\Phi^{-1}$ and $\Phi(x_{1})$ (and the family $\cH$ of geodesics by $\Phi(\cH)$), we may assume that $x_{1} = (0,0)$ and $f_{\sharp}^n(\gamma_{0}) = [n,n+1] \times \{0\}$ for every $n$. From now on we work with these hypotheses.

Consider the family $\{f([n,n+1] \times \{0\}), n \in \bbZ\}$. It is locally finite, and its elements are pairwise non-homotopic and disjoint. According to property 3 of the appendix, there exists some $\Phi \in \homeo_{0}(\bbR^2, \cO)$ sending each element of this family to a geodesic (where  $\homeo_{0}(\bbR^2, \cO)$ denotes the identity component in the space of homeomorphism of the plane that pointwise fixe $\cO$; we say that elements of $\homeo_{0}(\bbR^2, \cO)$ are \emph{isotopic to the identity relative to $\cO$}). The curve $f([n,n+1] \times \{0\})$ is homotopic to the geodesic $[n+1,n+2] \times \{0\}$ relative to $\cO$, and since $\Phi$ is isotopic to the identity relative to $\cO$, so is the curve $\Phi f([n,n+1] \times \{0\})$. By uniqueness of the geodesic in a given homotopy class, we deduce that 
$\Phi(f([n,n+1] \times \{0\})) = [n+1,n+2] \times \{0\}$ for every integer $n$. 
Consider another homotopy translation arc $\gamma$ for $f$ at the point $x_{1} = (0,0)$. The arc $\gamma$ is also a homotopy translation arc for the map $\Phi f$. Thus, up to replacing $f$ by $\Phi f$, we may assume that $f([n,n+1] \times \{0\})) = [n+1,n+2] \times \{0\}$ for every $n$. (The reader might be afraid that $\Phi f$ may have some fixed point, whereas $f$ was fixed point free, but we will not use this hypothesis anymore.) 

Now the map $f$ looks very much like the translation $T : (x,y) \to (x+1,y)$, and in a first reading the reader may assume that $f = T$.
We may assume that $\gamma$ is a geodesic (property 1 of the geodesics). If $\gamma$ is not homotopic to $[0, 1] \times {0}$, then $\gamma \not = [0,1] \times {0}$ and we will prove (as a contradiction) that gamma is not a homotopy translation arc
It is enough to prove that the geodesic  $f_{\sharp}(\gamma)$ homotopic to $f(\gamma)$ meets $\gamma$ at some point distinct from $(1,0)$.
For this we consider the two following families of curves:
$$
A = \{[n,n+1] \times \{0\}, n \in \bbZ\}, \ \ \ B = \{f(\gamma)\}. 
$$
These families satisfy the hypothesis of property 4 of the appendix, since $A$ and $\{\gamma\}$ do, and $f(A)=A$. Thus again there exists $\Phi \in \homeo_{0}(\bbR^2, \cO)$ such the image under $\Phi$ of all the curves in both families are geodesics, namely $\Phi([n,n+1] \times \{0\}) = [n,n+1] \times \{0\}$ for every $n$, and $\Phi f(\gamma) = f_{\sharp}(\gamma)$.
Since $\gamma$ is a geodesic distinct from and thus non homotopic to $[0,1] \times \{0\}$, it has to intersect the horizontal line $\bbR \times \{0\}$. Let $\gamma' \subset \gamma$ be the largest subarc containing $\gamma(0) = (0,0)$ and disjoint from this line except at its end-points. The other end-point of $\gamma'$ is on the horizontal line, say $(x,0)$. Since geodesics are in minimal position, this point does not belong to $[-1,1] \times \{0\}$. To fix ideas assume $x >1$. Since $\Phi f$ preserves the orientation and the horizontal line,
$\Phi f (\gamma')$ is an arc from $(1,0)$ to some point $(x',0)$ with $x'>x$ and otherwise disjoint from the line. From this we conclude that $\gamma' \cap \Phi f (\gamma') \neq \emptyset$, and thus $\gamma \cap f_{\sharp}(\gamma)$ contains a point distinct from $(1,0)$. This completes the proof.
\end{proof}

\begin{exercise}
Use the same techniques to prove that, for the Reeb map and $r=2$ (see the second picture), any homotopy translation arc for the point $x_{1}$ is homotopic to the horizontal translation arc drawn on the figure.
\end{exercise}

\subsection{Backward \emph{or} forward proper homotopy translation arcs}
If we consider more than three orbits we cannot in general find homotopy translation arcs that are both backward and forward proper.
However, Handel proved that there always exist homotopy translation arcs that are backward or forward proper. Even more, one can find for each of the $r$ orbits a backward proper homotopy translation arc, and a forward  proper homotopy translation arc, such that all the corresponding ``half homotopy streamlines'' are pairwise disjoint. Here we construct such a family in the special case of a homeomorphism satisfying the hypotheses of the fixed point theorem (see section~\ref{sec.orbit-diagrams} for the general statement).

We work in the same setting as in the previous section: $x_{1}, \dots , x_{r}$ are points having disjoint proper orbits for a homeomorphism $f$ of the plane. We use the same notations. The following property asks for the existence of a family of backward or forward proper homotopy translation arcs, whose associated ``homotopy half-streamlines'' are pairwise homotopically disjoint.

\bigskip

\noindent \textbf{Property $(H'_{1})$} 
\emph{There exists a positive integer $N$ and, for every $i=1 , \dots , r$, an arc
$\delta_{i} \in \cA$ joining $f^{-N-1} x_{i}$ to $f^{-N} x_{i}$, and an arc
$\gamma_{i} \in \cA$ joining $f^{N} x_{i}$ to $f^{N+1} x_{i}$, such that 
\begin{itemize}
\item the $\delta_{i}$'s are backward proper homotopy translation arcs,
\item the $\gamma_{i}$'s are forward proper homotopy translation arcs,
\item all the arcs in the family
$$
\{f^{-n}(\delta_{i}), f^{n}(\gamma_{i}) \ \   \mbox{ with }  i=1 , \dots , r, \ \ n \geq 0\}
$$
   are pairwise homotopically  disjoint.
\end{itemize}
}

\bigskip

\begin{prop}\label{prop.handel-matsumoto}
Let $f$ be a homeomorphism of the disk $\bbD^2$ with no fixed point in the interior.
Assume hypothesis $(H_{1})$ of Handel's fixed point theorem:  $x_{1}, \dots , x_{r}$ are points of the interior of the disk whose $\alpha$ and $\omega$-limit sets are distinct points $\alpha_{1}, \omega_{1}, \dots , \alpha_{r}, \omega_{r}$ on the boundary. Identify the interior of the disk with the plane. Then property $(H'_{1})$ holds for the restriction of $f$ to the interior of the disk.
\end{prop}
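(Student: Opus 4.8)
The plan is to reduce Property $(H'_1)$ to the uniqueness of the homotopy class of translation arcs (Proposition~\ref{prop.translation-arcs}) together with the ``translation arcs near infinity'' construction used in the proof of the $r=1$ case of the theorem above. The point is that the hypothesis $(H_1)$ — namely that each $x_i$ has its $\alpha$-limit at $\alpha_i$ and its $\omega$-limit at $\omega_i$, with all these boundary points pairwise distinct — forces the forward and backward orbits of the various $x_i$ to eventually escape into pairwise disjoint ``corridors'' approaching distinct boundary points. So the strategy is: first produce, for each $i$ and for $n$ large, a translation arc for $f^n(x_i)$ that lives in a small neighbourhood of the boundary point $\omega_i$ (and similarly a translation arc for $f^{-n}(x_i)$ near $\alpha_i$); then glue these geometric pieces along the finitely many intermediate iterates; and finally invoke Proposition~\ref{prop.translation-arcs} to upgrade ``genuinely disjoint translation arcs'' to ``homotopically disjoint homotopy translation arcs''.

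More concretely, I would proceed as follows. Fix, for each boundary point among $\alpha_1,\omega_1,\dots,\alpha_r,\omega_r$, a small closed neighbourhood $U$ in $\bbD^2$; since these $2r$ points are pairwise distinct we may take the $U$'s pairwise disjoint, and we may take them small enough that each $U$ meets $\cO$ only in a tail of a single orbit. Working in the plane picture (interior of the disk), each such $U\setminus\partial\bbD^2$ is an unbounded arcwise connected region whose complement contains any prescribed compact set $K$ once $U$ is small enough. By the exercise at the end of section~\ref{sec.brouwer-theory}.\ref{ssec.translation-arcs} (existence of critical disks inside any neighbourhood of an arc joining a point to its image), exactly as in the proof of the $r=1$ theorem, for $n$ large enough the point $f^{n}(x_i)$ lies in the neighbourhood of $\omega_i$ and admits a (geometric) translation arc $\gamma_{i,n}$ contained in that neighbourhood, hence disjoint from $K$ and from all the other neighbourhoods. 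Choose $N$ large enough that this holds simultaneously for all $i$, for both the forward neighbourhoods of the $\omega_i$ and the backward neighbourhoods of the $\alpha_i$, and such that moreover for $n\ge N$ the iterate $f^{n}(x_i)$ has already entered the neighbourhood of $\omega_i$ and stays there. Set $\gamma_i$ to be a translation arc for $f^{N}(x_i)$ sitting in the neighbourhood of $\omega_i$, and $\delta_i$ a translation arc for $f^{-N-1}(x_i)$ sitting in the neighbourhood of $\alpha_i$.

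Now I claim the family $\{f^{-n}(\delta_i), f^{n}(\gamma_i): i=1,\dots,r,\ n\ge 0\}$ is pairwise homotopically disjoint and that each $\gamma_i$ is forward proper, each $\delta_i$ backward proper. Forward properness of $\gamma_i$ follows verbatim from the argument in the $r=1$ proof: given a compact $K$, for $m$ large $f^{m}(\gamma_i)$ is homotopic relative to $\cO$ (by Proposition~\ref{prop.translation-arcs}, applied to the Brouwer homeomorphism obtained by identifying the interior with the plane — note that $f$ restricted to the interior is fixed point free by hypothesis) to a translation arc for $f^{N+m}(x_i)$ produced inside the shrinking neighbourhood of $\omega_i$, hence disjoint from $K$. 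For the pairwise homotopic disjointness: the iterates $f^{n}(\gamma_i)$ for $n$ large are homotopic to translation arcs lying deep inside the neighbourhood $U_i$ of $\omega_i$, and the $U_i$ together with the neighbourhoods of the $\alpha_j$ are pairwise disjoint, so after replacing each arc by a suitable homotopic representative, arcs attached to different ends are genuinely disjoint except possibly at endpoints, which themselves are disjoint since the $2r$ limit points are distinct; for two arcs attached to the same end (say $f^{n}(\gamma_i)$ and $f^{n'}(\gamma_i)$ with $n\ne n'$) homotopic disjointness is exactly the statement that $\gamma_i$ is a homotopy translation arc, which in turn follows from the Corollary on translation arcs in section~\ref{ssec.translation-arcs} (a genuine translation arc meets its iterates only for $n=-1,0,1$) together with Proposition~\ref{prop.translation-arcs}. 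The finitely many ``short-range'' pairs — those involving small $n$, where the arcs may wander through the compact core — are handled by choosing $N$ large enough that $f^{\pm N}(x_i)$ and all later iterates are already confined to the disjoint neighbourhoods, so that for $n\ge 0$ every arc $f^{n}(\gamma_i)$ has at least one endpoint, hence (by minimal-position / geodesic straightening, cf. the appendix) a representative, inside $U_i$ away from the core.

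The main obstacle I anticipate is the bookkeeping that makes ``eventually confined to disjoint corridors'' precise and uniform in $i$ and $n$: one must choose the boundary neighbourhoods, the threshold $N$, and the homotopy representatives in a mutually compatible way, and in particular verify that the homotopy representatives witnessing disjointness of one pair can be chosen consistently with those for all other pairs — this is where passing to hyperbolic geodesics (properties 1–5 of the appendix) is the clean device, since geodesics in a given homotopy class are canonical, automatically in minimal position, and form a locally finite family, so ``pairwise homotopically disjoint'' for the whole infinite family reduces to ``pairwise disjoint geodesics'', which one checks end-by-end using the disjointness of the neighbourhoods of the $2r$ limit points. The dynamical input of $(H_1)$ enters only to guarantee that the orbits do separate into these corridors; everything else is the planar Brouwer machinery already assembled in sections~\ref{sec.brouwer-theory} and~\ref{sec.hta}.
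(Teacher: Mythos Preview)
Your overall strategy matches the paper's: construct geometric translation arcs $\gamma_i,\delta_i$ inside small pairwise disjoint neighbourhoods of the $2r$ boundary points, and then invoke Proposition~\ref{prop.translation-arcs} to control the iterates. But there is a genuine gap at the crucial step.

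Proposition~\ref{prop.translation-arcs} only asserts that two translation arcs for the same point $x$ are homotopic \emph{relative to the single orbit $\cO(x)$}, not relative to the full set $\cO=\cO(x_1,\dots,x_r)$. When you write ``$f^{m}(\gamma_i)$ is homotopic relative to $\cO$ (by Proposition~\ref{prop.translation-arcs}\dots) to a translation arc for $f^{N+m}(x_i)$'', you are claiming more than that proposition delivers: the homotopy it produces may well sweep across points of the other orbits $\cO(x_j)$, $j\neq i$. Everything downstream in your argument --- forward properness, pairwise homotopic disjointness, the handling of ``short-range'' pairs via geodesic straightening --- relies on having representatives of $f^n(\gamma_i)$ that live inside the neighbourhood $U_i$ \emph{up to homotopy relative to $\cO$}, and this is precisely what is not yet established.

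The paper fills this gap with a short but essential observation that you are missing. Having chosen the neighbourhood $W_i$ of $\omega_i$ so that $W_i\cap\cO\subset\cO(x_i)$, one notes: if two arcs lie entirely in $W_i$ and are homotopic relative to $\cO(x_i)$, then they are automatically homotopic relative to $\cO$. The reason is that there is a continuous retraction $\Phi:\bbR^2\to W_i$ fixing $W_i$ pointwise; composing the given homotopy with $\Phi$ forces it to stay inside $W_i$, hence away from all points of $\cO\setminus\cO(x_i)$. With this in hand, one argues inductively: $f(\gamma(i,N))$ and $\gamma(i,N+1)$ are both translation arcs for $f^{N+1}(x_i)$ lying in $W_i$, so Proposition~\ref{prop.translation-arcs} plus the retraction trick gives that they are homotopic relative to $\cO$; iterating, $f^n(\gamma_i)$ is homotopic relative to $\cO$ to the geometric arc $\gamma(i,N+n)\subset W_i$ for every $n\ge 0$. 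Once this is established, the pairwise disjointness and properness are immediate from the pairwise disjointness of the neighbourhoods --- no appeal to hyperbolic geodesics is needed.
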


\begin{proof}
The idea is to define the $\delta_{i}$'s and the $\gamma_{i}$'s as geometrical translation arc for the euclidean metric on the disk, and to use the uniqueness of the homotopy class of translation arcs (proposition~\ref{prop.translation-arcs}) to prove homotopic disjointness.

\noindent \begin{minipage}[b]{0.6\linewidth}
The hypothesis allows to choose two collections $A_{1}, \dots, A_{r}$ and $W_{1}, \dots , W_{r}$ of pairwise disjoint neighborhoods of the points $\alpha_{1}, \omega_{1}, \dots , \alpha_{r}, \omega_{r}$, such that each $A_{i}, W_{i}$ is disjoint from the orbits of the $x_{j}$'s for $j \neq i$. More precisely, 
we construct $A_{i}$ (resp. $W_{i}$) as the intersection of a small disk centered at $\alpha_{i}$ (resp. $\omega_{i}$) with the unit disk $\bbD^2$, paying attention that the boundaries of $A_{i}$ and $W_{i}$ do not contain any point of $\cO$.
 Let $B(i,n)$ be the closed euclidean disk centered at $f^n(x_{i})$ and critical for $f$:
 \end{minipage}
\noindent \begin{minipage}[b]{0.4\linewidth}
\begin{center}
\fbox{
\begin{tikzpicture}[scale=0.85]
\draw (0,0) circle (2);
\foreach \r in {3}
{
\foreach \i in {1,...,\r}
{\draw [thick, dotted, >=latex,->] (\i*360/\r:2) -- (\i*360/\r+480/\r:2) ;
\draw (\i*360/\r:2.4) node {$A_{\i}$};
\draw  (\i*360/\r+480/\r:2.4) node {$W_{\i}$};
\begin{scope}
\pgfsetfillopacity{0.1} 
\draw [clip] (0,0) circle (2);
\draw [fill] (\i*360/\r:2) circle (0.35); 
\draw [fill] (\i*360/\r+480/\r:2) circle (0.35);
\end{scope}}}
\end{tikzpicture} 
}
\end{center}
\end{minipage}
$$
f(B(i,n)) \cap B(i,n) \neq \emptyset \mbox{ but }
f(\inte B(i,n)) \cap \inte B(i,n) = \emptyset.
$$
For $n\geq0$ large enough, $B(i,n)$ in included in $W_{i}$, and so is its image under $f$. In particular, any geometric translation arc $\gamma(i,n)$ for $f^n(x_{i})$, as constructed in the previous section is included in $W_{i}$. Likewise, $B(i,-n-1)$ and its image are included in $A_{i}$, and so is any geometrical translation arc  $\delta(i,n)$ for $f^{-n-1}(x_{i})$.

From now on we work in the interior of the unit disk, identified with the plane.
Let $\gamma,\gamma'\in \cA$ be two simple arcs included in $A_{1}$. Assume that they are homotopic relative to the orbit $\cO(x_{1})$ of $x_{1}$. Then we observe that they are also homotopic relative to $\cO$.
Indeed, there exists a map $\Phi: \bbR^2 \to A_{1}$ which pointwise fixes $A_{1}$ and send the complement of $A_{1}$ to $\partial A_{1}$, and the composition of a homotopy avoiding $\cO(x_{1})$ with $\Phi$ gives a homotopy avoiding $\cO$. The same observation of course holds for all the $A_{i}$'s and $W_{i}$'s.

Now we choose $N > 0$ large enough so that for every $n \geq N$ and every $i$, the translation arc $\gamma(i,n)$ and its image are both included in $W_{i}$, and likewise 
the  $\delta(i,n), f^{-1}(\delta(i,n))$ are included in $A_{i}$. We set  $\gamma_{i} = \gamma(i,N)$ and $\delta_{i} = \delta(i,N)$ and claim that they suit our needs.

According to the proposition on homotopy classes of translation arcs, the arcs $\gamma(i,N+1)$ and $f(\gamma(i,N))$ are homotopic relative to $\cO(x_{i})$. Applying the above observation, we deduce that they are homotopic relative to $\cO$. By induction we see that the arc $f^n(\gamma_{i})$ is homotopic relative to $\cO$ to the arc $\gamma(i,N+n)$. Like wise the arc $f^{-n}(\delta_{i})$ is homotopic relative to $\cO$ to the arc $\delta(i,-N-n)$.
Thus property $(H'_{1})$ is satisfied.
\end{proof}

\bigskip
\bigskip
\bigskip
\section{Proof of the fixed point theorem}
In this section, we prove the intrinsic version of the theorem. The proof follows closely the exposition given by Matsumoto in~\cite{matsumoto2000arnold}.
Here the use of hyperbolic geodesics is crucial (see the appendix).

Again, consider an orientation preserving homeomorphism $f$ of the plane, with $r$ proper disjoint orbits $\cO(x_{1}), \dots, \cO(x_{r})$.
Assume property $(H'_{1})$. We want to translate in this setting hypothesis $(H_{2})$ concerning the cyclic order (see the statement of the fixed point theorem).

For a curve $\alpha \in \cA$, we will denote by $f_{\sharp}^n \alpha$ the unique geodesic in the homotopy class of $f^n(\alpha)$. Note that for any $p,q$ we have $f_{\sharp}^q f^p(\alpha) = f_{\sharp}^{p+q}\alpha$. We also define 
$$
S^-(\alpha) = \cup_{n \leq 0} f_{\sharp}^n\alpha, \ \ \ S^+(\alpha) = \cup_{n \geq 0} f_{\sharp}^n\alpha.
$$
Hypothesis $(H'_{1})$ amounts to saying that the curves
$$
S^-(\delta_{1}), S^+(\gamma_{1}), \dots , S^-(\delta_{r}), S^+(\gamma_{r}).
$$
are pairwise disjoint and homeomorphic to half-lines.
We also let 
$$
S^-= \cup_{i} S^-(\delta_{i}), \ \ \ S^+=\cup_{i} S^+(\gamma_{i}).
$$


As we did for flows (see the beginning of the section about classical Brouwer theory), we may consider the cyclic order at infinity. More precisely, in every neighborhood of infinity, that is, outside every compact subset of the plane, there exists a Jordan curve (a topological circle) $J$ meeting each of the $2r$ half-lines exactly once. The cyclic order induced by $J$ on the finite set $J \cap (S^- \cup S^+)$ does not depend on $J$, and thus we get a well defined cyclic order on our set of $2r$ half-lines. 
Denote by $\alpha_{i}$ the point $J \cap S^-(\delta_{i})$ and by $\omega_{i}$ the point $J \cap S^+(\gamma_{i})$.
We introduce hypothesis $(H'_{2})$ which says that the cyclic order on $J$ is the same as the order given on the circle boundary in hypothesis $(H_{2})$.

In view of proposition~\ref{prop.handel-matsumoto}, the fixed point theorem is a consequence of the following intrinsic statement.
\begin{theo*}[intrinsic version of Handel's fixed point theorem]
Let $f$ be an orientation preserving homeomorphism of the plane satisfying properties $(H'_{1})$ and $(H'_{2})$. Then $f$ has a fixed point.
\end{theo*}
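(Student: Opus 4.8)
The strategy is to argue by contradiction: assume $f$ has no fixed point, so that $f$ is a Brouwer homeomorphism, and derive a contradiction with the cyclic order prescribed by $(H'_2)$. The backward and forward proper homotopy translation arcs provided by $(H'_1)$ assemble into $2r$ pairwise disjoint properly embedded half-lines $S^-(\delta_i)$ and $S^+(\gamma_i)$, together with $2r$ ``middle'' arcs $f^{-N}(\delta_i),\dots,f^{N}(\gamma_i)$ connecting $S^-(\delta_i)$ to $S^+(\gamma_i)$ through the orbit of $x_i$. Replacing everything by its geodesic representative (property 1 of the appendix) and using that geodesics in minimal position that are homotopically disjoint are actually disjoint (property 2), we may assume each $S^-(\delta_i)\cup(\text{middle arcs})\cup S^+(\gamma_i)$ is a properly embedded line $L_i$, and the $L_i$ are pairwise disjoint except that we must be careful about intersections between different $L_i$'s coming from the middle arcs (these can be made disjoint by general position since the orbits are disjoint and proper). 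So we get $r$ disjoint properly embedded oriented lines $L_1,\dots,L_r$ in the plane whose cyclic order at infinity, read off via a large Jordan curve $J$, is exactly $\alpha_1,\omega_r,\alpha_2,\omega_1,\dots,\alpha_r,\omega_{r-1}$ as in $(H_2)$/$(H'_2)$.

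The heart of the argument is then to exploit the $f$-equivariance of this configuration. The key structural fact is that $f$ sends $S^+(\gamma_i)$ into itself up to a ``shift'' (since $f(f^n(\gamma_i)) = f^{n+1}(\gamma_i)$, applying $f_\sharp$ shifts the indexing of the geodesic pieces by one), and likewise $f^{-1}$ sends $S^-(\delta_i)$ into itself up to a shift. More precisely, $f_\sharp(S^+(\gamma_i)) \subset S^+(\gamma_i)$ and $f_\sharp^{-1}(S^-(\delta_i))\subset S^-(\delta_i)$ in the homotopy sense, so up to isotopy relative to $\cO$ we can arrange that $f$ genuinely maps each forward half-streamline into itself, absorbing the first geodesic segment. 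Now consider the region bounded by $L_i$ and $L_{i+1}$ (indices cyclically): the cyclic order in $(H'_2)$ says these lines ``interleave'' at infinity in the specific pattern $\alpha_i,\omega_{i-1},\alpha_{i+1},\omega_i$. One builds a Brouwer line or a domain whose image under $f$ is strictly nested, and applies the free disk lemma together with the Brouwer plane translation theorem to get a contradiction — essentially, the combinatorics of the cyclic order forces some bounded region to be carried strictly inside itself (or a ``Brouwer line'' to be crossed in a forbidden way), which is incompatible with fixed-point-freeness via the Brouwer fixed point theorem, exactly as in the proof of the claim $C\cap f^{-1}(C)=\{x_0\}$ earlier in the paper.

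Concretely, I would follow Matsumoto's route: use the lines $S^-(\delta_i), S^+(\gamma_i)$ to cut the plane into $2r$ sectors; track how $f$ permutes or shifts these sectors using $(H'_1)$ (forward streamlines are $f$-forward-invariant, backward ones are $f$-backward-invariant); and observe that $(H'_2)$ forces the existence of a sector $U$, say bounded by $S^-(\delta_i)$ on one side and $S^+(\gamma_{i})$ on the other with $\omega_{i-1}$ or $\alpha_{i+1}$ sitting ``across'' at infinity, such that $f(\overline U)\subset U$ or $f^{-1}(\overline U)\subset U$. Then $\bigcap_{n\geq 0} f^n(\overline U)$ (resp. the union) is a nonempty closed $f$-invariant set on which, after passing to the prime-end or Schoenflies picture, Brouwer's theorem yields a fixed point — contradiction. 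The main obstacle is precisely this step: making rigorous the claim that the interleaving cyclic order of $(H'_2)$ forces a strictly nested sector, since one must control the behavior of $f$ on the ``middle'' part of each $L_i$ (near the finitely many orbit points $f^{-N}x_i,\dots,f^{N+1}x_i$) where $f$ is not simply a shift, and one must verify that the geodesic straightening does not destroy the disjointness of the $2r$ half-lines. Handling this carefully — using that homotopy translation arcs are homotopically disjoint from their iterates, property 4 of the appendix to simultaneously straighten finite families, and the free disk lemma to control the middle arcs — is where the real work lies; the final invocation of Brouwer's theorem is then formal.
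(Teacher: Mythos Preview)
Your proposal contains a fundamental error at the very first step. You claim that by joining each pair $S^-(\delta_i),\,S^+(\gamma_i)$ with ``middle arcs'' one obtains $r$ pairwise \emph{disjoint} properly embedded lines $L_1,\dots,L_r$ whose endpoints at infinity realise the cyclic order of $(H'_2)$. But this is self-contradictory: pairwise disjoint properly embedded lines in the plane correspond (via Schoenflies) to pairwise \emph{non-crossing} chords of the disk, whereas the cyclic order $\alpha_1,\omega_r,\alpha_2,\omega_1,\dots,\alpha_r,\omega_{r-1}$ is precisely the maximally crossing configuration --- every chord $[\alpha_i,\omega_i]$ meets every other one. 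So no such disjoint family $(L_i)$ can exist. The sentence ``these can be made disjoint by general position since the orbits are disjoint and proper'' is where the argument breaks: you can make the middle arcs disjoint from one another, but you \emph{cannot} make the middle arc for orbit $i$ disjoint from the half-streamlines $S^\pm$ of the other orbits. This is not a technicality to be fixed by careful geodesic straightening; it is a topological obstruction, and indeed it is exactly the phenomenon that the theorem is detecting.

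Because the disjoint $L_i$ do not exist, the subsequent ``sector'' picture collapses: there are no $2r$ well-defined sectors cut out by the $L_i$, and the claim that some sector satisfies $f(\overline U)\subset U$ has no footing. The paper's proof proceeds along an entirely different route and never attempts to build invariant lines. Instead it introduces the map $\mathrm{cut}:\underline{\cG}\to\oplus\underline{\cG}_0$ that chops the geodesic representative of a curve along $S^+$; proves the key identity $(\mathrm{cut}\circ\underline f)^n=\mathrm{cut}\circ\underline f^{\,n}$; shows (Proposition~\ref{prop.fixed-point}) that if some arc $\alpha\in\cG_0$ satisfies $-\underline\alpha\in\mathrm{cut}\,\underline f^{\,n}(\underline\alpha)$ then a suitable lift of $f^n$ to the hyperbolic disk $\bbH^2$ acts without fixed points on $\partial\bbH^2$, whence Brouwer's theorem on the closed disk yields a periodic point and then a fixed point; and finally constructs, using $(H'_2)$, a finite \emph{fitted family} $T\subset\underline{\cG}_0$ on which a combinatorial graph argument produces such an $\alpha$. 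The hypothesis $(H'_2)$ enters not to build disjoint lines but to guarantee that $T$ is nonempty and that iteration under $\mathrm{cut}\circ\underline f$ eventually doubles some element, forcing $-\underline\alpha$ to appear.
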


The end of this section is devoted to the proof of this theorem.

\subsection{Action of $f$ on curves}
For the present we only assume that $f$ is an orientation preserving homeomorphism of the plane, with or without fixed points, satisfying hypothesis $(H'_{1})$.
We use the notations of section~\ref{sec.hta}.

We define the following subset
 $\cG$ of $\cA$. A curve $\alpha \in \cA$ is in $\cG$ if its end points belong to the set
  $$
  S^+ \cap \cO = \{f^{n}(x_{i}), n \geq N, i\in \{0, \dots , r\}\}
  $$
   and $\alpha$ is homotopically disjoint from every arc $f^n(\delta_{i}), n \leq 0$.
Note that $\cG$ is a union of homotopy classes in $\cA$ and that $f(\cG) \subset \cG$. 
Let $\cG_{0}\subset \cG$ be the set of elements of $\cG$ whose end points belong to the smaller set
$$
\{f^N(x_{1}), \dots , f^N(x_{r})\}
$$
and that are also homotopically disjoint from every arc $f^n(\gamma_{i}), n \geq 0, i=1, \dots, r$.

Obviously $f$ induces a natural map, denoted by $\underline{f}$, from $\underline{\cG}$ to itself, where $\underline{\cG}$ denotes the set of homotopy classes of elements of $\cG$.
We will also associate to $f$ a natural map from $\cG_{0}$ to itself. There is no obvious way to do this, mainly because the image of a curve $\alpha \in \cG_{0}$ may meet $S^+$. The idea is to cut $f(\alpha)$ into pieces that do not meet $S^+$ anymore, at least up to homotopy. Thus we will not obtain a genuine map but rather a  multi-valued map from $\underline{\cG}_{0}$ to itself.
More precisely, if all the curves involved are assumed to be pairwise in minimal position, then the process will be to take all the connected components of $f(\alpha)\setminus S^+$, and to extend them by the most direct way so that their end-points belong to $\{f^N(x_{1}), \dots f^N(x_{r})\}$. The result will be a ``set'' of curves in $\cG_{0}$ counted with multiplicities; for this we introduce the following notation.
For every set $E$, let $\oplus E$ denote the set ``finite subsets of $E$ with multiplicities''. More formally, an element of  $\oplus E$ is a  map $\phi$ from $E$ to $\bbN$
such that $\phi(e) = 0$ except for a finite number of $e\in E$. An element of $\oplus E$ may be denoted either by a formal sum $\phi = \alpha_{1} + \cdots + \alpha_{\ell}$, or (abusing notation) by a ``set'' $\{\alpha_{1}, \dots \alpha_{\ell}\}$ where the $\alpha_{i}$'s are not assumed to be distinct. The empty sum (or empty set) is denoted either by $0$ or $\emptyset$. We will write $\alpha \in \phi$ to denote $\phi(\alpha) \neq 0$.

Now consider some $\underline{\alpha} \in \underline{\cG}$, and define $\mathrm{cut}(\underline{\alpha}) \in \oplus\underline{\cG}_{0}$ as follows\footnote{Note that in \cite{handel1999fixed} the construction is slightly different. Our set $\cG_{0}$ is in one-to-one correspondance with the set which is denoted in~\cite{handel1999fixed} by $RH(W, \partial_{+}W)$, and Handel's map $f_{\sharp}(.) \cap W$ corresponds to our map $\mathrm{cut} \circ \underline{f}$.}.
In the case when $\alpha$ is isotopic to one of the geodesics $f_{\sharp}^n\gamma_{i}, n\geq0$ that make up $S^+$, we define  $\mathrm{cut}(\underline{\alpha})$ to be the empty set. In the opposite case, let $\alpha'$ be  the unique geodesic  homotopic to $\alpha$. According to the appendix, $\alpha'$ is in minimal position with all the geodesics $f_{\sharp}^n\gamma_{i}$. In particular  the set $B$ of connected components of $\alpha' \setminus S^+$ is finite. 
Let $\overline{\beta}$ be the closure of some element in $\beta \in B$, we consider $\overline{\beta}$ as an oriented simple curve parametrized by $[0,1]$, which connect some $S^+(\gamma_{i_{0}})$ to some $S^+(\gamma_{i_{1}})$.  We define a curve $\beta'$ by first following the half-line $S^+(\gamma_{i_{0}})$ from  $f^N(x_{i_{0}})$ to  $\overline{\beta}(0)$, then following $\overline{\beta}$, and finally following the half-line $S^+(\gamma_{i_{1}})$  from $\overline{\beta}(1)$ to  $f^N(x_{i_{1}})$. The curve $\beta'$ is then ``pushed off $S^+$'' to get a curve $\beta''$ which is disjoint from $S^+$ except at its end-points. 
The process from $\beta$ to $\beta''$ is described on the following picture.

\begin{center}
\fbox{
\begin{tikzpicture}[scale=1]
\tikzstyle{fleche}=[>=latex,->, thick]
\draw [fleche] (5,0) -- (0,0) ; 
\draw [fleche] (5,2) -- (0,2) ;
\foreach \k in {1,3,5} 
	{\draw (\k,0) node {$\bullet$} ; \draw (\k,2) node {$\bullet$} ; }

\draw (4,0)  	..controls +(0,1)  and +(0,-1).. (2,2);

\draw [very thick,dashed]  (5,0) -- (4+0.1,0+0.1)  	
			..controls +(0,1)  and +(0,-0.8).. (2+0.1,2-0.1)
			-- (5,2);

\draw (2,1) node {$\beta$}; 
\draw (4,1) node {$\beta''$}; 
\draw (1,2.5) node {$S^+(\gamma_{i_{1}})$}; 
\draw (1,-0.5) node {$S^+(\gamma_{i_{0}})$};

\draw (0,-1) node {} ;
\draw (0,3) node {} ;
\end{tikzpicture} 

Construction of $\beta''$
}
\end{center}
It may happens that $\beta''$ is inessential (recall that this means it is a closed curve surrounding no point of $\cO$).
In this case we decide that $\beta''$ is the zero element in $\oplus \underline{\cG}_{0}$. In the opposite case $\beta''$ is an element of $\cG_{0}$.
Finally, we let
$$
\mathrm{cut}(\underline{\alpha})  = \sum_{\beta \in B} \underline{\beta}''.
$$

\begin{center}
\fbox{
\begin{tikzpicture}[scale=1]
\tikzstyle{fleche}=[>=latex,->, thick]
\draw [fleche] (5,0) -- (0,0) ; 
\draw [fleche] (5,2) -- (0,2) ;
\foreach \k in {1,3,5} 
	{\draw (\k,0) node {$\bullet$} ; \draw (\k,2) node {$\bullet$} ; }
\draw (4,-1) node {$\bullet$};
\draw (5,1) node {$\bullet$};

\draw 	(3,0) 	..controls +(0,-1)  and +(-1,0)..
		(4,-1.5)	..controls +(0.3,0)  and +(0,-0.3)..
		(4.5,-1)	..controls +(0,0.5)  and +(0,-1)..
		(4,0)  	..controls +(0,1)  and +(0,-1)..
		(2,2)		..controls +(0,1)  and +(0,1)..
		(5,2);

\draw [very thick,dashed]  (5,0) ..controls +(-0.5,0.5)  and +(-0.5,-0.5)..   (5,2);
\draw [very thick,dashed]  (5,0) ..controls +(-1.2,-0.5)  and +(-0.3,0.3)..
					(3.8,-1.2) ..controls +(0.3,-0.3)  and +(-0.5,-1.2)..   (5,0);

\draw (2.5,-1) node {$\beta_{1}$}; 
\draw (2.4,1) node {$\beta_{2}$}; 
\draw (2.3,3) node {$\beta_{3}$}; 

\draw (5.2,-0.6) node {$\underline{\beta''_{1}}$}; 
\draw (4.3,1) node {$\underline{\beta''_{2}}$}; 
\draw (5,3) node {$(\underline{\beta''_{3}} = 0)$}; 

\draw (0.2,2.5) node {$S^+$}; 
\end{tikzpicture} 
}

An example : $\mathrm{cut}(\alpha) = \underline{\beta''_{1}} + \underline{\beta''_{2}}$
\end{center}

For future use we make the following remark. Imagine that at the beginning of the above construction we replace the geodesic $\alpha'$ by any curve wich is homotopic to $\alpha$ and in minimal position with all the $f_{\sharp}^n\gamma_{i}$ with $n \geq0$. Due to properties of the geodesics, such a curve is the image of  $\alpha'$ under some $\Phi \in \mathrm{Homeo}_{0}(\bbR^2, \cO)$ which leaves every  geodesic $f_{\sharp}^n\gamma_{i}$ globally invariant (apply property 4 of the appendix on geodesics). Then if we apply the construction with $\Phi(\alpha')$ instead of $\alpha'$, we will get the curves $\Phi(\beta'')$ instead of $\beta''$. Since each $\Phi(\beta'')$ is homotopic to $\beta''$ relative to $\cO$, we see that this will not change the definition of $\mathrm{cut}(\underline{\alpha})$.

\newpage
\begin{exercise}
Prove that this definition does not depend on the choice of the hyperbolic structure: if $\cH_{0}, \cH_{1}$ are two families of curves satisfying the axioms of geodesics listed in the appendix, then the maps $\mathrm{cut}_{0}$ and $\mathrm{cut}_{1}$ defined using respectively  $\cH_{0}, \cH_{1}$ coincide. Hint: use again property 4 in the list of axioms.
\end{exercise}

Thus we have well-defined maps $\underline{f}: \underline{\cG} \to \underline{\cG}$ and  $\mathrm{cut} : \underline{\cG} \to \oplus \underline{\cG}_{0}$.
We still denote by $\underline{f}:\oplus \underline{\cG} \to \oplus \underline{\cG}$ and  $\mathrm{cut} : \oplus \underline{\cG} \to \oplus \underline{\cG}_{0}$ the natural extensions.


\bigskip

\begin{lemm}~

\begin{enumerate}
\item $\mathrm{cut}\circ \mathrm{cut} = \mathrm{cut}$.
\item Let  $\alpha_{1}, \alpha_{2} \in {\cG}$ be homotopically disjoint. Then every $\beta_{1} \in \mathrm{cut}(\underline{\alpha_{1}}),   \beta_{2} \in \mathrm{cut}(\underline{\alpha_{2}})$ are homotopically disjoint.
\item The map $\underline{f}$ depends only on the homotopy class of $f$ relative to $\cO$: if $\Phi$ is any element in $\mathrm{Homeo}_{0}(\bbR^2, \cO)$ then $\underline{\Phi f} = \underline{f}$.
\item For every integer $n\geq 0$, the equality 
$$
(\mathrm{cut} \circ \underline{f})^n = \mathrm{cut} \circ \underline{f}^n
$$
holds on $\oplus \underline{\cG}$.
\end{enumerate}
\end{lemm}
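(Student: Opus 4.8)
The four assertions are of increasing sophistication; I would prove them in order, reusing each one in the next.

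\emph{Assertion 1 (idempotence of $\mathrm{cut}$).} The key observation is that the construction of $\mathrm{cut}$ only ever \emph{adds} to a curve pieces of the half-lines $S^+(\gamma_i)$ followed by a push-off. Hence if $\underline{\alpha}$ is already an element of $\underline{\cG}_0$, its geodesic representative $\alpha'$ is in minimal position with every $f_\sharp^n\gamma_i$ \emph{and} is homotopically disjoint from all of them (that is exactly the defining property of $\cG_0$), so $\alpha'\cap S^+$ consists only of the two end-points. Therefore $B$ has a single component, whose closure is all of $\alpha'$, and the curve $\beta''$ produced from it is homotopic to $\alpha'$ relative to $\cO$; if $\alpha'$ were isotopic to one of the $f_\sharp^n\gamma_i$ it would have been declared $0$, but again that is excluded on $\cG_0$. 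Thus $\mathrm{cut}(\underline{\alpha}) = \underline{\alpha}$ for $\underline{\alpha}\in\underline{\cG}_0$. The general identity $\mathrm{cut}\circ\mathrm{cut}=\mathrm{cut}$ follows because $\mathrm{cut}(\underline{\alpha})$ is by definition a sum of elements of $\underline{\cG}_0$, on which $\mathrm{cut}$ is the identity, and $\mathrm{cut}$ extends additively.

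\emph{Assertion 2 (cutting preserves homotopic disjointness).} Since $\underline{\alpha_1}\cap\underline{\alpha_2}=\emptyset$ and the geodesics in a hyperbolic structure realize minimal position simultaneously (property 4 of the appendix, applied to the finite locally finite family consisting of $\alpha_1,\alpha_2$ and all the $f_\sharp^n\gamma_i$, $n\geq 0$), I may represent $\alpha_1,\alpha_2$ by their geodesics $\alpha_1',\alpha_2'$, which are then disjoint except possibly at end-points, and simultaneously in minimal position with $S^+$. Then every component $\beta_1$ of $\alpha_1'\setminus S^+$ is disjoint from every component $\beta_2$ of $\alpha_2'\setminus S^+$, the end-points being handled by the disjointness of the half-lines $S^+(\gamma_i)$ (hypothesis $(H'_1)$). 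The push-offs $\beta_1'',\beta_2''$ can be performed consistently in a small neighbourhood of $S^+$ so as to stay disjoint: near a half-line $S^+(\gamma_i)$ the finitely many strands being pushed off can be ordered and pushed to one fixed side without creating crossings. The resulting $\beta_1'',\beta_2''$ are then literally disjoint except at their common end-points in $\{f^N x_1,\dots,f^N x_r\}$, whence homotopically disjoint — unless they happen to be homotopic, but in that case they were obtained from disjoint strands on the same side of $S^+$ and one checks, using that the original geodesics were disjoint, that this forces the homotopy class to be inessential, i.e.\ the zero element, and there is nothing to prove.

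\emph{Assertion 3 ($\underline{f}$ is a relative-homotopy invariant).} If $\Phi\in\mathrm{Homeo}_0(\bbR^2,\cO)$, pick an isotopy $(\Phi_t)$ from the identity to $\Phi$ through homeomorphisms fixing $\cO$ pointwise. Then for any $\alpha\in\cG$ the curves $\Phi_t f(\alpha)$ form a homotopy relative to $\cO$ from $f(\alpha)$ to $\Phi f(\alpha)$, and each intermediate curve has the same end-points in $\cO$; one must only check the homotopy stays inside $\cG$, i.e.\ that $\Phi_t f(\alpha)$ remains homotopically disjoint from every $f^n(\delta_i)$, $n\leq 0$ — but $\Phi_t$ preserves $S^-$ up to isotopy relative to $\cO$ (being isotopic to the identity rel $\cO$), so this is automatic. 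Hence $\underline{\Phi f(\alpha)}=\underline{f(\alpha)}$ for every $\alpha$, i.e.\ $\underline{\Phi f}=\underline{f}$.

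\emph{Assertion 4 (the cocycle-type identity).} This is the main point, and I expect it to be the hardest. We must show $(\mathrm{cut}\circ\underline{f})^n = \mathrm{cut}\circ\underline{f}^n$ on $\oplus\underline{\cG}$. By induction it suffices to prove the single step
$$
\mathrm{cut}\circ\underline{f}\circ\mathrm{cut}\;=\;\mathrm{cut}\circ\underline{f}
$$
on $\underline{\cG}$, since then $(\mathrm{cut}\,\underline{f})^{n}=(\mathrm{cut}\,\underline{f})^{n-1}(\mathrm{cut}\,\underline{f})=\mathrm{cut}\,\underline{f}^{\,n-1}\,\mathrm{cut}\,\underline{f}=\mathrm{cut}\,\underline{f}^{\,n-1}\underline{f}=\mathrm{cut}\,\underline{f}^{\,n}$ using Assertion 1 in the form $\mathrm{cut}=\mathrm{cut}\circ\mathrm{cut}$ and the inductive hypothesis. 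So the crux is: cutting $f(\alpha)$ all at once gives the same multiset (with multiplicities!) as first cutting $\alpha$, then applying $f$, then cutting again. To see this, take $\alpha$ a geodesic, so $\mathrm{cut}(\underline{\alpha})=\sum_{\beta\in B}\underline{\beta''}$ with $B$ the components of $\alpha\setminus S^+$. The curve $f(\alpha)$ meets $S^+$ in two kinds of points: the images under $f$ of $\alpha\cap S^+$, and — because $f(S^+)\neq S^+$ in general — the intersections of $f(\alpha)$ with $S^+$ coming from $f$ moving the half-lines. The forward-properness of the $\gamma_i$ and the disjointness in $(H'_1)$ are what control this: after choosing a representative of $f(\alpha)$ in minimal position with all $f_\sharp^n\gamma_i$, one checks that the components of $f(\alpha)\setminus S^+$ are, up to homotopy rel $\cO$ and up to the push-off operation, in bijection — preserving multiplicity — with the $f$-images of the components $\beta''$ of the already-cut $\alpha$, further cut by $S^+$. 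The bookkeeping device is the remark recorded just before the lemma in the excerpt: $\mathrm{cut}$ is unchanged if we replace a geodesic by any curve in minimal position obtained from it by a homeomorphism in $\mathrm{Homeo}_0(\bbR^2,\cO)$ fixing each $f_\sharp^n\gamma_i$ globally. Using this freedom, I would homotope $f(\alpha)$ so that it agrees with $f(\alpha')$ outside a neighbourhood of $S^+$ where $\alpha'$ denotes the curve $\sum\beta'$ (the $\beta$'s re-extended along $S^+$ before push-off), making the comparison of the two multisets a local matter near $S^+$. The subtle part is the handling of components $\beta''$ that become inessential (the ``$=0$'' case) and the verification that no extra components are created or destroyed under $f$ once we have pushed everything into minimal position — this is precisely where one uses that the half-lines $S^+(\gamma_i)$ are forward proper, so that $f(\alpha')$ meets only finitely many of the geodesics $f_\sharp^n\gamma_i$ and the combinatorics is finite. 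I expect this intersection-pattern bookkeeping, together with the multiplicity count, to be the only genuinely delicate step; Assertions 1–3 are essentially formal consequences of the definitions and of the appendix properties of geodesics.
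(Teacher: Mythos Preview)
Your treatment of Assertions 1--3 is essentially the paper's, with one caveat: in Assertion~2, the case where $\beta_1''$ and $\beta_2''$ turn out to be homotopic does \emph{not} force them to be inessential (two disjoint essential arcs can perfectly well be homotopic). The paper simply glosses over this, and in applications one only ever uses ``distinct elements of $\mathrm{cut}$ are homotopically disjoint,'' so this is a cosmetic issue, not a real gap.

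The genuine gap is in Assertion~4. You correctly reduce to $\mathrm{cut}\,\underline{f}\,\mathrm{cut}=\mathrm{cut}\,\underline{f}$ and correctly identify that the remark before the lemma lets you replace geodesics by other curves in minimal position. But you never explain how to put the curves $f(\beta_0'')$ into minimal position with the geodesics making up $S^+$, and without that you cannot compute $\mathrm{cut}\,\underline{f}(\underline{\beta_0''})$ combinatorially. Your proposed move --- ``homotope $f(\alpha)$ so that it agrees with $f(\alpha')$ outside a neighbourhood of $S^+$'' --- does not achieve this; minimal position is a global condition, not a local one near $S^+$. The paper's key device, which you are missing, is to use Assertion~3 to replace $f$ itself by $\Phi f$ for a suitable $\Phi\in\mathrm{Homeo}_0(\bbR^2,\cO)$ so that the modified $f$ sends each geodesic $f_\sharp^n\gamma_i$ exactly to the geodesic $f_\sharp^{n+1}\gamma_i$ for every $n\geq -1$ (this $\Phi$ exists by property~4 of the appendix applied to the locally finite family $\{f(f_\sharp^n\gamma_i):n\geq -1\}$). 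After this modification one has $f^{-1}(S^+)=S^+\cup\bigcup_i f_\sharp^{-1}\gamma_i$, and since $\alpha$ is a geodesic it is in minimal position with all of $f^{-1}(S^+)$; hence $f(\alpha)$ is in minimal position with $S^+$, and likewise each $f(\beta_0'')$ is (its pieces along $S^+$ can be chosen disjoint from the $f_\sharp^{-1}\gamma_i$). Only then does the bijection between components of $f(\alpha)\setminus S^+$ and components of the $f(\beta_0'')\setminus S^+$ become a straightforward bookkeeping exercise, because $f(S^+)\subset S^+$ means every component of $f(\alpha)\setminus S^+$ sits inside a unique $f(\beta_0)$. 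This modification of $f$ is the missing idea; the rest of your outline is sound.
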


\begin{proof}
The first point simply expresses the fact that the restriction of $\mathrm{cut}$ to $\underline{\cG_{0}}$ is the identity. For the second point, consider two connected components $\beta_{1},\beta_{2}$ coming respectively from $\alpha_{1}, \alpha_{2}$ as in the definition of the map $\mathrm{cut}$. Since geodesics have minimal intersection, these two components are disjoint. Then it is easy to choose the curves $\beta''_{1}, \beta''_{2}$ so that they are disjoint (except maybe for their end-points, as usual). This proves the homotopic disjointness.
The third point is obvious.

Let us turn to the last point. By writing $(\mathrm{cut} \circ \underline{f})^n = (\mathrm{cut} \circ  \underline{f}) \circ (\mathrm{cut} \circ  \underline{f})^{n-1}$ and using induction we see that
 it suffices to show that $\mathrm{cut}  \underline{f} \mathrm{cut} =  \mathrm{cut}  \underline{f}$ on $\underline{\cG}$. 
According to the previous point, we may modify $f$ before doing the computation, as long as we do not change the homotopy class relative to $\cO$. We claim that this will allow us to assume the following additional property:
\emph{for every $n \geq -1$, $f$ sends the geodesic $f_{\sharp}^n\gamma_{i}$ to the geodesic $f_{\sharp}^{n+1}\gamma_{i}$}.
Indeed, consider the family $\cF = \{f_{\sharp}^n\gamma_{i}, i = 1, \dots, r, n \geq -1 \}$. 
According to hypothesis $(H'_{1})$ each $\gamma_{i}$ is a forward proper homotopy translation arc, thus this family is locally finite (this makes use of property 5 of the appendix). Furthermore all the arcs in the family $\{f^{n}(\gamma_{i}) \ \   i=1 , \dots , r, \ \ n \geq 0\}$   are pairwise homotopically  disjoint. Since $\cF$ is obtained from this family by first applying $f^{-1}$ and then replacing each arc by the geodesic homotopic to it, we see that the elements of $\cF$ are pairwise disjoint (this makes use of property 2 of the appendix). The image family $f(\cF)$ is again locally finite with pairwise disjoint elements; according to property 4 of the appendix, there exists some $\Phi \in \mathrm{Homeo}_{0}(\bbR^2, \cO)$ such that all the $\Phi(f( f_{\sharp}^n\gamma_{i})), n\geq-1$ are geodesics. Uniqueness of geodesics implies that 
$\Phi(f(f_{\sharp}^n\gamma_{i})) = f_{\sharp}^{n+1}\gamma_{i}$ for every $i$ and every $n \geq -1$. We may replace $f$ by $\Phi f$ to get the above additional property. Note that this does not affect the map $f_{\sharp}$.

Now let $\alpha \in \cG$, we want to check that $\mathrm{cut}  \underline{f} \mathrm{cut} (\underline{\alpha})=  \mathrm{cut}  \underline{f} (\underline{\alpha})$.
For this we may assume that $\alpha$ is a geodesic. In particular $\alpha$ is in minimal position with every element of the above family $\cF$, and thus $f(\alpha)$ is in minimal position with every element of the image family; thanks to the preliminary modification on $f$, this family is exactly the family of geodesics that make up $S^+$.
Thus, using the notations of the definition of the map $\mathrm{cut}$ and according to the remark following this definition, we have
$$
\mathrm{cut} \underline{f(\alpha)} = \sum_{\beta_{1} \in B(f(\alpha))} \underline{\beta}_{1}'' \ \ \ (\star)
$$
where $B(c)$, for any curve $c$, stands for the set of connected components of $c \setminus S^+$.

On the other hand let us determine what is $\mathrm{cut}  \underline{f} \mathrm{cut} \underline{\alpha}$. 
Since $\alpha$ is a geodesics we have
$$
\mathrm{cut} \underline{\alpha} = \sum_{\beta_{0} \in B(\alpha)} \underline{\beta}_{0}''.
$$
Applying $\mathrm{cut}  \underline{f}$ to this sum yields
$$
\mathrm{cut}  \underline{f} \mathrm{cut} \underline{\alpha} = \sum_{\beta_{0} \in B(\alpha)} \mathrm{cut}  \underline{f}(\underline{\beta}_{0}'').
$$
Because of the preliminary modification of $f$ we have $f^{-1}(S^+) = S^+ \cup \cup_{i=1, \dots,r} f_{\sharp}^{-1}\gamma_{i}$, and each geodesic $f_{\sharp}^{-1}\gamma_{i}$ meets $S^+$ only at one end-point. 
Each $\beta_{0}''$ is made of an arc included in $\alpha$ and two arcs close to $S^+$, which may be chosen to be disjoint from the $f_{\sharp}^{-1}\gamma_{i}$'s. Since $\alpha$ is in minimal position with all the $f_{\sharp}^{n}\gamma_{i}$'s with $n \geq -1$, we deduce that the arcs $\beta_{0}''$ are also in minimal position with all these geodesics. Then the arcs $f(\beta_{0}'')$ is in minimal position with all the curves that make up $S^+$. This allows us to write
$$
\mathrm{cut} \underline{f}\underline{\beta}_{0}'' = \sum_{\tilde \beta_{1} \in B(f(\beta_{0}''))} \underline{\tilde \beta}_{1}'' \ \ \ (\star \star).
$$
Now since $f(S^+) \subset S^+$, every connected component $\beta_{1}$ of $f(\alpha) \setminus S^+$ is included in a (unique) connected component$f(\beta_{0})$ of $f(\alpha) \setminus f(S^+)$. 
Thus the sum $(\star)$ writes as the sum over $\beta_{0} \in B(\alpha)$ of  the terms
$$
\sum_{\beta_{1} \in B(f(\beta_{0}))} \underline{\beta}_{1}'' \ \ \ (\star\star\star).
$$
To get the equality it remains to compare the sums $(\star\star)$ and $(\star\star\star)$. For this we have to compare their sets of indices.
Remember that $\beta_{0}''$ is obtained from $\beta_{0}$ by adding at both end-points some arcs included in $S^+$, and then making a small perturbation to push the resulting curve off $S^+$.
Thus there is a bijection $\beta_{1} \mapsto \tilde \beta_{1}$ between both  sets of indices, with $\tilde \beta_{1} = \beta_{1}$ except for the two extreme components of $f(\beta_{0}) \setminus S^+$ (which may coincide in the case when there is only one component) for which $\tilde \beta_{1}$ is obtained from $\beta_{1}$ by adding at one (or both)  end-point an arc included in $f(S^+)$ and making a small perturbation. In any case, the homotopy classes of $\beta_{1}''$ and $\tilde \beta_{1}''$ are easily seen to coincide. This completes the proof of the equality.
\end{proof}


Following Handel, we denote by $-\alpha$ the curve $\alpha$ with reverse orientation (note however that the formal sum $-\underline{\alpha}+\underline{\alpha}$ in $\oplus \underline{\cG}$ is not equal to zero!).
The interest of the map $\mathrm{cut} \circ  \underline{f}$ appears in the following crucial statement.
\begin{prop}\label{prop.fixed-point}
If there is some $\alpha \in \cG_{0}$ and some positive $n$  such that 
$$
-\underline{\alpha} \in \mathrm{cut} \underline{f}^n(\underline{\alpha})
$$
 then $f$ has a fixed point. 
\end{prop}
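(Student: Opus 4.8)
\noindent\emph{Proof strategy.} If $f$ already has a fixed point there is nothing to prove, so the plan is to assume that $f$ has no fixed point and derive a contradiction. Then $f$ is a Brouwer homeomorphism, so the free disk lemma applies and, in particular, $f$ has no periodic point at all. Moreover --- exactly as was used above in the discussion of the disk $C$ and in the proof of Lemma~\ref{lem.local} --- if some closed topological disk $D$ and some $k\neq 0$ satisfy $f^{k}(D)\subset D$ or $D\subset f^{k}(D)$, then the elementary Brouwer fixed point theorem applied to $f^{k}$ (or $f^{-k}$) on $D$ yields a fixed point of $f^{k}$, i.e. a periodic point of $f$: contradiction. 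Hence it suffices to manufacture, out of the hypothesis $-\underline{\alpha}\in\mathrm{cut}\,\underline{f}^{n}(\underline{\alpha})$, such a pair $(D,f^{k}(D))$. Using the preliminary adjustment of $f$ performed in the proof of the previous lemma (which changes neither $\underline{f}$ nor $\mathrm{cut}$) I may also assume $f(S^{+})\subset S^{+}$, so that $f$ slides each half-streamline of $S^{+}$ forward along itself, and $S^{+}\subset f^{-1}(S^{+})\subset\cdots\subset f^{-n}(S^{+})$.

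Next I would make the hypothesis concrete. Let $\hat\alpha$ be the geodesic representing $\underline{\alpha}$; it joins two points $a=f^{N}x_{p}$ and $b=f^{N}x_{q}$, meets $S^{+}$ only at these end-points (because $\underline{\alpha}\in\underline{\cG_{0}}$ and geodesics are in minimal position), and is in minimal position with every $f_{\sharp}^{m}\gamma_{i}$. By the remark following the definition of $\mathrm{cut}$, the hypothesis then says: there is a curve $\alpha^{*}$ homotopic to $f^{n}(\alpha)$ and in minimal position with $S^{+}$, such that one connected component $\beta$ of $\alpha^{*}\setminus S^{+}$, closed up along arcs of $S^{+}(\gamma_{q})$ and $S^{+}(\gamma_{p})$ (the two half-lines are forced by reading off the end-points of $-\underline\alpha$) into a curve $\beta''$, satisfies $\beta''\simeq-\hat\alpha$ relative to $\cO$. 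Appending $\hat\alpha$ to $\beta''$ produces a loop
$$
\Gamma_{0}\ =\ \hat\alpha\,*\,\beta''\ \simeq\ \hat\alpha\,*\,(-\hat\alpha),
$$
which is therefore null-homotopic in $\bbR^{2}\setminus\cO$; concretely $\Gamma_{0}$ is made of $\hat\alpha$, an arc of $S^{+}$, the arc $\beta\subset\alpha^{*}$, and an arc of $S^{+}$.

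The heart of the proof is to promote $\Gamma_{0}$ to an embedded disk nested with an iterate. Since $\hat\alpha$ and the arcs of $S^{+}$ are subarcs of geodesics in minimal position and $\hat\alpha$ is disjoint from $S^{+}$ off $\cO$, one can choose representatives coherently and perform innermost-disk surgeries so that $\Gamma_{0}$ becomes a genuine Jordan curve $\Gamma$ --- a subarc of $\hat\alpha$, two subarcs of $S^{+}$, and a small transverse push-off of $\beta$ --- still contractible in $\bbR^{2}\setminus\cO$; let $D$ be the closed disk it bounds, so $D\cap\cO=\emptyset$. Now I would compare $D$ with $f^{-n}(D)$. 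The key incidence is that, arranging $\alpha^{*}=f^{n}(\hat\alpha)$, we get $f^{-n}(\beta)\subset\hat\alpha$; thus $\partial D$ and $f^{-n}(\partial D)$ both contain arcs running along $\hat\alpha$ (the $\hat\alpha$-part of $\Gamma$, and the push-off of $f^{-n}(\beta)$), so they overlap there rather than cross. As for the remaining arcs: the two $S^{+}$-subarcs of $\partial D$ sit on $S^{+}(\gamma_{q}),S^{+}(\gamma_{p})$, which $f^{-n}$ only extends backward along themselves; and $f^{-n}(\hat\alpha)$ is homotopic to $f^{-n}(\alpha)$, which is homotopically disjoint from $f^{-n}(S^{+})\supset S^{+}$, hence stays off $S^{+}$ up to homotopy. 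Keeping track of these incidences, one checks that $\Gamma$ and $f^{-n}(\Gamma)$ do not cross, so $D$ and $f^{-n}(D)$ are nested; by the first paragraph this gives a periodic point of $f$, the desired contradiction, and $f$ has a fixed point.

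The main obstacle is exactly this last step: turning the purely homotopical statement ``$-\underline{\alpha}$ is a summand of $\mathrm{cut}\,\underline{f}^{n}(\underline{\alpha})$'' into honest planar topology. Concretely, one must choose representatives so that $\hat\alpha$, the arcs of $S^{+}$ and the arc $\beta\subset f^{n}(\hat\alpha)$ have controlled \emph{actual} (not merely homotopical) intersections; reduce $\Gamma_{0}$ to an embedded Jordan curve while keeping $\beta$ (up to push-off) on its boundary; and then determine on which side of $\Gamma$ the curve $f^{-n}(\Gamma)$ lies, in particular ruling out that the two overlap transversally instead of nesting. The rest is bookkeeping with the relations $f(S^{+})\subset S^{+}$ and $f^{-n}(\hat\alpha)\cap S^{+}\subset\cO$ up to homotopy, together with the free disk lemma.
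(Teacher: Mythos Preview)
Your strategy is quite different from the paper's, and the difference matters.  The paper never tries to build a nested pair of disks in the plane.  Instead it passes to the hyperbolic universal cover $\pi:\bbH^2\to\bbR^2\setminus\cO$, chooses a particular lift $\tilde h$ of $h=f^n$, and shows --- using the combinatorics of how lifts of $S^+(\gamma_{i_0}),S^+(\gamma_{i_1})$ and of $\alpha,\alpha'=f^n_\sharp\alpha$ land on $\partial\bbH^2$ --- that this lift exchanges two disjoint boundary intervals: $\tilde h(I_0)\subset I_1$ and $\tilde h(I_1)\subset I_0$.  Hence $\tilde h$ has no fixed point on $\partial\bbH^2$, and the Brouwer fixed point theorem on the closed disk $\bbH^2\cup\partial\bbH^2$ forces an interior fixed point, i.e.\ a periodic point for $f$, hence a fixed point by the plane translation theorem.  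The point is that on $\partial\bbH^2$ one only has to control \emph{endpoints} of lifted geodesics, and the minimal-position property of geodesics translates directly into clean separation statements there.

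Your planar argument, by contrast, never escapes the difficulty you yourself flag at the end.  The passage from ``$\beta''\simeq -\hat\alpha$ rel $\cO$'' to ``$D$ and $f^{-n}(D)$ are nested'' has a real gap.  First, once you perform innermost-disk surgeries on $\Gamma_0$ you lose control of which pieces of $\hat\alpha$, $S^+$ and $\beta$ survive on $\partial D$, so the later incidence bookkeeping no longer applies to the actual curve $\Gamma$.  Second, and more seriously, the ingredients you invoke are only \emph{homotopical}: ``$f^{-n}(\hat\alpha)$ is homotopically disjoint from $S^+$'' does not prevent the actual curve $f^{-n}(\hat\alpha)$ (which is not a geodesic) from crossing the $S^+$-arcs on $\partial D$; and nothing you have said rules out $f^{-n}(\hat\alpha)$ crossing $\hat\alpha$ itself or the push-off of $\beta$.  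Even the innocent-looking claim that the pieces of $\partial D$ along $\hat\alpha$ and the pieces of $f^{-n}(\partial D)$ coming from $f^{-n}(\beta)\subset\hat\alpha$ ``overlap rather than cross'' presupposes a coherent choice of side for the push-off that is not forced by the construction.  In short, turning the null-homotopy of $\Gamma_0$ into an honest nesting $f^{-n}(D)\subset D$ (or the reverse) requires exactly the kind of global separation information that the universal-cover argument supplies for free via the circle at infinity; without it the conclusion does not follow.
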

\begin{center}
\fbox{
\begin{tikzpicture}[scale=1]
\tikzstyle{fleche}=[>=latex,->, thick]
\draw [fleche] (5,0) -- (0,0) ; 
\draw [fleche] (5,2) -- (0,2) ;
\foreach \k in {1,3,5} 
	{\draw (\k,0) node {$\bullet$} ; \draw (\k,2) node {$\bullet$} ; }

\draw (1.8,2.8) node {$f^n(\alpha)$}; 
\draw (2.5,0.8) node {$\beta_{2}$}; 
\draw 	(1,2) 	..controls +(0.3,-2)  
		and +(0,-1.5)..
		(4,0)  	..controls +(0,0.7) and +(-20:0.5)..
		 (3,1)	..controls +(160:0.5) and +(0,-0.7)..
		(2,2)		..controls +(0,1)  and +(0,3)..
		(6,1)		..controls +(0,-2)  and +(2,0)..
		(4,-1)	..controls +(-2,0)  and +(0,-1)..
		(1,0);
\draw [->,very thick]  (2,2)  	..controls +(0,-0.7) and +(160:0.5)..   (3,1);
\draw [very thick]   (3,1)	..controls +(-20:0.5) and +(0,0.7).. (4,0);

\draw (5.3,1) node {$\alpha$}; 
\draw   [->,very thick]   (5,0) --    (5,1);
\draw   [very thick]   (5,1) --    (5,2);





\draw (0.2,2.5) node {$S^+$}; 
\end{tikzpicture} 
}

Hypothesis of proposition~\ref{prop.fixed-point}: $-\underline{\alpha} \in \mathrm{cut} \underline{f}^n\underline{\alpha}$
\end{center}

\begin{proof}
This is the only place where we will use the existence of a nice circle boundary for the universal cover. Let $\pi : \bbH^2 \to \bbR^2 \setminus \cO$ be the universal cover given by the theorem in the appendix. We know that every lift $\tilde h : \bbH^2 \to \bbH^2$ of a homeomorphism $h$ of $\bbR^2 \setminus \cO$ extends to the circle boundary $\partial  \bbH^2 \simeq \bbS^1$. Under the hypotheses of the proposition, we claim that \emph{there exists some lift $\tilde h$ of $h = f^n$ whose extension has no fixed point on the boundary.} The claim easily implies the proposition by the following argument. We apply Brouwer fixed point theorem to the homeomorphism $\tilde h$ of the closed  two-disk $\bbH^2 \cup \partial  \bbH^2$. Since $\tilde h$ has no fixed point on the boundary, it must have a fixed point in $\bbH^2$. Thus $h$ has a fixed point. That is, $f$ has a periodic point. Finally the Brouwer plane translation theorem provides a fixed point for $f$ (see section~\ref{sec.brouwer-theory}.\ref{sub.Brouwer}).

Let us prove the claim. We may assume that $\alpha$ is a geodesic. We also note that we may modify $h$ within its homotopy class relative to $\cO$, since this does not affect the restriction of lifts of $h$ on the boundary of $\bbH^2$. Thus, as in the proof of the previous lemma, we may assume that $h(S^+) \subset S^+$, and also that $\alpha'=h(\alpha)$ is a geodesic (this makes use of property 4 in the appendix). 
The end-points of $\alpha$ are some points $f^N(x_{i_{0}}), f^N(x_{i_{1}})$ of $\cO$.
Remember that every lift $\tilde \gamma$ of a curve $\gamma$ in $\cA$ has end-points $\tilde \gamma(0), \tilde \gamma(1)$ on $\partial \bbH^2$, and furthermore that these end-points depend only on the homotopy class of $\gamma$ relative to $\cO$. 
Consider an infinite curve $c_{0}$ surrounding $S^+(\gamma_{i_{0}})$  as on the picture below (on the left). 

\begin{center}
\fbox{
\begin{tikzpicture}[scale=1.29]
\tikzstyle{fleche}=[>=latex,->, thick]
\draw [fleche] (5,0) -- (0,0) ; 
\draw [fleche] (5,2) -- (0,2) ;
\foreach \k in {1,3,5} 
	{\draw (\k,0) node {$\bullet$} ; \draw (\k,2) node {$\bullet$} ; }

\def\eps{0.5}
\def\ang{10}
\def\epsp{0.1}
\foreach \k in {0,2} 
{
\draw [dotted] (5+\eps,\k) arc (0:180-\ang:\eps);
\draw  [dotted] (5-\eps,\k+\epsp) -- (3+\eps,\k+\epsp);
\draw  [dotted] (3+\eps,\k+\epsp) arc (\ang:180-\ang:\eps);
\draw  [dotted] (3-\eps,\k+\epsp) -- (1+\eps,\k+\epsp);
\draw  [dotted] (1+\eps,\k+\epsp) arc (\ang:180-\ang:\eps);
\draw  [dotted] (1-\eps,\k+\epsp) -- (0,\k+\epsp);

\draw  [dotted] (5+\eps,\k) arc (0:-180+\ang:\eps);
\draw [dotted]  (5-\eps,\k-\epsp) -- (3+\eps,\k-\epsp);
\draw [dotted]  (3+\eps,\k-\epsp) arc (-\ang:-180+\ang:\eps);
\draw [dotted]  (3-\eps,\k-\epsp) -- (1+\eps,\k-\epsp);
\draw [dotted]  (1+\eps,\k-\epsp) arc (-\ang:-180+\ang:\eps);
\draw [dotted]  (1-\eps,\k-\epsp) -- (0,\k-\epsp);
}

\draw (5.8,2) node {$c_{1}$}; 
\draw (5.8,0) node {$c_{0}$}; 
\draw (1,3) node {$S^+(\gamma_{i_{1}})$}; 
\draw (1,-1) node {$S^+(\gamma_{i_{0}})$}; 

\draw (5.3,1) node {$\alpha$}; 
\draw   [->]   (5,0) --    (5,1);
\draw    (5,1) --    (5,2);

\end{tikzpicture} 
}
\fbox{
\begin{tikzpicture}[scale=1.1]
\draw (0,0) circle (2);
\foreach \k in {1,-1}
{
\foreach \l in {0,180}
{
\draw (\l+\k*90:2) ..controls +(180+\l+\k*90:0.6)  and +(180+\l+\k*60:0.6)..   (\l+\k*60:2);
\draw (\l+\k*60:2) ..controls +(180+\l+\k*60:0.3)  and +(180+\l+\k*45:0.3)..   (\l+\k*45:2);
\draw (\l+\k*45:2) ..controls +(180+\l+\k*45:0.2)  and +(180+\l+\k*37:0.2)..   (\l+\k*37:2);
\draw (\l+\k*37:2) ..controls +(180+\l+\k*37:0.1)  and +(180+\l+\k*33:0.1)..   (\l+\k*33:2);
}}
\draw (-75:1.2) node {$c^+_{0}$}; 
\draw (-105:1.2) node {$c^-_{0}$}; 
\draw (-140:1.6) node {$\cS_{0}$}; 
\draw (140:1.6) node {$\cS_{1}$}; 

\draw (90:2.4) node {$I_{1}$};
\draw (-90:2.4) node {$I_{0}$};

\draw [very thick] (30:2) arc (30:150:2);
\draw [very thick] (-30:2) arc (-30:-150:2);

\draw (0.3,0) node {$\tilde \alpha$};
\draw [->] (0,-2) -- (0,0);
\draw (0,0) -- (0,2);

\end{tikzpicture} 
}
\end{center}

Let $\tilde c_{0}$ be any lift of $c_{0}$.
For each $n \geq 0$ there exist exactly two lifts $\tilde c_{\pm}^n$ of $f_{\sharp}^n(\gamma_{i_{0}})$ meeting $\tilde c_{0}$, and these lifts form a sequence
$$
\cS_{0} =  \dots, c_{-}^{2}, c_{-}^{1} , c_{-}^{0}, c_{+}^{0}, c_{+}^{1}, c_{+}^{2},\dots 
$$
in which the terminal end-point (on $\partial \bbH^2$) of some term coincides with the initial end-point of the next term (see the above picture, on the right).

Let $\tilde \alpha$ be the lift of $\alpha$ meeting $\tilde c_{0}$, let $\tilde c_{1}$ be the lift of $c_{1}$ meeting $\tilde \alpha$ where $c_{1}$ is an infinite curve surrounding $S^+(\gamma_{i_{1}})$. Let $\cS_{1}$ be the sequence of lifts of the geodesics $f_{\sharp}^n(\gamma_{i_{1}})$ defined analogously to $\cS_{0}$. Since the curves $c_{0}$ and $c_{1}$ are disjoint we see that the situation is an on the picture. In particular, if $I_{0}$ is the minimal open interval in the boundary of $\bbH^2$ that contains all the end-points of the elements of the sequence $\cS_{0}$, and $I_{1}$ is defined similarly, then $I_{0}$ and $I_{1}$ are disjoint. Since $h(S^+(\gamma_{i_{0}}))$ is included in $S^+(\gamma_{i_{0}})$ and thus disjoint from $S^+(\gamma_{i_{1}})$, we get that $c_{1}$ may be chosen to be disjoint from  $h(c_{0})$, and thus we see that no lift of $h(c_{0})$ intersects $c_{1}$. We deduce that for every lift $\tilde h$ of $h$ the interval $\tilde h(I_{0})$ is either disjoint from, included in or containing $I_{1}$. The same holds when we exchange $I_{0}$ and $I_{1}$.

Now the hypothesis $-\underline{\alpha} \in \mathrm{cut} \underline{f}^n\underline{\alpha}$ says that there exists some component $\beta$ of $f^n(\alpha) \setminus S^+$ such that $\beta''$ is homotopic to $\alpha$ with the reverse orientation (we have again endorsed the notations in the construction of the map $\mathrm{cut}$).
Thus $\beta''$ has a lift $\tilde \beta''$ such that $\tilde \beta''(0) = \tilde \alpha(1)$ and $\tilde \beta''(1) = \tilde \alpha(0)$.
If $\tilde \alpha'$ denotes the lift of $f^n(\alpha)$ that meets $\tilde \beta''$, then $\tilde \alpha'$ is an oriented  geodesic containing a subarc from a point in a geodesic $g_{1}$ of the sequence $\cS_{1}$ to a point in a geodesic $g_{0}$ of the sequence $\cS_{0}$. Let $\tilde h$ be the lift of $h$ that sends $\tilde \alpha$ to $\tilde \alpha'$. The curve $\tilde \alpha'$ may not meet $g_{1}$ more than once since these curves are geodesics and thus in minimal position (property 2 of the geodesics), and thus we see that the initial end-point of $\tilde \alpha'$ is included in $I_{1}$. Likewise the terminal end-point of $\tilde \alpha'$ is included in $I_{0}$. Since these points belongs respectively to $\tilde h(I_{0})$ and $\tilde h (I_{1})$, combining  this with previous observations we get that
$$
\tilde h(I_{0}) \subset I_{1}, \ \ \  \tilde h(I_{1}) \subset I_{0}.
$$
Thus $\tilde h$ has no fixed point in $I_{0}$ neither in $I_{1}$. Let $J_{0}, J_{1}$ be the complementary intervals of $I_{0} \cup I_{1}$ in the circle.
Since $\tilde h$ preserves orientation on the boundary, it must sends $J_{0}$ into $I_{0} \cup J_{1} \cup I_{1}$. Likewise $\tilde h (J_{1})$ is disjoint from $J_{1}$. Finally $\tilde h$ has no fixed point on the boundary, and the proof is complete.
\end{proof}

\subsection{Construction of a fitted family $T$}\label{ss.construction-fitted}
Under the assumptions $(H'_{1})$ and $(H'_{2})$ of the fixed point theorem, we look for some simple curve $\alpha$ and some positive $n$ such that $-\underline{\alpha} \in \mathrm{cut} \underline{f}^n\underline{\alpha}$. To this aim we will ``iterate and cut'' the curves $\delta_{i}$.
It is easy to see that for every $n \geq 2N+1$ the curve $f^n(\delta_{i})$ belongs to $\cG$, so that $\mathrm{cut} \underline{f}^n(\delta_{i})$ is well defined. Let
$$
T = \left\{\underline{\alpha} \in \mathrm{cut} \circ \underline{f}^n(\delta_{i}), i=1, \dots, r, n \geq 2N+1\right\}
$$
considered as a set \emph{without} multiplicity (otherwise some elements could have infinite multiplicity). This is a subset of $\underline{\cG}_{0}$.

\begin{lemm}[Existence of a fitted family]~
\label{lemm.existence-fitted}

\begin{enumerate}
\item (disjointness)  Every $\underline{\alpha}_{1} \neq \underline{\alpha}_{2} \in T$ are homotopically disjoint;
\footnote{Note that this does not exclude the possibility that $\underline{\alpha}_{1} = -\underline{\alpha}_{2}$.}
\item (finiteness) $T$ is a finite set;
\item (dynamical invariance) for every $\underline{\alpha}_{1} \in T$, every $\underline{\alpha}_{2} \in \mathrm{cut} \underline{f}(\underline{\alpha}_{1})$ belongs to $T$;
\item (non triviality) under hypothesis $(H'_{2})$, the family $T$ is non-empty, and it contains an element $\underline{\alpha}$ with distinct end-points.
\end{enumerate}
\end{lemm}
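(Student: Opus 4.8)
My plan is to treat the four assertions in order, exploiting that the first three are essentially formal consequences of the key lemma of the previous subsection together with hypothesis $(H'_1)$, while the fourth is where $(H'_2)$ and planar topology genuinely come in.

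First I would check that $f^n(\delta_i)\in\cG$ for $n\ge 2N+1$, so that $T$ is well defined. Since $\underline f$ permutes homotopy classes and preserves the relation ``$\underline\alpha\cap\underline\beta=\emptyset$'', applying $\underline f^{\,n}$ to the disjointness relations of $(H'_1)$ among the family $\{\,f^{-k}(\delta_i),\ f^{k}(\gamma_i):k\ge 0\,\}$ shows, for all $i,j$, that $\underline f^{\,n}(\underline{\delta_i})$ is homotopically disjoint from $\underline f^{\,\ell}(\underline{\delta_j})$ for every $\ell\le n$ and from $\underline f^{\,\ell}(\underline{\gamma_j})$ for every $\ell\ge n$. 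Taking $\ell\le 0<n$ gives $f^n(\delta_i)\in\cG$; and for any $m,m'\ge 2N+1$ it gives that $f^m(\delta_i)$ and $f^{m'}(\delta_j)$ are homotopically disjoint or homotopic. Assertion (1) then follows from point~2 of the key lemma (and the remark that the several curves produced by one application of $\mathrm{cut}$ may be chosen disjoint). For (2), every element of $T$ lies in $\underline{\cG}_0$, so its geodesic representative is a simple arc with endpoints in the finite set $\{f^Nx_1,\dots,f^Nx_r\}$, disjoint from $S^-\cup S^+$ except at its endpoints; as $S^-\cup S^+$ is a union of $2r$ disjoint properly embedded half-lines, $\bbR^2\setminus(S^-\cup S^+)$ is simply connected and meets $\cO$ in only finitely many points (the $f^mx_i$ with $|m|\le N-1$), so these representatives lie on a surface of finite type. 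The standard fact that a finite-type surface carries only finitely many pairwise disjoint, pairwise non-isotopic essential arcs with endpoints in a fixed finite set, combined with (1), yields (2).

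For (3) I would use the identity $\mathrm{cut}\circ\underline f\circ\mathrm{cut}=\mathrm{cut}\circ\underline f$ from the key lemma: since $f^n(\delta_i)\in\cG$ for $n\ge 2N+1$, one has $\mathrm{cut}\,\underline f^{\,n+1}(\delta_i)=\mathrm{cut}\,\underline f\big(\mathrm{cut}\,\underline f^{\,n}(\delta_i)\big)$, and by additivity of $\mathrm{cut}\circ\underline f$ on formal sums every element of $\mathrm{cut}\,\underline f(\underline\alpha_1)$, for $\underline\alpha_1\in\mathrm{cut}\,\underline f^{\,n}(\delta_i)$, occurs in $\mathrm{cut}\,\underline f^{\,n+1}(\delta_i)\subset T$. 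For (4) I would begin by establishing the dichotomy that $\mathrm{cut}\,\underline f^{\,n}(\delta_i)=0$, for some equivalently any $n\ge 2N+1$, holds if and only if $f^{2N+1}(\delta_i)$ is homotopic to $\gamma_i$ relative to $\cO$: indeed $\mathrm{cut}\,\underline f^{\,n}(\delta_i)=0$ means either $f^n_\sharp\delta_i$ is isotopic to an $S^+$-geodesic --- which by matching endpoints can only be $f^{\,n-2N-1}_\sharp\gamma_i$ --- or all its pieces cut along $S^+$ are inessential, and then stacking the ``bigon'' homotopies shows $f^n_\sharp\delta_i$ is homotopic rel $\cO$ to the subarc of $S^+(\gamma_i)$ joining its endpoints, i.e. again to $f^{\,n-2N-1}_\sharp\gamma_i$; applying $\underline f^{-(n-2N-1)}$ gives the equivalence.

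Thus $T=\emptyset$ would force $f^{2N+1}(\delta_i)\simeq\gamma_i$ rel $\cO$ for all $i$, whence the geodesic streamlines $L_i:=\bigcup_{n\in\bbZ}f^n_\sharp\delta_i$ equal $S^-(\delta_i)$ followed by finitely many geodesic arcs followed by $S^+(\gamma_i)$; these are $r$ pairwise disjoint properly embedded lines (properness and disjointness coming from $(H'_1)$ and the minimal position of geodesics, as in the Schoenflies discussion of Section~\ref{sec.brouwer-theory}), with ends at infinity $\alpha_i$ and $\omega_i$, so by $(H'_2)$ the chords $[\alpha_i,\omega_i]$ would be pairwise crossing --- impossible. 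Hence $T\neq\emptyset$. For the final clause, observe that a piece of some $f^n_\sharp\delta_i$ cut along $S^+$ which joins $S^+(\gamma_{k_0})$ to $S^+(\gamma_{k_1})$ with $k_0\ne k_1$ produces an element of $\cG_0$ with distinct endpoints $f^Nx_{k_0},f^Nx_{k_1}$ (automatically essential), so it is enough to find $i,j,n$ with $j\ne i$ and $f^n_\sharp\delta_i$ meeting $S^+(\gamma_j)$. Assuming this fails, I would track along each $f^n_\sharp\delta_i$ the successive half-streamlines of $S^+$ it crosses to see that every cut-piece then joins a half-streamline to itself and that, for an index $i_0$ with $f^{2N+1}(\delta_{i_0})\not\simeq\gamma_{i_0}$, the streamline $\bigcup_{n\in\bbZ}f^n_\sharp\delta_{i_0}$ is disjoint from every half-streamline other than $S^+(\gamma_{i_0})$; a planar argument --- replacing this streamline by a genuinely properly embedded line and re-running the crossing-chords contradiction, or passing to the universal cover as in the proof of Proposition~\ref{prop.fixed-point} --- should then force $f^{2N+1}(\delta_{i_0})\simeq\gamma_{i_0}$, contradicting the choice of $i_0$. \emph{The main obstacle I anticipate is precisely this last step}: controlling the possibly non-properly-embedded streamline well enough to run a planar/cyclic-order obstruction, which is plausibly where the heavier tools (reducing lines, fitted families disappearing under iteration) alluded to earlier become necessary.
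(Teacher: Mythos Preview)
Your treatment of items (1)--(3) is correct and essentially coincides with the paper's: disjointness from point~2 of the key lemma applied to the pairwise homotopically disjoint family $\{f^n(\delta_i)\}$, finiteness from the bound on disjoint non-homotopic arcs on a finite-type surface, and invariance from $\mathrm{cut}\,\underline f\,\mathrm{cut}=\mathrm{cut}\,\underline f$.

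The gap is in item (4), exactly where you flag it. You split the claim into ``$T\neq\emptyset$'' and ``$T$ has an element with distinct endpoints''; your streamline argument for non-emptiness is fine, but for the second half you assume that \emph{no} $f_\sharp^n\delta_i$ meets \emph{any} $S^+(\gamma_j)$ with $j\neq i$ and then try to extract a planar obstruction from a possibly non-properly-embedded streamline. As you suspect, this does not close easily, and invoking ``reducing lines / fitted families disappearing under iteration'' would be vast overkill.

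The paper avoids this entirely with a short separation argument that gives both halves at once. One does \emph{not} assume failure for all pairs simultaneously. Assume only that $f_\sharp^n\delta_1$ is disjoint from $S^+(\gamma_r)$ for all $n\ge 2N+1$ (otherwise we are done with the pair $(1,r)$). Iterating backward extends this to all $n$; together with $(H'_1)$ it shows that the connected set
\[
C \;=\; S^-(f_\sharp^{2N}\delta_1)\ \cup\ S^+(\gamma_1)
\]
is disjoint from both $S^-(\delta_r)$ and $S^+(\gamma_r)$. Now $C$ is a union of geodesic arcs forming a properly embedded line running from $\alpha_1$ to $\omega_1$ at infinity, so by the cyclic order $(H'_2)$ it separates $S^-(\delta_r)$ from $S^+(\gamma_r)$. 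The half-streamline $S^-(f_\sharp^{2N}\delta_r)$ contains the first and meets the second, hence must meet $C$; it is disjoint from $S^-(f_\sharp^{2N}\delta_1)$ by $(H'_1)$, so it meets $S^+(\gamma_1)$. Iterating forward produces some $n\ge 2N+1$ with $f_\sharp^n\delta_r\cap S^+(\gamma_1)\neq\emptyset$, and then $\mathrm{cut}\,\underline f^{\,n}(\delta_r)$ contains an element with endpoints $f^Nx_r\neq f^Nx_1$. The idea you were missing is to leverage a \emph{single} assumed disjointness to build a separating line $C$ and force a \emph{different} index to fail; no control of non-proper streamlines is needed.
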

A set satisfying items 1,2,3 is called a \emph{fitted family}.

\subsubsection*{An example}
\noindent \begin{minipage}[b]{0.6\linewidth}
We describe an example satisfying the hypotheses of the fixed point theorem (with a fixed point!).
As for our previous example, it will be constructed as a perturbation of a flow.
First consider a map $f$ which is the time one map of a  flow of the closed disk as on the first picture, and six fixed points $\alpha_{1}, \dots, \omega_{3}$ on the boundary.
On the second picture we indicate how to modify $f$ into a homeomorphism $f'=\phi \circ f$ so that, after the modification, $\alpha_{i},\omega_{i}$ are the $\alpha$ and $\omega$ limit point of  a point $x_{i}$.
\end{minipage}
\noindent \begin{minipage}[b]{0.4\linewidth}
\begin{center}
\fbox{
\begin{tikzpicture}[scale=0.67]
\tikzstyle{fleche}=[>=latex,->]
\draw [thick] (0,0) circle (3cm) ;
\draw (0,0) [fill] circle (0.05) ;

\foreach \i in {1,2,3}  
{\draw [fleche] (\i*120+10:3) -- (\i*120+110:3)  ;
\draw (\i*120+10:3.4) node {$\alpha_{\i}$};
\draw  (\i*120+230:3.4) node {$\omega_{\i}$};
}

\clip  (0,0) circle (3cm) ;
\foreach \k in {1,2,3}
	{\draw [rounded corners=0.5cm] (0:\k cm) -- (120:\k cm) -- (240:\k cm) -- cycle ;}

\end{tikzpicture} 
}
\end{center}
\end{minipage}

\begin{figure}[p]
\begin{center}
\fbox{
\begin{tikzpicture}[scale=1.5]
\tikzstyle{fleche}=[>=latex,->]
\draw [thick] (0,0) circle (3cm) ;

\foreach \i in {1,2,3}  
{\draw [fleche] (\i*120+10:3) -- (\i*120+110:3)  ;
\draw (\i*120+10:3.4) node {$\alpha_{\i}$};
\draw  (\i*120+230:3.4) node {$\omega_{\i}$};
\pgfsetfillopacity{0.1}
\filldraw [rotate=\i*120-120]   (43+120:1.6) ellipse (0.6 and 0.1);
\filldraw [rotate=\i*120-120]   (-43-120:1.6) ellipse (0.6 and 0.1);
\pgfsetfillopacity{1}

\draw [fleche,rotate=\i*120] (-1.7,0.47) -- (-1.3,0.47) ;
\filldraw [rotate=\i*120] (-1.15,0.47) circle (0.04) ;
\filldraw [rotate=\i*120] (-1.93,0.47) circle (0.04) ;

\draw [fleche,rotate=\i*120] (-1.3,-0.47) -- (-1.7,-0.47) ;
\filldraw [rotate=\i*120] (-1.15,-0.47) circle (0.04) ;
\filldraw [rotate=\i*120] (-1.93,-0.47) circle (0.04) ;

\filldraw [rotate=\i*120] (0:2.13) circle (0.04) ;
\draw [rotate=\i*120+120] (0:2.4) node {$x_{\i}$} ;
}

\draw [fleche] (-119:2) .. controls +(50:0.8) and +(10:-0.8) ..(0.8,-0.8) ;
\draw [>=latex,<-] (-121:2) .. controls +(70:0.8) and +(110:-0.8) ..(-1.05,0.35) ;

\def \k {2.3}
\draw [rounded corners=0.5cm] (0:\k) -- (120:\k) -- (240:\k) -- cycle;

\end{tikzpicture} 
}
\fbox{
\begin{tikzpicture}[scale=1.5]
\tikzstyle{fleche}=[>=latex,->]
\draw [thick] (0,0) circle (3cm) ;

\foreach \i in {1,2,3}  
{\draw  (\i*120+10:3) -- (\i*120+110:3)  ;
\draw (\i*120+10:3.4) node {$\alpha_{\i}$};
\draw  (\i*120+230:3.4) node {$\omega_{\i}$};
\draw [dotted,rotate=\i*120-120]   (43+120:1.6) ellipse (0.6 and 0.1);
\draw [dotted,rotate=\i*120-120]   (-43-120:1.6) ellipse (0.6 and 0.1);
\filldraw [rotate=\i*120] (-1.15,0.47) circle (0.04) ;
\draw [rotate=\i*120] (-1.93,0.47) circle (0.04) ;

\draw [rotate=\i*120] (-1.15,-0.47) circle (0.04) ;
\filldraw [rotate=\i*120] (-1.93,-0.47) circle (0.04) ;

\filldraw [rotate=\i*120] (0:2.13) circle (0.04) ;
\draw [rotate=\i*120+120] (0:2.4) node {$x_{\i}$} ;

\draw [rotate=\i*120,ultra thick] (1*120+10:3) -- (-1.93,0.67)  ;
\filldraw [rotate=\i*120]  (-1.93,0.67) circle (0.04) ;
\draw [rotate=\i*120,ultra thick] (-1.93,0.67) .. controls +(0,-0.3) and +(0,0.2) .. (-1.15,0.47);
\draw [rotate=\i*120,ultra thick,fleche] (-1.93,-0.47)  -- (1*120+110:3) ; 
}

\def \k {2.3}
\draw [rounded corners=0.5cm] (0:\k) -- (120:\k) -- (240:\k) -- cycle;

\end{tikzpicture} 
}\\
The sets $S^-(\delta_{i})$ and $S^+(\gamma_{i})$
\end{center}
\end{figure}

The points $x_{i}$ will met hypothesis $(H_{1}),(H_{2})$ of the fixed point theorem. And, of course, the restriction to the open disk will met the corresponding hypotheses $(H'_{1}),(H'_{2})$. The map $\phi$ is the commutative product of six maps supported on pairwise disjoint topological disks which are free for $f$. Here, in the notations on hypothesis $(H'_{1})$ we may choose $N=1$, and the properly embedded half-lines $S^-(\delta_{i}), S^+(\gamma_{i})$ are indicated in thick lines on the third picture.

\newpage
\begin{exercise}
Describe the fitted family $T$ on this example. Describe the dynamics induced by $f'$ on this family, by drawing a graph $\Gamma_{T}$ whose vertices are the elements of the family, and one arrow from ${t}$ to each element of $\mathrm{cut} \circ \underline{f'} ({t})$. This graph will play an important part in the proof of the theorem. Hint: there are twelve elements, and for each element ${t}\in T$, the arc $-{t}$ with opposite orientation also occurs in $T$. Solution: see the second appendix.
\end{exercise}

\begin{proof}[Proof of lemma~\ref{lemm.existence-fitted}]
Due to hypothesis $(H'_{1})$ the curves in the set 
$$
\{f^n(\delta_{i}), n \geq 0, i=1, \dots, r\}
$$
 are pairwise homotopically disjoint.  Since the map $\mathrm{cut}$ preserves homotopic disjointness (previous lemma), we get the first point.

According to the first point, for the second point it is enough to bound the number of disjoint non-homotopic simple curves in $\cG_{0}$.
The situation amounts to the following problem. Consider a closed disk with $r$ marked points on the boundary, and $\ell = (2N-1)r$ punctures in the interior. Consider a family of simple curves avoiding the punctures, with each end-point equal to one marked point on the boundary, pairwise disjoint and non-homotopic. Let $N(r,\ell)$ be the maximum number of curves in such a family. An immediate induction based on the following estimate shows that $N(r,\ell) < +\infty$ for every $r,\ell$.
\begin{exercise}
Prove that  $N(r,0) \leq r^2$ and $N(r,\ell) \leq 2r^2 + 2N(r+1,\ell-1)$.
\end{exercise}

The third point is a consequence of the equality $(\mathrm{cut}  \underline{f})^n = \mathrm{cut} (\underline{f}^n)$.

For the last point we begin by the following observation. Assume some curve $\alpha \in \cG$ is not homotopically disjoint from some $f_{\sharp}^n(\gamma_{j})$ with $n \geq 2N+1$, and assume that $S^+(\gamma_{j})$ do not contain both end-points of $\alpha$. Then the sum $\mathrm{cut}\underline{\alpha}$ contains some element with distinct end-points.
Thus for the last point it suffices to prove that for some $n\geq2N+1$, and some $i \neq j$, the curve $f^n(\delta_{i})$ is not homotopically disjoint from at least one of the curves that make up  $S^+(\gamma_{j})$.
We will work with geodesics, and use repeatedly that two geodesics are disjoint 
as soon as their homotopy classes are, and that the curves $S^+(\delta_{i})$ are positively invariant up to homotopy.
Assume that the geodesic $f_{\sharp}^n\delta_{1}$ is disjoint from $S^+(\gamma_{r})$ for every $n \geq 2N+1$ (otherwise the point is proved).
Iterating negatively, we get that $f^{-\ell}(f_{\sharp}^{2N+1}\delta_{1})$ is disjoint from $f^{-\ell}(S^+(\gamma_{r}))$. Thus $f_{\sharp}^n\delta_{1}$ is disjoint from $S^+(\gamma_{r})$ for every $n$.
Likewise, iterating the equality 
$$
S^-(\delta_{1}) \cap S^-(\delta_{r}) = \emptyset
$$
gives that $S^-(f_{\sharp}^{2N}\delta_{1})$ is also disjoint from $S^-(\delta_{r})$.
Thus the connected set 
$$
C = S^-(f_{\sharp}^{2N}\delta_{1}) \cup S^+(\gamma_{1})
$$
is disjoint from $S^-(\delta_{r})$ and $S^+(\gamma_{r})$. 
 Due to hypothesis $(H'_{2})$ about the cyclic order at infinity, $C$ must separates $S^-(\delta_{r})$ from $S^+(\gamma_{r})$.
Since $S^-(f_{\sharp}^{2N}\delta_{r})$ contains the first of this two sets and meets the second one, it must also meet  $C$. As before 
$S^-(f_{\sharp}^{2N}\delta_{r})$ is disjoint from $S^-(f_{\sharp}^{2N}\delta_{1})$, thus $S^-(f_{\sharp}^{2N}\delta_{r})$ meets $S^+(\gamma_{1})$. Iterating positively we get that  
$f_{\sharp}^{n}\delta_{r}$ meets $S^+(\gamma_{1})$ for some $n \geq 2N+1$, which proves the point.
\end{proof}

\subsection{Properties of $T$}
From now on we assume the hypotheses $(H'_{1})$ and $(H'_{2})$ of the theorem.

\begin{lemm}~

\begin{enumerate}
\item If ${t}\in T$ has distinct end-points, then there exists $n >0$ such that $ \mathrm{cut} \underline{f}^n({t})$ contains two distinct elements, also with distinct end-points. 
\item There exists some ${t} \in T$, with distinct end-points, and some $n>0$ such that 
$$
2{t} \in  \mathrm{cut} \underline{f}^n({t}).
$$
\item For such a ${t}\in T$, we have $-{t} \in \mathrm{cut} \underline{f}^n({t})$.
\end{enumerate}

\end{lemm}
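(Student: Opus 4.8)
The plan is to extract all three items from the finiteness and invariance properties of $T$ (items 1 and 2) together with a universal‑cover/planarity argument about orientations (item 3). Throughout I work with geodesic representatives and use freely that $\mathrm{cut}$ preserves homotopic disjointness, that $(\mathrm{cut}\,\underline{f})^n=\mathrm{cut}\,\underline{f}^n$ (both from the previous lemma), and the properties of $\cH$ listed in the appendix. Write $T_d\subset T$ for the set of elements with distinct end‑points; by Lemma~\ref{lemm.existence-fitted}(4), $T_d\neq\emptyset$. I first record a preliminary claim: \emph{if $t\in T_d$ has end‑points $f^N(x_i),f^N(x_j)$ with $i\neq j$, then $\mathrm{cut}\,\underline{f}(t)$ contains at least one element of $T_d$.} Indeed the geodesic $g$ homotopic to $f(t)$ runs from $S^+(\gamma_i)$ to $S^+(\gamma_j)$; reading the "from/to" half‑streamlines of the successive components of $g\setminus S^+$ gives a walk in the graph whose vertices are the half‑lines $S^+(\gamma_k)$, from $S^+(\gamma_i)$ to $S^+(\gamma_j)$. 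Since $i\neq j$, some component joins two \emph{distinct} half‑streamlines; its curve $\beta''$ then has distinct end‑points, is therefore essential, lies in $\cG_{0}$, and hence in $T$ by Lemma~\ref{lemm.existence-fitted}(3). Consequently, if $b_n(t)$ denotes the number of elements of $T_d$ in $\mathrm{supp}(\mathrm{cut}\,\underline{f}^n(t))$ counted with multiplicity, then $b_{n+1}(t)\geq b_n(t)$ for $t\in T_d$, so $(b_n(t))_n$ is non‑decreasing and always $\geq 1$.

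\textbf{Item 1} is the heart of the matter and the only place where the specific cyclic order $(H'_2)$ is needed. I would argue by contradiction: suppose $\mathrm{supp}(\mathrm{cut}\,\underline{f}^n(t))$ contains at most one element of $T_d$ for every $n$. By the preliminary claim it then contains exactly one, say $t_n$, and $\mathrm{cut}\,\underline{f}(t_n)$ has $t_{n+1}$ as its unique $T_d$‑element; since $T$ is finite, $(t_n)$ is eventually periodic, so some $s\in T_d$ satisfies $\mathrm{cut}\,\underline{f}^p(s)=s+(\text{same--endpoint terms})$ for some $p>0$, i.e.\ $s$ is, up to homotopy and cutting, a genuine $p$‑periodic translation arc joining the two orbits. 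I would then contradict $(H'_2)$ exactly as in the proof of Lemma~\ref{lemm.existence-fitted}(4): the half‑streamline‑like object built from the forward $\mathrm{cut}$‑iterates of $s$ is homotopically disjoint from every $S^-(\delta_k)$ and from $S^+(\gamma_k)$ for $k\neq i,j$, and, joining orbits $i$ and $j$, by the interleaved order it must separate on one side some full $S^-(\delta_k)$ from its own forward images (which tend to $\omega_k$ on the other side), forcing a crossing of $S^+$ after finitely many iterations and hence a proper component of distinct end‑points not homotopic to $s$ — a contradiction. Making precise which side and which $k$, and that the extra component genuinely produces a second element of $T_d$, is the main obstacle.

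\textbf{Item 2} follows from item 1 by a finiteness argument. Iterating item 1 (apply it to $t$, then to the two $T_d$‑elements it produces, padding with the preliminary claim) gives $b_m(t)\to\infty$ for every $t\in T_d$. Put the preorder on the finite set $T_d$ in which $u\preceq v$ iff $u\in\mathrm{supp}(\mathrm{cut}\,\underline{f}^m(v))$ for some $m\geq 0$, and choose $v$ in a class $C\subseteq T_d$ that is minimal for the induced partial order. Minimality forces $\mathrm{supp}(\mathrm{cut}\,\underline{f}^n(v))\cap T_d\subseteq C$ for all $n$; since $C$ is finite and $b_n(v)\to\infty$, there is $w\in C$ with $(\mathrm{cut}\,\underline{f}^n(v))(w)\geq 2$ for $n$ large. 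As $w$ and $v$ lie in the same class, $v\in\mathrm{supp}(\mathrm{cut}\,\underline{f}^p(w))$ for some $p$, and then, using $\mathrm{cut}\,\underline{f}^{\,n+p}=(\mathrm{cut}\,\underline{f}^{p})\circ(\mathrm{cut}\,\underline{f}^{n})$, we get $(\mathrm{cut}\,\underline{f}^{\,n+p}(v))(v)\ \geq\ (\mathrm{cut}\,\underline{f}^{p}(w))(v)\cdot(\mathrm{cut}\,\underline{f}^{n}(v))(w)\ \geq\ 2$. Thus $2v\in\mathrm{cut}\,\underline{f}^{\,n+p}(v)$ with $v\in T_d$, which is item 2.

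\textbf{Item 3.} Fix $t\in T_d$ with end‑points $f^N(x_i),f^N(x_j)$ and $n>0$ with $2t\in\mathrm{cut}\,\underline{f}^n(t)$, and let $g$ be the geodesic homotopic to $f^n(t)$; it runs from $S^+(\gamma_i)$ to $S^+(\gamma_j)$ and has, by hypothesis, two components $\beta_1,\beta_2$ of $g\setminus S^+$ with $\underline{\beta}_1''=\underline{\beta}_2''=\underline{t}$, both oriented $\gamma_i\to\gamma_j$ consistently with $\underline{t}$. Since $g$ is a simple arc, meeting $\beta_1$ before $\beta_2$, the portion $\eta$ of $g$ between them runs from $S^+(\gamma_j)$ to $S^+(\gamma_i)$. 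I would now lift to the universal cover $\bbH^2$ of $\bbR^2\setminus\cO$ and argue as in Proposition~\ref{prop.fixed-point}: a lift $\tilde g$ meets lifts $\tilde\beta_1,\tilde\beta_2$ whose end‑point pairs on $\partial\bbH^2$ are (deck translates of) those of lifts of $\underline{t}$, and the subarc $\tilde\eta\subset\tilde g$ between them is trapped so that its two end‑points on $\partial\bbH^2$ are forced to be those of a lift of $-\underline{t}$; hence the component of $g\setminus S^+$ inside $\eta$ that realizes this, once extended along $S^+$ and pushed off it, is homotopic to $-t$ relative to $\cO$, so $-\underline{t}\in\mathrm{cut}\,\underline{f}^n(t)$. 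Planarly: $\beta_1,\beta_2$ are disjoint and homotopic, hence cobound a band whose other two sides lie on $S^+(\gamma_i)$ and $S^+(\gamma_j)$, and $\eta$, being forced to join the $\gamma_j$‑side of that band to its $\gamma_i$‑side, must after cutting along $S^+$ contribute a piece homotopic to $t$ traversed backwards. Together with Proposition~\ref{prop.fixed-point} this produces the desired fixed point and finishes the proof of the intrinsic theorem.
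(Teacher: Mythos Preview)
Your argument for item~2 is correct and is a pleasant alternative to the paper's proof. The paper works in the oriented graph $\Gamma_T$ (an edge from $t_1$ to $t_2$ for each occurrence of $t_2$ in $\mathrm{cut}\,\underline f(t_1)$) and shows that some injective cycle meeting $T_d$ must intersect another injective cycle; concatenating them produces two distinct closed walks of equal length based at some $t\in T_d$. Your route via the growth $b_n(t)\to\infty$ and a minimal class for the reachability preorder is equally valid and arguably more transparent.

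Items~1 and~3, however, have genuine gaps.

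\textbf{Item 1.} You argue by contradiction, eventually produce a $\mathrm{cut}\,\underline f$-periodic $s\in T_d$, and then want to build a ``half-streamline-like object'' from its iterates to contradict $(H'_2)$. But this object is never defined: the iterates $\mathrm{cut}\,\underline f^m(s)$ are formal sums in $\oplus\underline\cG_0$ that may contain many same-endpoint terms on arbitrary $S^+(\gamma_\ell)$, and there is no evident way to assemble a single curve with a well-defined end at infinity from them; you yourself flag this as ``the main obstacle''. The paper's argument is direct and avoids all of this: for the geodesic $\alpha\in t$ with endpoints on $S^+(\gamma_i),S^+(\gamma_j)$, the set $\cC=\alpha\cup S^+(\gamma_i)\cup S^+(\gamma_j)$ is connected and, by the cyclic order $(H'_2)$, separates $S^-(\delta_k)$ from $S^+(\gamma_k)$ for some $k\neq i,j$. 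Then $S^+(f_\sharp^{-(2N+1)}\gamma_k)$, which joins these two sets and is disjoint from $S^+(\gamma_i)\cup S^+(\gamma_j)$, must meet $\alpha$; equivalently $f_\sharp^{2N+1}\alpha$ meets $S^+(\gamma_k)$, and cutting at that crossing already yields two distinct elements of $T_d$. No contradiction, no periodic object: $n=2N+1$ works for every $t\in T_d$.

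\textbf{Item 3.} Both sketches assert the conclusion rather than prove it. In the cover, $\tilde\eta$ is a bounded subarc of $\tilde g$ and has no endpoints on $\partial\bbH^2$, so your sentence about its ``two end-points on $\partial\bbH^2$'' does not parse; nothing you wrote singles out which component of $\eta\setminus S^+$ should give $-t$. In the plane, $\eta$ is not forced to stay in the band $R(\beta_1,\beta_2)$: it runs from a point of $\alpha_1\subset S^+(\gamma_j)$ to a point of $\alpha_0\subset S^+(\gamma_i)$ but may perfectly well leave $R$. The paper closes this with two ideas you are missing. First, among all pairs $\tau_1,\tau_2$ of components of $\alpha'\setminus S^+$ with $\underline{\tau_\ell''}=t$, choose one with $R(\tau_1,\tau_2)$ \emph{minimal}; then no component of $\alpha'\cap\inte R$ joins $\alpha_0$ to $\alpha_1$ (that would give a strictly smaller band). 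Assuming for contradiction that $-t\notin\mathrm{cut}\,\underline f^n(t)$, none joins $\alpha_1$ to $\alpha_0$ either, so every such component has both ends on the same $\alpha_\ell$. This allows one to draw a simple arc $\beta$ inside $R$ from $\tau_2(0)$ to $\tau_1(1)$ disjoint from $\alpha'$. Second, writing $\alpha'=\sigma_1\tau_1\delta\tau_2\sigma_2$, the Jordan curve $\beta\cup\delta$ separates $\tau_1(0)$ from $\tau_2(1)$; since $\sigma_1$ avoids $\beta\cup\delta$, the point $\alpha'(0)$ lies in the bounded component. But $\alpha'(0)$ is the endpoint of the properly embedded half-line $S^+(f_\sharp^n\gamma_{i_0})$, which is disjoint from $R\supset\beta$ and from $\delta$, so it cannot end in a bounded region --- contradiction. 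The minimality trick and this final properness argument are exactly the missing ingredients.
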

\begin{proof}
For the first point, assume that the end-points of the geodesic $\alpha \in {t}$ belongs to  $S^+(\gamma_{i})$ and $S^+(\gamma_{j})$ with $i \neq j$.
Due to the assumption on the cyclic order at infinity, the set 
$$
\cC = \alpha \cup S^+(\gamma_{i}) \cup S^+(\gamma_{j})
$$
separates $S^-(\delta_{k})$ from $S^+(\gamma_{k})$ for some $k \neq i,j$. 
Thus $S^+(f_{\sharp}^{-(2N+1)}\gamma_{k})$ meets $\cC$, but it is disjoint from $S^+(\gamma_{i})$ and $S^+(\gamma_{j})$, thus it must meet $\alpha$. This means that 
$f_{\sharp}^{2N+1} \alpha$ meets $S^+(\gamma_{k})$, and thus the sum $\mathrm{cut}\underline{f}^n({t})$ contains two distinct elements with distinct end-points.

\bigskip

The second point follows from the first one by a purely combinatorial argument. We use the oriented graph $\Gamma_{T}$ whose vertices are the elements of $T$, 
with one edge from ${t_{1}}$ to ${t_{2}}$ for each occurrence of ${t_{2}}$ in $\mathrm{cut} \underline{f}(t_{1})$ (we have already described such a graph for the example in section~\ref{ss.construction-fitted}; note that there may be several edges having the same end-points). The equality $(\mathrm{cut} \circ \underline{f})^n = \mathrm{cut} \circ \underline{f}^n$ have the following nice interpretation: for every ${t_{1}},{t_{2}} \in T$, the number of oriented paths of length $n$ from ${t_{1}}$ to ${t_{2}}$ is equal to the multiplicity of ${t_{2}}$ in 
$\mathrm{cut} \underline{f}^n({t_{1}})$. Denote by $T'$ the subset of $T$ containing the elements with distinct end-points. Thus the first point of the lemma says that for every ${t_{1}} \in T'$ there is at least two distinct paths of the same length from ${t_{1}}$ to some elements of $T'$. 
We call \emph{cycle} a path in $\Gamma_{T}$ starting and ending at the same vertex. Cycles may be indexed by $\bbZ/\ell\bbZ$, and we identify two cycles when they differ from a translation in $\bbZ/\ell\bbZ$. A cycle is called \emph{injective} if the corresponding map $\bbZ/\ell\bbZ \to T$ is injective.
Note that for every cycle $c$, for every element $t$ of $c$, $c$ contains an injective cycle $c'$ containing $t$ (remove inductively loops that do not contain $t$).
The second point of the lemma amounts to finding some ${t} \in T'$ which belongs to two distinct (non necessarily injective) cycles of the same length.
We first prove that there exists some injective cycle $c$ containing some vertex in $T'$ and meeting some other injective cycle $c'$. For this we argue by contradiction. Assume on the contrary that every injective cycle meeting $T'$ is disjoint from every other injective cycle.
From this assumption we get a partial order on the set of injective cycles meeting $T'$, deciding that $d' < d$ if there is a path from some vertex of $d$ to some vertex of $d'$. Consider some injective cycle $d$ meeting $T'$ which is minimal for this order. Choose some $t \in d \cap T'$. Due to the first point there is some path from $t$ to some $t_{1} \in T' \setminus d$. Applying inductively the first point we get an infinite path starting from $t_{1}$ and meeting $T'$ infinitely many times. This path must contain a cycle meeting $T'$, and thus an injective cycle meeting $T'$. This contradicts the minimality of $d$. Thus we have an injective cycle $c$, meeting $T'$ at some vertex $t$, and meeting some other cycle $c'$. Then we may easily construct two different (non injective) cycles starting at $t$ and having the same length: the first one is just $c$ repeated a certain number of times, for the second one 
we run along $c$ from $t$ to the intersection vertex with $c'$, then we run along $c'$ a certain number of times, then we go back to $t$ along the end of $c$; the number of repetitions of $c$ and $c'$ are adjusted to get equal total lengths. This proves the second point.

\bigskip

The argument of the last point is geometric. Let $\alpha$ be a geodesic representing some ${t}$ such that $2{t} \in  \mathrm{cut} \underline{f}^n({t})$.
Let $\alpha'$ be the geodesic in the homotopy class $\underline{f}^n({t})$. 
The geodesic $\alpha$ joins the end-point of $S^+(\gamma_{i_{0}})$ to the end-point of $S^+(\gamma_{i_{1}})$ for some $i_{0}, i_{1}$, and is otherwise disjoint from these topological half-lines. Likewise, $\alpha'$ joins the end-point of $S^+(f_{\sharp}^n \gamma_{i_{0}})$ to the end-point of $S^+(f_{\sharp}^n \gamma_{i_{1}})$ and is otherwise disjoint from these smaller half-lines. (At this point, we suggest that the reader try and draw a picture avoiding the conclusion that $-{t} \in \mathrm{cut} \underline{f}^n({t})$.)
Let $\cT$ be the family of connected components $\tau$ of $\alpha' \setminus S^+$ giving rise to an arc $\tau''$ such that $\underline{\tau}'' \in \mathrm{cut}(\underline{\alpha}')$   and $\tau''$ is homotopic to $\alpha$ (we use the notations of the definition of the map $\mathrm{cut}$). Since $\alpha'$ is a simple arc, distinct elements of $\cT$ are disjoint. By hypothesis $\cT$ contains at least two elements $\tau_{1}, \tau_{2}$, and we assume that $\tau_{2}$ comes after $\tau_{1}$ in the orientation along $\alpha'$, in other words $\alpha'$ is the concatenation of five (possibly degenerate) arcs $\sigma_{1} \tau_{1} \delta \tau_{2} \sigma_{2}$.
Let $\alpha_{0}$ be the arc included in $S^+(\gamma_{0})$ joining $\tau_{1}(0)$ and $\tau_{2}(0)$, and define $\alpha_{1}$ similarly. Denote by $R(\tau_{1}, \tau_{2})$ the closed domain surrounded by the Jordan curve $\tau_{1} \cup \tau_{2} \cup \alpha_{0} \cup \alpha_{1}$; since $\tau''_{1}$ and $\tau''_{2}$ are homotopic, this domain is disjoint from the set $\cO$.
Let $\tau_{3}\in \cT$ and assume that $\tau_{3}$ meets the interior of $R(\tau_{1}, \tau_{2})$. Then $\tau_{3}$ is included in $R(\tau_{1}, \tau_{2})$, and from this we deduce that $R(\tau_{1}, \tau_{3}) \subset R(\tau_{1}, \tau_{2})$. Since $\cT$ is a finite family we may assume that $R(\tau_{1}, \tau_{2})$ is minimal among all the $R(\tau, \tau')$ for distinct $\tau,\tau' \in \cT$: no connected component of $\alpha' \cap \inte R(\tau_{1}, \tau_{2})$ joins a point of $\alpha_{0}$ to a point of $\alpha_{1}$.  

We now argue by contradiction, assuming that $-{t} \not \in \mathrm{cut} f^n({t})$.  In particular no connected component of $\alpha' \cap  \inte  R(\tau_{1}, \tau_{2})$ joins a point of $\alpha_{1}$ to a point of $\alpha_{0}$.  
Thus there exists a simple arc $\beta$ from $\tau_{2}(0)$ to $\tau_{1}(1)$, whose interior is included in the interior of $R(\tau_{1}, \tau_{2})$ and disjoint from $\alpha'$  (to construct such an arc, start from $\tau_{2}(0)$ and follow closely $\alpha_{0}$ from the inside of $R(\tau_{1}, \tau_{2})$ until it meets a connected component of $\alpha' \setminus S^+$, then follow this component, which necessarily joins two points of $\alpha_{0}$, then follow again $\alpha_{0}$, and so on until you arrive near $\tau_{1}(0)$, and finally follow $\tau_{1}$).
We connect the end-points of $\beta$ along $\alpha'$, getting a Jordan curve $\beta \cup \delta$. This curve separates $\tau_{1}(0)$ from $\tau_{2}(1)$, thus one of these two points, say $\tau_{1}(0)$, belongs to the bounded component of $\bbR^2 \setminus (\beta \cup \delta)$. The curve $\sigma_{1}$ is disjoint from $\beta \cup \delta$, thus $\sigma_{1}(0) = \alpha'(0)$ also belongs to this bounded component. On the other hand remember that $\alpha \in \cG_{0}$ is disjoint from $S^+(\gamma_{i_{0}})$ except at its end-points, and thus (since geodesics are in minimal position) $\alpha' = f_{\sharp}^n (\alpha)$ is disjoint from  the properly embedded half-line $S^+(f_{\sharp}^n \gamma_{i_{0}})$ except at its end-points.
Thus $S^+(f_{\sharp}^n \gamma_{i_{0}})$ is disjoint from $R(\tau_{1}, \tau_{2})$ which contains $\beta$. It is also disjoint from $\delta$. The point $\alpha'(0)$ is the end-point of $S^+(f_{\sharp}^n \gamma_{i_{0}})$, it may not be in the bounded component of $\bbR^2 \setminus (\beta \cup \delta)$. This is a contradiction.
\end{proof}

\subsection{Conclusion}
Applying the last lemma provides some $\alpha$ (with distinct end-points) and a positive integer $n$ such that $-\underline{\alpha} \in \mathrm{cut} \underline{f}^n(\underline{\alpha})$. We now apply proposition~\ref{prop.fixed-point} to get a fixed point for $f$. This completes the proof of the theorem.

\bigskip
\bigskip
\bigskip

\section{Orbit diagrams}
\label{sec.orbit-diagrams}

In this section we briefly discuss the possibility of an invariant describing the way orbits of a Brouwer homeomorphism are ``crossing each others''. In other words, we would like to  classify the finite families of orbits of Brouwer homeomorphisms, from the point of view of homotopy Brouwer theory.

The above proposition~\ref{prop.handel-matsumoto} is a special case of the following more general result of Handel (that we will not prove)\footnote{This theorem does not appear explicitly in~\cite{handel1999fixed}, but may be obtained as follows from results in that paper.
Proposition 6.6 provides the existence of the $\delta_{i},\gamma_{i}$ without the ``homotopy disjointness'' required by the last sentence of the theorem.
Then Lemma 4.6 allows to gather the $\gamma_{i}$'s whose forward homotopy streamlines are not disjoint, giving another family $\gamma'_{1}, \dots, \gamma'_{r'}$ of generalized homotopy translation arcs (see the definition in Handel's paper), with $r' \leq r$, each $\gamma'_{i}$ meeting one or several orbits in $\cO$. From $\gamma'_{j}$ one can construct a third family $\gamma''_{1}, \dots, \gamma''_{r}$ such that the forward homotopy streamlines $S^+(\gamma_{i})$ are pairwise disjoint (from each generalized translation arc $\gamma'_{i}$ we construct several homotopy translation arcs which are pairwise disjoint and have representatives inside a small neighborhood of $\gamma'_{i}\cup f(\gamma'_{i})$). Similarly we get a family of pairwise homotopically disjoint backward homotopy streamlines $S^-(\delta''_{i})$. By properness we may choose some integer $N$ such that for every $i$ and every $n \geq 2N$, $f^n(\gamma_{i})$ is homotopically disjoint from $\delta_{j}$. This gives the homotopy disjointness property.}.

\begin{theo*}
Assume $f$ is fixed point free, and let $\cO(x_{1}, \dots , x_{r})$ be the union of finitely many orbits of $f$.
Then property $(H'_{1})$ holds.
\end{theo*}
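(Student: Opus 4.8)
The plan is to follow Handel's original route and obtain $(H'_{1})$ from three successive steps: first produce forward- and backward-proper homotopy translation arcs for each orbit, with no disjointness required; then repair the lack of disjointness inside the forward family, and symmetrically inside the backward family; and finally push far enough into the future (resp. past) so that the forward half-streamlines become disjoint from the backward ones. The core difficulty lies entirely in the first step, which genuinely needs Handel's machinery of reducing lines and ``fitted families that disappear under iteration''; the later steps are comparatively soft.

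\emph{Step 1 (half-proper homotopy translation arcs).} First I would show that every $x_{i}$ carries a forward proper homotopy translation arc $\gamma_{i}^{0}$ and a backward proper one $\delta_{i}^{0}$. The argument for $r=1$ given above does not extend verbatim: there we combined the uniqueness of the homotopy class of translation arcs (proposition~\ref{prop.translation-arcs}) with the fact that a point near infinity admits a translation arc near infinity, but here the ambient set $\cO$ contains the other orbits, and a classical translation arc for $x_{i}$ need no longer be forward proper \emph{relative to all of} $\cO$ (as the last example of section~\ref{sec.hta} illustrates). Instead I would invoke Handel's Proposition~6.6 of~\cite{handel1999fixed}: starting from a classical translation arc one inspects the sequence of geodesics homotopic to its iterates; if this sequence fails to be proper one extracts a \emph{reducing line}, a properly embedded line homotopically disjoint from the arc and all its iterates and separating $\cO$, surgers the arc across it, and checks that a suitable complexity strictly decreases, so that finitely many surgeries produce a genuinely forward proper arc. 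I would treat this reducing-line construction as a black box; it is the main obstacle.

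\emph{Step 2 (making each family disjoint).} The half-streamlines $S^{+}(\gamma_{i}^{0})$ need not be pairwise homotopically disjoint. Here I would use Handel's amalgamation lemma (Lemma~4.6 of~\cite{handel1999fixed}): whenever two forward homotopy half-streamlines cross, the two translation arcs can be merged into a single \emph{generalized} homotopy translation arc, an arc meeting several orbits at once, strictly decreasing the number of objects. After finitely many amalgamations one gets generalized forward-proper homotopy translation arcs $\gamma'_{1},\dots,\gamma'_{r'}$ with $r'\le r$ and pairwise disjoint half-streamlines, the sets of orbits they meet partitioning $\{1,\dots,r\}$. Inside a thin neighbourhood of $\gamma'_{j}\cup f(\gamma'_{j})$ one splits $\gamma'_{j}$ back into one ordinary homotopy translation arc per orbit it meets; if the neighbourhood is thin enough these can be taken pairwise disjoint and still forward proper, giving $\gamma''_{1},\dots,\gamma''_{r}$ with the $S^{+}(\gamma''_{i})$ pairwise homotopically disjoint. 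The mirror argument on negative iterates yields $\delta''_{1},\dots,\delta''_{r}$ with the $S^{-}(\delta''_{i})$ pairwise homotopically disjoint.

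\emph{Step 3 (separating forward from backward) and conclusion.} It remains to arrange that no forward half-streamline crosses a backward one. Since each $\gamma''_{i}$ is forward proper and each $\delta''_{i}$ backward proper, the characterization of homotopic properness (the exercise following the definitions in section~\ref{ss.hta-def}: a sequence is homotopically proper iff it is eventually homotopically disjoint from every fixed essential curve) provides, for each pair $i,j$, a threshold beyond which $\underline{f}^{n}(\underline{\gamma}''_{i})\cap \underline{\delta}''_{j}=\emptyset$ and $\underline{f}^{-n}(\underline{\delta}''_{i})\cap \underline{\gamma}''_{j}=\emptyset$. Take $N$ larger than all these finitely many thresholds and set $\gamma_{i}:=f^{N}(\gamma''_{i})$, joining $f^{N}x_{i}$ to $f^{N+1}x_{i}$, and $\delta_{i}:=f^{-N-1}(\delta''_{i})$, joining $f^{-N-1}x_{i}$ to $f^{-N}x_{i}$. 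Then the forward iterates of the $\gamma_{i}$ are tails of the $S^{+}(\gamma''_{i})$ and the backward iterates of the $\delta_{i}$ are tails of the $S^{-}(\delta''_{i})$, hence pairwise homotopically disjoint within each family by Step~2, and disjoint across the two families by the choice of $N$. This is exactly property $(H'_{1})$.
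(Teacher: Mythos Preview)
Your proposal is correct and follows essentially the same route as the paper. Note that the paper does \emph{not} give a proof of this theorem; it explicitly states ``that we will not prove'' and relegates the argument to a footnote sketch. Your three steps match that sketch almost word for word: Step~1 is the paper's appeal to Proposition~6.6 of~\cite{handel1999fixed}, Step~2 is the amalgamation via Lemma~4.6 followed by the splitting inside a thin neighbourhood of $\gamma'_{j}\cup f(\gamma'_{j})$, and Step~3 is the properness argument for choosing $N$. The only cosmetic difference is that the paper phrases Step~3 as ``$f^{n}(\gamma_{i})$ is homotopically disjoint from $\delta_{j}$ for $n\ge 2N$'' rather than invoking the exercise on homotopic properness, but these are equivalent.
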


Assume as above that $f$ is a Brouwer homeomorphism, and that property$(H'_{1})$ is satisfied. 
As before we consider the $2r$ properly embedded half-lines
$$
S^-(\delta_{1}), S^+(\gamma_{1}), \dots , S^-(\delta_{r}), S^+(\gamma_{r}).
$$

As in hypothesis $(H'_{2})$ at the beginning of the previous section,   on this set of pairwise disjoint properly embedded half-lines, we consider the cyclic order at infinity. As for flows, it is convenient to represent this order by placing pairwise distinct points $\alpha_{1}, \omega_{1}, \dots, \alpha_{r}, \omega_{r}$ on a circle and drawing a chord from $\alpha_{i}$ to $\omega_{i}$. Let us denote this diagram by $\cD(f, \delta_{1}, \gamma_{1}, \dots , \delta_{r}, \gamma_{r})$.

We would like this to be an invariant, that is, to depend only on the map $f$ and the points  $x_{1}, \dots, x_{r}$. Unfortunately this is not the case.
Consider the easy case of two orbits $\cO(x_{1}), \cO(x_{2})$ for the translation (first picture in section~\ref{sec.hta}.\ref{ssec.examples}). From these data we may obtain four diagrams, depending on the choice of the family of proper homotopy translation arcs.
In the case of the Reeb flow (second picture in section~\ref{sec.hta}.\ref{ssec.examples}), however, as the homotopy class of translation arcs is unique, we always get the same diagram.

Consider a combinatorial diagram $\cD_{0}$ of oriented chords $[\alpha_{i}, \omega_{i}]$ of the circle. Assume there is two end-points of the same type, say $\alpha_{i}, \alpha_{j}$, which are adjacent in the cyclic order. Then we may obtain a new diagram $\cD_{1}$ by exchanging $\alpha_{i}$ and $\alpha_{j}$ in the cyclic order. We will say that $\cD_{1}$  is obtain from $\cD_{0}$ by an \emph{elementary operation}.
It can be proved that if $\cD_{0} = \cD(f, \delta_{1}, \gamma_{1}, \dots , \delta_{r}, \gamma_{r})$ is some diagram for $(f, x_{1}, \dots , x_{r})$, then any diagram obtained 
from $\cD_{0}$ by performing a sequence of elementary operations is a diagram $\cD(f, \delta'_{1}, \gamma'_{1}, \dots , \delta'_{r}, \gamma'_{r})$ for the same points (but for different choices of homotopy classes of homotopy translation arcs). 

\begin{exercise}
Consider the Reeb map with $r=3$, as in the examples of section~\ref{sec.hta}. Choose a family of homotopy translation arcs as in hypothesis $(H'_{1})$, draw the associated diagram. Perform an elementary operation on this diagram, and find another  family of homotopy translation arcs, still satisfying hypothesis $(H'_{1})$, and corresponding to this new diagram. Do this for the four possible diagrams.
\end{exercise}

Conversely, we may conjecture that elementary operations allow to describe all possible diagrams associated to $(f, x_{1}, \dots , x_{r})$. Let us put this another way. Consider again some abstract diagram $\cD_{0}$. The \emph{reduced diagram} associated to $\cD_{0}$, say $\cD_{0}^R$, is obtained from $\cD_{0}$ by identifying all the vertices of the same type ($\alpha$ or $\omega$) that are adjacent.  
For example, for the translation with two orbits, starting from any of the four diagrams we get as a reduced diagram the diagram with a single chord of multiplicity two, whereas for the Reeb case, the reduced diagram coincides with the unreduced diagram.
The conjecture says that  given two different choices of homotopy translation arcs 
$$
\delta_{1}, \gamma_{1}, \dots , \delta_{r}, \gamma_{r} \mbox{ and }
\delta'_{1}, \gamma'_{1}, \dots , \delta'_{r}, \gamma'_{r}
$$
associated to the same data $(f, x_{1}, \dots , x_{r})$, the reduced diagrams coincides,
$$
\cD(f, \delta_{1}, \gamma_{1}, \dots , \delta_{r}, \gamma_{r})^R = 
\cD(f, \delta'_{1}, \gamma'_{1}, \dots , \delta'_{r}, \gamma'_{r})^R. 
$$
If the conjecture holds, then the reduced diagram is an invariant of homotopy Brouwer theory associated to $(f, x_{1}, \dots , x_{r})$. This invariant would describe in a natural way ``the way that the orbits crosses each others.''
Another (probably much harder) conjecture says that this a total invariant. In other words, assume that the two sets of data $(f, x_{1}, \dots , x_{r})$ and $(f', x'_{1}, \dots , x'_{r})$ give rise to the same reduced diagram. Then the data should be equivalent from homotopy Brouwer theory viewpoint, which means that
there exists a homeomorphisms $\Phi$ that sends each point $x_{i}$ on the point $x'_{i}$, and such that the homeomorphisms $\Phi f \Phi^{-1}$ and $f'$ are isotopic relative to $\cO(x'_{1}, \dots , x'_{r})$ (the ``braid types'' are the same).


\bigskip
\bigskip
\bigskip

\section{Appendix 1: geodesics}

 We consider a locally finite countable subset $\cO$ of the plane, and the set $\cA$ of \emph{essential simple curves} (see section~\ref{sec.hta}.\ref{ss.hta-def}). 
In this text we make use of the existence of a subset $\cH$ of elements of the set $\cA$ called \emph{geodesics}
with the following properties. Two  curves $\alpha,\beta \in \cA$ 
 are in \emph{minimal position} if they are topologically transverse\footnote{In the neighbourhood of every intersection point, up to a homeomorphism, $\alpha$ is a vertical segment and $\beta$ is a horizontal segment.} and every connected component of $\bbR^2 \setminus (\alpha \cup \beta)$ whose boundary is made of exactly one piece of $\alpha$ and one piece of $\beta$ contains at least one element of $\cO$.
\begin{exercise}
Prove that $\alpha$ and $\beta$ are in minimal position if and only if for every $\alpha',\beta'$ homotopic respectively  to $\alpha,\beta$,  
$$
\sharp \alpha' \cap \beta' \geq \sharp \alpha \cap \beta. 
$$ 
Hint: use the universal cover of $\bbR^2 \setminus \cO$, and prove that, when they are in minimal position, $\sharp \alpha \cap \beta$ is equal to the number of lifts of $\beta$ that separates the  beginning and the end of a lift of $\alpha$.
\end{exercise}


 A family $\{\alpha_{n}\}$ of curves is said to be \emph{proper} (or \emph{locally finite}) if every compact set $K$ meets only a finite number of $\alpha_{n}$'s.
We denote by $\mathrm{Homeo}_{0}(\bbR^2, \cO)$ the connected component of the identity within the space of homeomorphisms of the plane that fixe $\cO$ point-wise (an element of this group is said to be isotopic to the identity relative to $\cO$).
\begin{enumerate} 
\item Each homotopy class in $\cA$ contains a unique element of $\cH$, that is, the map $\alpha \mapsto \underline{\alpha}$ from $\cH$ to $\underline \cA$ is one-to-one and onto. 

\item Every couple of curves $\alpha,\beta \in \cH$ with $\alpha \neq \pm \beta$
 is in minimal position. In particular, if $\alpha$ and $\beta$ are homotopically disjoint then $\alpha \cap \beta \subset \cO$.

\item Let $\{\alpha_{i}\}$ be an at most countable family of pairwise non-homotopic and disjoint curves in $\cA$ 
 which is locally finite. Then there exists $h \in \mathrm{Homeo}_{0}(\bbR^2, \cO)$ such that all the $h(\alpha_{i})$'s belong to $\cH$.

\item 
More generally, let $\{\alpha_{i}\}$ be as in the previous item, and 
let $\{\beta_{j}\}$ having the same properties.  Assume that every curve $\alpha_{i}$ is  non-homotopic to and in minimal position with every curve $\beta_{j}$. Then there exists $h \in \mathrm{Homeo}_{0}(\bbR^2, \cO)$ such that all the $h(\alpha_{i})$'s, $h(\beta_{j})$'s belong to $\cH$.

\item Let  $(\alpha_{n})_{n \geq 0}$ be a sequence  in $\cA$,
 and for every $n$ let $\alpha'_{n}$ be the element of $\cH$
   homotopic to $\alpha_{n}$. If  $(\alpha_{n})_{n \geq 0}$ is homotopically proper then  $(\alpha'_{n})_{n \geq0}$ is proper.

\end{enumerate}
\begin{exercise}
Prove  that the last item (given the firsts) is equivalent to the following property. Number the elements of $\cO$ so that $\cO = \{u_{n}, n \geq 0\}$.
Let $(D_{n})_{n \geq 0}$ be the increasing sequence of topological disks with geodesic boundary (\emph{i.e.} the curves $\partial D_{n}$ belong to $\cH'$). Then the sequence 
$(\partial D_{n})_{n \geq 0}$ is proper.
\end{exercise}

We also make use (in the proof of the crucial proposition~\ref{prop.fixed-point}) of the existence of a circle compactification of the universal cover of $\bbR^2 \setminus \cO$ with nice properties. Let $\bbH^2$ denotes the open unit disk with boundary $\partial \bbH^2 = \bbS^1$.
\pagebreak
\begin{theo*}
There exists a (universal) covering $\pi : \bbH^2 \to \bbR^2 \setminus \cO$ with the following properties.
\begin{enumerate}
\item Every lift $\tilde h $ of a homeomorphism $h : \bbR^2 \setminus \cO \to \bbR^2 \setminus \cO$ extends to a homeomorphism of $\bbH^2 \cup \partial \bbH^2$. 
Furthermore if there is an isotopy $I$ relative to $\cO$ from $h$ to $h'$ (that is, a path $(\Phi_{t} \circ h)$ where $(\Phi_{t})_{t \in [0,1]}$ is a path in $\mathrm{Homeo}_{0}(\bbR^2, \cO)$ starting at the identity), and if we lift $I$ to get an isotopy from a lift $\tilde h$ to a lift $\tilde h'$, then $\tilde h$ and $\tilde h'$ have the same extension to the boundary.
\item Every lift $\tilde \alpha$ of a curve $\alpha$ in $\cA$ 
 (or more precisely of the restriction of $\alpha$ to $(0,1)$) 
 admits end-points $\lim_{t \to 0} \alpha(t)$ and $\lim_{t \to 1} \alpha(t)$ in $\partial \bbD^2$.  Furthermore if there is an isotopy $I$ relative to $\cO$ from $\alpha$ to $\alpha'$, and if we lift $I$ to get an isotopy from a lift $\tilde \alpha$ to a lift $\tilde \alpha'$, then $\tilde \alpha$ and $\tilde \alpha'$ have the same end-points. 
\end{enumerate}
\end{theo*}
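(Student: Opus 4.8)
The plan is to obtain the covering and the circle compactification from a complete hyperbolic metric on $S:=\bbR^2\setminus\cO$, and then to read off the two properties from the geometry of the cusps and from the density in $\partial\bbH^2$ of those boundary points one can control by hand.

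First I would fix a complete hyperbolic metric on $S$ --- which exists since, $\cO$ having at least two points, $S$ is not one of the exceptional surfaces carrying no complete hyperbolic metric --- chosen so that every point of $\cO$ is a cusp and so that the associated Fuchsian group $\Gamma$ is non-elementary of the first kind; then the universal cover is $\bbH^2$ and $\bbH^2\cup\partial\bbH^2$ is the closed disk. (For $\cO$ finite this is just the complete finite-area metric on the punctured sphere $\SS^2\setminus(\cO\cup\{\infty\})$; in general one glues ideal pieces without flaring ends.) Two families of boundary points enter the argument: each cusp $p\in\cO$ has an embedded horocyclic neighbourhood $V_p\cong\bbS^1\times[0,+\infty)$ whose preimage under $\pi$ is a union of horoballs, each touching $\partial\bbH^2$ at one parabolic point; and each hyperbolic $\gamma\in\Gamma$ has an axis $A_\gamma$, projecting to a closed geodesic of $S$, with endpoints the two fixed points of $\gamma$. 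By the first-kind hypothesis the set $F$ of fixed points of hyperbolic elements is dense in $\partial\bbH^2$.

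For the second property I would argue that a curve $\alpha\in\cA$ joining $p,q\in\cO$ restricts to a proper arc into $S$ which, near $t=0$, enters $V_p$ and stays there; hence a lift $\tilde\alpha$ eventually lies in a single horoball $H_0$ over $V_p$, so $\lim_{t\to0}\tilde\alpha(t)$ is the parabolic point of $H_0$, and symmetrically at $t=1$. For the isotopy clause, along a homotopy $\alpha_s$ relative to $\cO$ the endpoints stay at $p$ and $q$, so compactness of $[0,1]^2$ gives $\varepsilon>0$ with $\alpha_s\big((0,\varepsilon]\big)\subset V_p$ for all $s$; lifting the homotopy, the horoball containing $\tilde\alpha_s\big((0,\varepsilon]\big)$ is the one containing the continuously varying point $\tilde\alpha_s(\varepsilon/2)$, hence is locally constant in $s$, hence independent of $s$, and so is the endpoint. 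Same at $t=1$.

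The first property is the real work, and the hard part is precisely the continuous extension of a lift to the circle: a homeomorphism $h$ of $S$ need not be quasi-isometric for the hyperbolic metric, so the boundary values are not free and have to be forced from the equivariance. I would take a lift $\tilde h$, observe that it normalises $\Gamma$ with induced automorphism $h_*$ carrying peripheral classes to peripheral classes (because $h$ permutes the punctures), hence hyperbolic classes to hyperbolic classes. Then $\tilde h(A_\gamma)$ is an embedded line invariant under the hyperbolic element $h_*\gamma$, so it stays at bounded Hausdorff distance from $A_{h_*\gamma}$ and converges to the two fixed points of $h_*\gamma$; this defines $\tilde h_\partial$ on the dense set $F$, and it is order-preserving there since $\tilde h$ is an orientation-preserving homeomorphism of the disk. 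An order-preserving injection of a dense subset of the circle extends uniquely to a continuous monotone self-map, which is the wanted extension $\tilde h_\partial$. To see that $\tilde h\cup\tilde h_\partial$ is continuous at a boundary point $\xi$ I would squeeze: approach $\xi$ from the two sides by disjoint axes $A_n^\pm$ and let $U_n$ be the component of $\bbH^2\setminus(A_n^-\cup A_n^+)$ with $\xi$ in its closure, so that the $\overline{U_n}$ form a neighbourhood basis of $\xi$ in $\bbH^2\cup\partial\bbH^2$; the images $\tilde h(U_n)$ are the components cut off by the lines $\tilde h(A_n^\pm)$, whose endpoints are the $\tilde h_\partial$-images of the endpoints of $A_n^\pm$ and hence converge to $\tilde h_\partial(\xi)$, and since $\bigcap_n U_n=\emptyset$ in $\bbH^2$ the closures $\overline{\tilde h(U_n)}$ decrease to $\{\tilde h_\partial(\xi)\}$, which is the desired continuity; running the same with $\tilde h^{-1}$ makes the extension a homeomorphism. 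Finally, if $h$ and $h'$ are isotopic relative to $\cO$ and $\tilde h,\tilde h'$ are the lifts joined by the lifted isotopy, they normalise $\Gamma$ with the \emph{same} automorphism $h_*$, so for every hyperbolic $\gamma$ both send $A_\gamma$ to a line invariant under the fixed element $h_*\gamma$, hence with the same endpoints; thus $\tilde h_\partial=\tilde h'_\partial$ on $F$, therefore on $\partial\bbH^2$ --- and the same reasoning, with the horoball over $V_p$ replacing $A_\gamma$, re-proves the isotopy clause of the second property.
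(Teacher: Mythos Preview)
Your proof is correct and is exactly the approach the paper points to: the paper does not give an argument itself but defers to Casson--Bleiler and Matsumoto, noting only that the crucial input is a complete hyperbolic metric on $\bbR^2\setminus\cO$ whose Fuchsian group $\Gamma$ is of the first kind --- precisely the framework you set up and then exploit via axes of hyperbolic elements and their dense endpoint set. The one place to tidy is the neighbourhood-basis construction at $\xi\in\partial\bbH^2$: a single nested sequence of axes $A_n$, each with one endpoint on either side of $\xi$ and both converging to $\xi$ (such axes exist because, for $\Gamma$ of the first kind, axis-endpoint pairs are dense in $\partial\bbH^2\times\partial\bbH^2$), already cuts off the required basis of half-discs, whereas the two-axis formulation with $A_n^\pm$ is a bit muddled as written.
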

We will abuse notation and denotes the end-points of a lift $\tilde \alpha$  by $\tilde \alpha(0)$ and $\tilde \alpha(1)$.

\bigskip

The key to these properties is the existence of a hyperbolic structure on the surface $\bbR^2 \setminus \cO$. This structure may be obtained by gluing together two copies of an infinite-sided hyperbolic polygon (see~\cite{handel1999fixed}, section~3). Thus $\bbR^2 \setminus \cO$ appears as the quotient of the hyperbolic plane $\bbH^2$ by a discrete group $\Gamma$ of hyperbolic isometries acting freely. A crucial point here, as explicited by Matsumoto in~\cite{matsumoto2000arnold}, is that $\Gamma$ is \emph{of the first type}, which means that the orbit $\Gamma.x$ of any point $x \in \bbH^2$ accumulates on the whole boundary $\partial \bbH^2$. This property is involved in particular in the uniqueness and properness of geodesics (property 1 and 5 in the above list); the above extension theorem holds only under this additional property. The references for the proofs are the book by Bleiler and Casson (\cite{cassonbleiler}) and the previously quoted paper by Matsumoto.

\section{Appendix 2: pictures of a fitted family}
~

\begin{minipage}{0.5\textwidth}
\begin{center}
\fbox{
\begin{tikzpicture}[scale=1] 
\tikzstyle{fleche}=[>=latex,->]
\draw [thick] (0,0) circle (3cm) ;
\foreach \i in {1,2,3}  
{\draw  (\i*120+10:3) -- (\i*120+110:3)  ;
\draw (\i*120+10:3.4) node {$\alpha_{\i}$};
\draw  (\i*120+230:3.4) node {$\omega_{\i}$};
\draw [dotted,rotate=\i*120-120]   (43+120:1.6) ellipse (0.6 and 0.1);
\draw [dotted,rotate=\i*120-120]   (-43-120:1.6) ellipse (0.6 and 0.1);

\filldraw [rotate=\i*120] (-1.15,0.47) circle (0.05) ;
\draw [rotate=\i*120] (-1.93,0.47) circle (0.05) ;

\draw [rotate=\i*120] (-1.15,-0.47) circle (0.05) ;
\filldraw [rotate=\i*120] (-1.93,-0.47) circle (0.05) ;

\filldraw [rotate=\i*120] (0:2.05) circle (0.05) ;
\draw [rotate=\i*120+120] (0:2.3) node {$x_{\i}$} ;

\draw [rotate=\i*120,very thick] (1*120+10:3) -- (-1.93,0.67)  ;
\filldraw [rotate=\i*120]  (-1.93,0.67) circle (0.05) ;
\draw [rotate=\i*120,very thick] (-1.93,0.67) .. controls +(0,-0.3) and +(0,0.2) .. (-1.15,0.47) ;
\draw [rotate=\i*120,very thick,fleche] (-1.93,-0.47)  -- (1*120+110:3) ; 
}

\def \k {2.3}
\draw [rounded corners=0.5cm, dotted] (0:\k) -- (120:\k) -- (240:\k) -- cycle;

\foreach \i in {1}  
{

\path (-1.93,0.67) .. controls +(0,-0.3) and +(0,0.2) .. (-1.15,0.47) node [pos=0.5, above] {$\delta_{1}$} ;
\draw [dashed, very thick] (-1.15,0.48)  -- (-1.02,-1.82);
}
\end{tikzpicture} 
}
$f(\delta_{1})$
\end{center}
\end{minipage}
\begin{minipage}{0.5\textwidth}
\begin{center}
\fbox{
\begin{tikzpicture}[scale=1] 
\tikzstyle{fleche}=[>=latex,->]
\draw [thick] (0,0) circle (3cm) ;
\foreach \i in {1,2,3}  
{\draw  (\i*120+10:3) -- (\i*120+110:3)  ;
\draw (\i*120+10:3.4) node {$\alpha_{\i}$};
\draw  (\i*120+230:3.4) node {$\omega_{\i}$};
\draw [dotted,rotate=\i*120-120]   (43+120:1.6) ellipse (0.6 and 0.1);
\draw [dotted,rotate=\i*120-120]   (-43-120:1.6) ellipse (0.6 and 0.1);

\filldraw [rotate=\i*120] (-1.15,0.47) circle (0.05) ;
\draw [rotate=\i*120] (-1.93,0.47) circle (0.05) ;

\draw [rotate=\i*120] (-1.15,-0.47) circle (0.05) ;
\filldraw [rotate=\i*120] (-1.93,-0.47) circle (0.05) ;

\filldraw [rotate=\i*120] (0:2.05) circle (0.05) ;
\draw [rotate=\i*120+120] (0:2.3) node {$x_{\i}$} ;

\draw [rotate=\i*120,very thick] (1*120+10:3) -- (-1.93,0.67)  ;
\filldraw [rotate=\i*120]  (-1.93,0.67) circle (0.05) ;
\draw [rotate=\i*120,very thick] (-1.93,0.67) .. controls +(0,-0.3) and +(0,0.2) .. (-1.15,0.47) ;
\draw [rotate=\i*120,very thick,fleche] (-1.93,-0.47)  -- (1*120+110:3) ; 
}

\def \k {2.3}
\draw [rounded corners=0.5cm, dotted] (0:\k) -- (120:\k) -- (240:\k) -- cycle;

\foreach \i in {1}  
{

\path (-1.93,0.67) .. controls +(0,-0.3) and +(0,0.2) .. (-1.15,0.47) node [pos=0.5, above] {$\delta_{1}$} ;
\draw [dashed, very thick] (-1.15,0.48)  -- (-1.02,-1.82);
\draw [dashed, very thick]  (-1.02,-1.82) .. controls +(0:0.5) and +(180:0.5) .. (0,-0.7) .. controls +(0:0.5) and +(180:0.5) .. (1.35,-1.43) ;
}
\end{tikzpicture} 
}\\
$f(\delta_{1}), f^2(\delta_{1})$
\end{center}
\end{minipage}

\bigskip

\begin{minipage}{0.5\textwidth}
\begin{center}
\fbox{
\begin{tikzpicture}[scale=1] 
\tikzstyle{fleche}=[>=latex,->]
\draw [thick] (0,0) circle (3cm) ;
\foreach \i in {1,2,3}  
{\draw  (\i*120+10:3) -- (\i*120+110:3)  ;
\draw (\i*120+10:3.4) node {$\alpha_{\i}$};
\draw  (\i*120+230:3.4) node {$\omega_{\i}$};
\draw [dotted,rotate=\i*120-120]   (43+120:1.6) ellipse (0.6 and 0.1);
\draw [dotted,rotate=\i*120-120]   (-43-120:1.6) ellipse (0.6 and 0.1);

\filldraw [rotate=\i*120] (-1.15,0.47) circle (0.05) ;
\draw [rotate=\i*120] (-1.93,0.47) circle (0.05) ;

\draw [rotate=\i*120] (-1.15,-0.47) circle (0.05) ;
\filldraw [rotate=\i*120] (-1.93,-0.47) circle (0.05) ;

\filldraw [rotate=\i*120] (0:2.05) circle (0.05) ;

\draw [rotate=\i*120,very thick] (1*120+10:3) -- (-1.93,0.67)  ;
\filldraw [rotate=\i*120]  (-1.93,0.67) circle (0.05) ;
\draw [rotate=\i*120,very thick] (-1.93,0.67) .. controls +(0,-0.3) and +(0,0.2) .. (-1.15,0.47) ;
\draw [rotate=\i*120,very thick,fleche] (-1.93,-0.47)  -- (1*120+110:3) ; 
}

\def \k {2.3}
\draw [rounded corners=0.5cm, dotted] (0:\k) -- (120:\k) -- (240:\k) -- cycle;

\foreach \i in {1}  
{

\path (-1.93,0.67) .. controls +(0,-0.3) and +(0,0.2) .. (-1.15,0.47) node [pos=0.5, above] {$\delta_{1}$} ;
\draw [dashed, very thick] (-1.15,0.48)  -- (-1.02,-1.82);
\draw [dashed, very thick]  (-1.02,-1.82) .. controls +(0:0.5) and +(180:0.5) .. (0,-0.7) .. controls +(0:0.5) and +(180:0.5) .. (1.35,-1.43) ;
\draw  [dashed, very thick,rotate=\i*120]  (-1.93,-0.87) .. controls +(-50:1) and +(-135:0.3) ..   (-1,-1.9) 
.. controls +(45:0.3) and +(-50:1) .. (-1.93,-0.47)  ;
\draw [rotate=120] (-1.93,-0.87) circle (0.05) ;
}
\end{tikzpicture} 
}\\
$f(\delta_{1}), f^2(\delta_{1}), f^3(\delta_{1})$
\end{center}
\end{minipage}
\begin{minipage}{0.5\textwidth}
\begin{center}
\fbox{
\begin{tikzpicture}[scale=1] 
\tikzstyle{fleche}=[>=latex,->]
\draw [thick] (0,0) circle (3cm) ;
\foreach \i in {1,2,3}  
{\draw  (\i*120+10:3) -- (\i*120+110:3)  ;
\draw (\i*120+10:3.4) node {$\alpha_{\i}$};
\draw  (\i*120+230:3.4) node {$\omega_{\i}$};
\draw [dotted,rotate=\i*120-120]   (43+120:1.6) ellipse (0.6 and 0.1);
\draw [dotted,rotate=\i*120-120]   (-43-120:1.6) ellipse (0.6 and 0.1);

\filldraw [rotate=\i*120] (-1.15,0.47) circle (0.05) ;
\draw [rotate=\i*120] (-1.93,0.47) circle (0.05) ;

\draw [rotate=\i*120] (-1.15,-0.47) circle (0.05) ;
\filldraw [rotate=\i*120] (-1.93,-0.47) circle (0.05) ;

\filldraw [rotate=\i*120] (0:2.05) circle (0.05) ;

\draw [rotate=\i*120,very thick] (1*120+10:3) -- (-1.93,0.67)  ;
\filldraw [rotate=\i*120]  (-1.93,0.67) circle (0.05) ;
\draw [rotate=\i*120,very thick] (-1.93,0.67) .. controls +(0,-0.3) and +(0,0.2) .. (-1.15,0.47) ;
\draw [rotate=\i*120,very thick,fleche] (-1.93,-0.47)  -- (1*120+110:3) ; 
}

\def \k {2.3}
\draw [rounded corners=0.5cm, dotted] (0:\k) -- (120:\k) -- (240:\k) -- cycle;

\foreach \i in {1}  
{

\path (-1.93,0.67) .. controls +(0,-0.3) and +(0,0.2) .. (-1.15,0.47) node [pos=0.5, above] {$\delta_{1}$} ;
\draw  [dashed, very thick,rotate=\i*120]  (-1.93,-0.87) .. controls +(-50:1) and +(-135:0.3) ..   (-1,-1.9) 
.. controls +(45:0.3) and +(-50:1) .. (-1.93,-0.47)  ;
\draw [rotate=120] (-1.93,-0.87) circle (0.05) ;
}
\end{tikzpicture} 
}\\
$f^3(\delta_{1})$
\end{center}
\end{minipage}

\bigskip

\begin{minipage}{0.5\textwidth}
   \begin{center}
\fbox{
\begin{tikzpicture}[scale=1] 
\tikzstyle{fleche}=[>=latex,->]
\draw [thick] (0,0) circle (3cm) ;
\foreach \i in {1,2,3}  
{\draw  (\i*120+10:3) -- (\i*120+110:3)  ;
\draw (\i*120+10:3.4) node {$\alpha_{\i}$};
\draw  (\i*120+230:3.4) node {$\omega_{\i}$};
\draw [dotted,rotate=\i*120-120]   (43+120:1.6) ellipse (0.6 and 0.1);
\draw [dotted,rotate=\i*120-120]   (-43-120:1.6) ellipse (0.6 and 0.1);

\filldraw [rotate=\i*120] (-1.15,0.47) circle (0.05) ;
\draw [rotate=\i*120] (-1.93,0.47) circle (0.05) ;

\draw [rotate=\i*120] (-1.15,-0.47) circle (0.05) ;
\filldraw [rotate=\i*120] (-1.93,-0.47) circle (0.05) ;

\filldraw [rotate=\i*120] (0:2.05) circle (0.05) ;

\draw [rotate=\i*120,very thick] (1*120+10:3) -- (-1.93,0.67)  ;
\filldraw [rotate=\i*120]  (-1.93,0.67) circle (0.05) ;
\draw [rotate=\i*120,very thick] (-1.93,0.67) .. controls +(0,-0.3) and +(0,0.2) .. (-1.15,0.47) ;
\draw [rotate=\i*120,very thick,fleche] (-1.93,-0.47)  -- (1*120+110:3) ; 
}

\def \k {2.3}
\draw [rounded corners=0.5cm, dotted] (0:\k) -- (120:\k) -- (240:\k) -- cycle;

\foreach \i in {1}  
{

\path (-1.93,0.67) .. controls +(0,-0.3) and +(0,0.2) .. (-1.15,0.47) node [pos=0.5, above] {$\delta_{1}$} ;
\draw [rotate=120] (-1,-1.3) node {$t_1$};
\draw  [dashed, very thick,rotate=\i*120]  (-1.93,-0.47) .. controls +(-70:1) and +(-135:0.3) ..   (-1,-1.9) 
.. controls +(45:0.3) and +(-50:1) .. (-1.93,-0.47) ;

}
\end{tikzpicture} 
}\\

$t_{1} := \mathrm{cut}  f^3(\delta_{1})$

\end{center}
\end{minipage}
\begin{minipage}{0.5\textwidth}
\begin{center}
\fbox{
\begin{tikzpicture}[scale=1] 
\tikzstyle{fleche}=[>=latex,->]
\draw [thick] (0,0) circle (3cm) ;
\foreach \i in {1,2,3}  
{\draw  (\i*120+10:3) -- (\i*120+110:3)  ;
\draw (\i*120+10:3.4) node {$\alpha_{\i}$};
\draw  (\i*120+230:3.4) node {$\omega_{\i}$};
\draw [dotted,rotate=\i*120-120]   (43+120:1.6) ellipse (0.6 and 0.1);
\draw [dotted,rotate=\i*120-120]   (-43-120:1.6) ellipse (0.6 and 0.1);

\filldraw [rotate=\i*120] (-1.15,0.47) circle (0.05) ;
\draw [rotate=\i*120] (-1.93,0.47) circle (0.05) ;

\draw [rotate=\i*120] (-1.15,-0.47) circle (0.05) ;
\filldraw [rotate=\i*120] (-1.93,-0.47) circle (0.05) ;

\filldraw [rotate=\i*120] (0:2.05) circle (0.05) ;

\draw [rotate=\i*120,very thick] (1*120+10:3) -- (-1.93,0.67)  ;
\filldraw [rotate=\i*120]  (-1.93,0.67) circle (0.05) ;
\draw [rotate=\i*120,very thick] (-1.93,0.67) .. controls +(0,-0.3) and +(0,0.2) .. (-1.15,0.47) ;
\draw [rotate=\i*120,very thick,fleche] (-1.93,-0.47)  -- (1*120+110:3) ; 
}

\def \k {2.3}
\draw [rounded corners=0.5cm, dotted] (0:\k) -- (120:\k) -- (240:\k) -- cycle;

\foreach \i in {1}  
{

\path (-1.93,0.67) .. controls +(0,-0.3) and +(0,0.2) .. (-1.15,0.47) node [pos=0.5, above] {$\delta_{1}$} ;
\draw [rotate=120] (-1,-1.3) node {$t_1$};
\draw  [dashed, very thick,rotate=\i*120]  (-1.93,-0.47) .. controls +(-70:1) and +(-135:0.3) ..   (-1,-1.9) 
.. controls +(45:0.3) and +(-50:1) .. (-1.93,-0.47)  ;

\draw [dashed, very thick,rotate=120]   (-1.93,-0.87)  
.. controls +(-70:1) and +(180:0.3) .. (-1,-2.3) 
.. controls +(0:0.5) and +(180:0.5) .. (0,-0.7) 
.. controls +(0:0.5) and +(90:0.5) .. (1.5,-1.5)
.. controls +(-90:0.5) and +(0:0.5) .. (0,-1) 
.. controls +(180:0.5) and +(0:0.5) .. (-1,-2.6) 
.. controls +(180:0.5) and +(-80:1) .. (-1.93,-0.87);
\draw [rotate=120] (-1.93,-0.87) circle (0.05) ;
}
\end{tikzpicture} 
}\\

$f(t_{1}) = f  \mathrm{cut}  f^3(\delta_{1})$
\end{center}
\end{minipage}

\bigskip
\enlargethispage{2cm}
\begin{minipage}{0.5\textwidth}
   \begin{center}
\fbox{
\begin{tikzpicture}[scale=1] 
\tikzstyle{fleche}=[>=latex,->]
\draw [thick] (0,0) circle (3cm) ;
\foreach \i in {1,2,3}  
{\draw  (\i*120+10:3) -- (\i*120+110:3)  ;
\draw (\i*120+10:3.4) node {$\alpha_{\i}$};
\draw  (\i*120+230:3.4) node {$\omega_{\i}$};
\draw [dotted,rotate=\i*120-120]   (43+120:1.6) ellipse (0.6 and 0.1);
\draw [dotted,rotate=\i*120-120]   (-43-120:1.6) ellipse (0.6 and 0.1);

\filldraw [rotate=\i*120] (-1.15,0.47) circle (0.05) ;
\draw [rotate=\i*120] (-1.93,0.47) circle (0.05) ;

\draw [rotate=\i*120] (-1.15,-0.47) circle (0.05) ;
\filldraw [rotate=\i*120] (-1.93,-0.47) circle (0.05) ;

\filldraw [rotate=\i*120] (0:2.05) circle (0.05) ;

\draw [rotate=\i*120,very thick] (1*120+10:3) -- (-1.93,0.67)  ;
\filldraw [rotate=\i*120]  (-1.93,0.67) circle (0.05) ;
\draw [rotate=\i*120,very thick] (-1.93,0.67) .. controls +(0,-0.3) and +(0,0.2) .. (-1.15,0.47) ;
\draw [rotate=\i*120,very thick,fleche] (-1.93,-0.47)  -- (1*120+110:3) ; 
}

\def \k {2.3}
\draw [rounded corners=0.5cm, dotted] (0:\k) -- (120:\k) -- (240:\k) -- cycle;

\foreach \i in {1}  
{

\path (-1.93,0.67) .. controls +(0,-0.3) and +(0,0.2) .. (-1.15,0.47) node [pos=0.5, above] {$\delta_{1}$} ;
\draw [rotate=120] (0,-0.5) node {$t'_{1}$};
\draw  [dashed, very thick,rotate=120]  (-1.93,-0.47)  
.. controls +(-80:1) and +(180:0.3) .. (-1,-2.4) 
.. controls +(0:0.5) and +(180:0.5) .. (0,-0.7) 
.. controls +(0:0.5) and +(180:0.3) .. (1.35,-1.43);
}
\end{tikzpicture} 
}\\

$\mathrm{cut}  f(t_{1}) = t'_{1}+ (-t'_{1})$
\end{center}
\end{minipage}
\begin{minipage}{0.5\textwidth}
\begin{center}
\fbox{
\begin{tikzpicture}[scale=1] 
\tikzstyle{fleche}=[>=latex,->]
\draw [thick] (0,0) circle (3cm) ;
\foreach \i in {1,2,3}  
{\draw  (\i*120+10:3) -- (\i*120+110:3)  ;
\draw (\i*120+10:3.4) node {$\alpha_{\i}$};
\draw  (\i*120+230:3.4) node {$\omega_{\i}$};
\draw [dotted,rotate=\i*120-120]   (43+120:1.6) ellipse (0.6 and 0.1);
\draw [dotted,rotate=\i*120-120]   (-43-120:1.6) ellipse (0.6 and 0.1);

\filldraw [rotate=\i*120] (-1.15,0.47) circle (0.05) ;
\draw [rotate=\i*120] (-1.93,0.47) circle (0.05) ;

\draw [rotate=\i*120] (-1.15,-0.47) circle (0.05) ;
\filldraw [rotate=\i*120] (-1.93,-0.47) circle (0.05) ;

\filldraw [rotate=\i*120] (0:2.05) circle (0.05) ;

\draw [rotate=\i*120,very thick] (1*120+10:3) -- (-1.93,0.67)  ;
\filldraw [rotate=\i*120]  (-1.93,0.67) circle (0.05) ;
\draw [rotate=\i*120,very thick] (-1.93,0.67) .. controls +(0,-0.3) and +(0,0.2) .. (-1.15,0.47) ;
\draw [rotate=\i*120,very thick,fleche] (-1.93,-0.47)  -- (1*120+110:3) ; 
}

\def \k {2.3}
\draw [rounded corners=0.5cm, dotted] (0:\k) -- (120:\k) -- (240:\k) -- cycle;

\foreach \i in {1}  
{
\path (-1.93,0.67) .. controls +(0,-0.3) and +(0,0.2) .. (-1.15,0.47) node [pos=0.5, above] {$\delta_{1}$} ;
\draw [rotate=120,dashed, very thick]   (-1.93,-0.87) 
 .. controls +(-70:1) and +(180:0.3) .. (-1,-2.7) 
.. controls +(0:0.5) and +(180:0.5) .. (0,-0.9) 
.. controls +(0:0.5) and +(180:0.3) .. (1.4,-1.7)
 .. controls +(0:0.3) and    +(180:0.5) ..  (2.2,0.3) 
.. controls +(0:0.5) and  +(45:0.5) ..  (1.72,-1.22) ;
\draw [rotate=120]  (1.72,-1.22) circle (0.05) ; 

\draw [rotate=120] (-1.93,-0.87) circle (0.05) ;

}
\end{tikzpicture} 
}\\

$f(t'_{1})$
\end{center}
\end{minipage}

\bigskip

\begin{minipage}{0.5\textwidth}
   \begin{center}
\fbox{
\begin{tikzpicture}[scale=1] 
\tikzstyle{fleche}=[>=latex,->]
\draw [thick] (0,0) circle (3cm) ;
\foreach \i in {1,2,3}  
{\draw  (\i*120+10:3) -- (\i*120+110:3)  ;
\draw (\i*120+10:3.4) node {$\alpha_{\i}$};
\draw  (\i*120+230:3.4) node {$\omega_{\i}$};
\draw [dotted,rotate=\i*120-120]   (43+120:1.6) ellipse (0.6 and 0.1);
\draw [dotted,rotate=\i*120-120]   (-43-120:1.6) ellipse (0.6 and 0.1);

\filldraw [rotate=\i*120] (-1.15,0.47) circle (0.05) ;
\draw [rotate=\i*120] (-1.93,0.47) circle (0.05) ;

\draw [rotate=\i*120] (-1.15,-0.47) circle (0.05) ;
\filldraw [rotate=\i*120] (-1.93,-0.47) circle (0.05) ;

\filldraw [rotate=\i*120] (0:2.05) circle (0.05) ;

\draw [rotate=\i*120,very thick] (1*120+10:3) -- (-1.93,0.67)  ;
\filldraw [rotate=\i*120]  (-1.93,0.67) circle (0.05) ;
\draw [rotate=\i*120,very thick] (-1.93,0.67) .. controls +(0,-0.3) and +(0,0.2) .. (-1.15,0.47) ;
\draw [rotate=\i*120,very thick,fleche] (-1.93,-0.47)  -- (1*120+110:3) ; 
}

\def \k {2.3}
\draw [rounded corners=0.5cm, dotted] (0:\k) -- (120:\k) -- (240:\k) -- cycle;

\foreach \i in {1}  
{

\path (-1.93,0.67) .. controls +(0,-0.3) and +(0,0.2) .. (-1.15,0.47) node [pos=0.5, above] {$\delta_{1}$} ;

\draw [rotate=120] (0,-0.5) node {$t'_{1}$};
\draw  [dashed, very thick,rotate=120]  (-1.93,-0.47)  
.. controls +(-80:1) and +(180:0.3) .. (-1,-2.4) 
.. controls +(0:0.5) and +(180:0.5) .. (0,-0.7) 
.. controls +(0:0.5) and +(180:0.3) .. (1.35,-1.43);
\draw [rotate=240] (-1,-1.3) node {$t_2$};
\draw  [dashed, very thick,rotate=2400]  (-1.93,-0.47) .. controls +(-70:1) and +(-135:0.3) ..   (-1,-1.9) 
.. controls +(45:0.3) and +(-50:1) .. (-1.93,-0.47)  ;
}
\end{tikzpicture} 
}\\

$\mathrm{cut}  f(t'_{1}) = t'_{1}+t_{2}$\\~
\end{center}
\end{minipage}
\begin{minipage}{0.5\textwidth}
\begin{center}
\fbox{
\begin{tikzpicture}[scale=1] 
\tikzstyle{fleche}=[>=latex,->]
\draw [thick] (0,0) circle (3cm) ;

\foreach \i in {1,2,3}  
{\draw  (\i*120+10:3) -- (\i*120+110:3)  ;
\draw (\i*120+10:3.4) node {$\alpha_{\i}$};
\draw  (\i*120+230:3.4) node {$\omega_{\i}$};
\draw [dotted,rotate=\i*120-120]   (43+120:1.6) ellipse (0.6 and 0.1);
\draw [dotted,rotate=\i*120-120]   (-43-120:1.6) ellipse (0.6 and 0.1);
\filldraw [rotate=\i*120] (-1.15,0.47) circle (0.05) ;
\draw [rotate=\i*120] (-1.93,0.47) circle (0.05) ;

\draw [rotate=\i*120] (-1.15,-0.47) circle (0.05) ;
\filldraw [rotate=\i*120] (-1.93,-0.47) circle (0.05) ;

\filldraw [rotate=\i*120] (0:2.05) circle (0.05) ;

\draw [rotate=\i*120,very thick] (1*120+10:3) -- (-1.93,0.67)  ;
\filldraw [rotate=\i*120]  (-1.93,0.67) circle (0.05) ;
\draw [rotate=\i*120,very thick] (-1.93,0.67) .. controls +(0,-0.3) and +(0,0.2) .. (-1.15,0.47);
\draw [rotate=\i*120,very thick,fleche] (-1.93,-0.47)  -- (1*120+110:3) ; 
}

\def \k {2.3}
\draw [rounded corners=0.5cm, dotted] (0:\k) -- (120:\k) -- (240:\k) -- cycle;

\foreach \i in {1,2,3}
{

\draw  [dashed, very thick,rotate=\i*120]  (-1.93,-0.47) .. controls +(-70:1) and +(-135:0.3) ..   (-1,-1.9) 
.. controls +(45:0.3) and +(-50:1) .. (-1.93,-0.47) ;
\draw  [dashed, very thick,rotate=\i*120]  (-1.93,-0.47)  
.. controls +(-80:1) and +(180:0.3) .. (-1,-2.4) 
.. controls +(0:0.5) and +(180:0.5) .. (0,-0.7) 
.. controls +(0:0.5) and +(180:0.3) .. (1.35,-1.43);
\draw [rotate=\i*120] (-1,-1.3) node {$t_{\i}$};
\draw [rotate=\i*120] (0,-0.5) node {$t'_{\i}$};
}

\end{tikzpicture} 
}\\

The whole family: $T = \{\pm t_{1}, \pm t'_{1}, \pm t_{2}, \pm t'_{2}, \pm t_{3}, \pm t'_{3}\}$

\end{center}
\end{minipage}

%
\bigskip

\bibliographystyle{alpha}
\bibliography{bibliographie}

\end{document}